\newtheorem{theorem}{Theorem}
\newtheorem{lemma}[theorem]{Lemma}
\newtheorem{proposition}[theorem]{Proposition}
\newtheorem{corollary}[theorem]{Corollary}
\newtheorem{definition}[theorem]{Definition}
\newtheorem{remark}[theorem]{Remark}
\renewcommand{\paragraph}[1]{\medskip\noindent\textbf{#1}}
\newcommand{\ommit}[1]{
}
\newcommand{\R}{\mathbb{R}}
\newcommand{\C}{\mathbb{C}}
\newcommand{\Z}{\mathbb{Z}}
\newcommand{\RP}{\mathbb{R}\textup{P}}
\newcommand{\inc}[1]{\operatorname{inc}[#1]}
\newcommand{\crr}{\operatorname{cr}}
\renewcommand{\phi}{\varphi}
\renewcommand{\epsilon}{\varepsilon}
\newcommand{\longto}{\longrightarrow}
\newcommand{\inv}[1]{\mathcal{C}^{#1}}
\newenvironment{titleproof}[1]{\noindent{\bf #1.}}{\hfill\qed\par\medskip}
\title[On Weingarten transformations of hyperbolic nets]{
	On Weingarten transformations of hyperbolic nets
}
\author{Emanuel Huhnen-Venedey \and Wolfgang K. Schief} %
\address{Emanuel Huhnen-Venedey\vspace{-5pt}}
\address{Institute of Mathematics, Secr. MA 8-3, TU Berlin, 10623 Berlin, Germany}%
\email{huhnen@math.tu-berlin.de}
\address{Wolfgang K. Schief\vspace{-5pt}} 
\address{School of Mathematics and Statistics, UNSW, NSW 2052, Sydney, Australia}
\email{w.schief@unsw.edu.au}
\thanks{The research of the first author was supported by the DFG
Collaborative Research Center TRR 109, “Discretization in Geometry and
Dynamics.” The first author also gratefully acknowledges the hospitality of the
School of Mathematics and Statistics at the University of New South Wales in
Sydney and the support of the German Academic Exchange Service (DAAD) during
his time as a Visiting Fellow at UNSW}
\keywords{discrete differential geometry, discrete asymptotic line
  parametrization, A-nets, hyperboloids, Weingarten transformation}
\subjclass[2010]{53A05 37K10 37K25 51M30}
\begin{document}

\begin{abstract}
Weingarten transformations which, by definition, preserve the asymptotic lines
on smooth surfaces have been studied extensively in classical differential
geometry and also play an important role in connection with the modern
geometric theory of integrable systems. Their natural discrete analogues have
been investigated in great detail in the area of (integrable) discrete
differential geometry and can be traced back at least to the early 1950s. Here,
we propose a canonical analogue of (discrete) Weingarten transformations for
hyperbolic nets, that is, $C^1$-surfaces which constitute hybrids of smooth and
discrete surfaces ``parametrized'' in terms of asymptotic coordinates. We prove
the existence of Weingarten pairs and analyse their geometric and algebraic
properties. 
\end{abstract}

\maketitle

\section{Introduction}
\label{sec:intro}

The subject of the present paper is the determination and analysis of a canoncial class of
transformations associated with so-called hyperbolic nets.  The latter have
been introduced recently in \cite{Huhnen-VenedeyRoerig:2013:hyperbolicNets} and
constitute a discretization of surfaces in 3-space that are parametrized along
asymptotic lines.  A parametrization of a surface is called an asymptotic line
parametrization if, at each point of the surface, parameter lines follow the
distinguished directions of vanishing normal curvature. For smooth
surfaces, one has unique asymptotic line parametrizations (up to
reparametrization of parameter lines) around hyperbolic points, that is, around
points of negative Gaussian curvature \cite{Eisenhart:1960:Treatise}.  It is natural to discretize
parametrized surfaces by quadrilateral nets, also called quadrilateral meshes.
Compared with, e.g., discrete triangulated surfaces, quadrilateral nets do not
only discretize continuous surfaces understood as topological objects (point sets),
but also reflect the combinatorial structure of parameter lines. 
While unspecified quadrilateral nets discretize arbitrary parametrizations, the
discretization of distinguished types of parametrizations yields quadrilateral
nets with special geometric properties.  One of the most fundamental examples
is the discretization of conjugate parametrizations by quadrilateral nets with
planar faces. Discretizing more specific conjugate parametrizations then yields
planar quadrilateral nets with additional properties. However, as asymptotic
line parametrizations are not conjugate parametrizations, they are not modelled
by quadrilateral nets with planar faces. Instead, asymptotic line
parametrizations are properly discretized by quadrilateral nets with planar
vertex stars, that is, nets for which every vertex is coplanar with its nearest
neighbours.  We use the terminology of \cite{BobenkoSuris:2008:DDGBook}, calling
discrete nets with planar quadrilaterals \emph{Q-nets} and (skew) quadrilateral
nets with planar vertex stars \emph{A-nets}. Q-nets and A-nets as
discretizations of conjugate and asymptotic line parametrizations were already
introduced in~\cite{Sauer:1937:ProjLinienGeometrie}.

Various aspects of continuous asymptotic line parametrizations have been
discretized using A-nets.  For example, the discretization of surfaces of
constant negative Gaussian curvature as special A-nets, nowadays often called
K-surfaces, can be found in
\cite{Sauer:1950:Pseudosphaeren,Wunderlich:1951:K-Flaechen}. In the context of
the connections between geometry and integrability, the relation between
discrete K-surfaces and Hirota's \cite{Hirota:1977:DiscreteSineGordon}
algebraic discretization of the sine-Gordon equation was established much later
\cite{BobenkoPinkall:1996:DiscreteKandHirota}.  For a special instance of this
relation, see, for example, \cite{Hoffmann:1999:DiscreteAmsler} on discrete
Amsler-surfaces.  Discrete indefinite affine spheres
\cite{BobenkoSchief:1999:AffineSpheresIndefinite} are an example for the
discretization of a certain class of smooth A-nets within affine differential
geometry.  The discrete Lelieuvre representation of A-nets and the related
discrete Moutard equations are, for instance, treated in
\cite{Schief:1997:MoutardSuperposition,BobenkoSchief:1999:AffineSpheresDuality,
KonopelchenkoPinkall:2000:ProjectiveLelieuvre,Doliwa:2001:ANetsPluecker,
DoliwaNieszporskiSantini:2001:IntegrableReductionOfAnets,
Nieszporski:2002:DiscreteANets}.

Based on the discretization of asymptotic line parametrizations by A-nets,
\emph{hyperbolic nets} arise as an extension of A-nets in the sense that
elementary quadrilaterals of A-nets become extended to hyperbolic surface
patches.  More precisely, a hyperbolic net is a piecewise smooth surface
composed of \emph{hyperboloid patches}, where the latter refers to surface
patches that are taken from doubly ruled quadrics, i.e., one-sheeted
hyperboloids and hyperbolic paraboloids, by ``cutting along asymptotic lines''.
In order to obtain a hyperbolic net, hyperboloid patches are inserted into the
skew quadrilaterals of a supporting discrete A-surface such that the tangent
planes of edge-adjacent patches coincide along the common boundary
edge\footnote{ This is analogous to the discretization of curvature line
parametrized surfaces by cyclidic nets
\cite{BobenkoHuhnen-Venedey:2011:cyclidicNets}. A cyclidic net is composed of
surface patches that are taken from Dupin cyclides by ``cutting along curvature
lines'' and then glued along those cuts in a continuously differentiable way.}
(cf. Fig.~\ref{fig:equi_pringle_hypnet_intro}). Hence, hyperbolic nets are
$C^1$-surfaces which may be regarded as ``hybrids'' of smooth surfaces
parametrized in terms of asymptotic coordinates and their discrete
counterparts.

\begin{figure}[htb]
\begin{center}
\includegraphics[scale=.14]{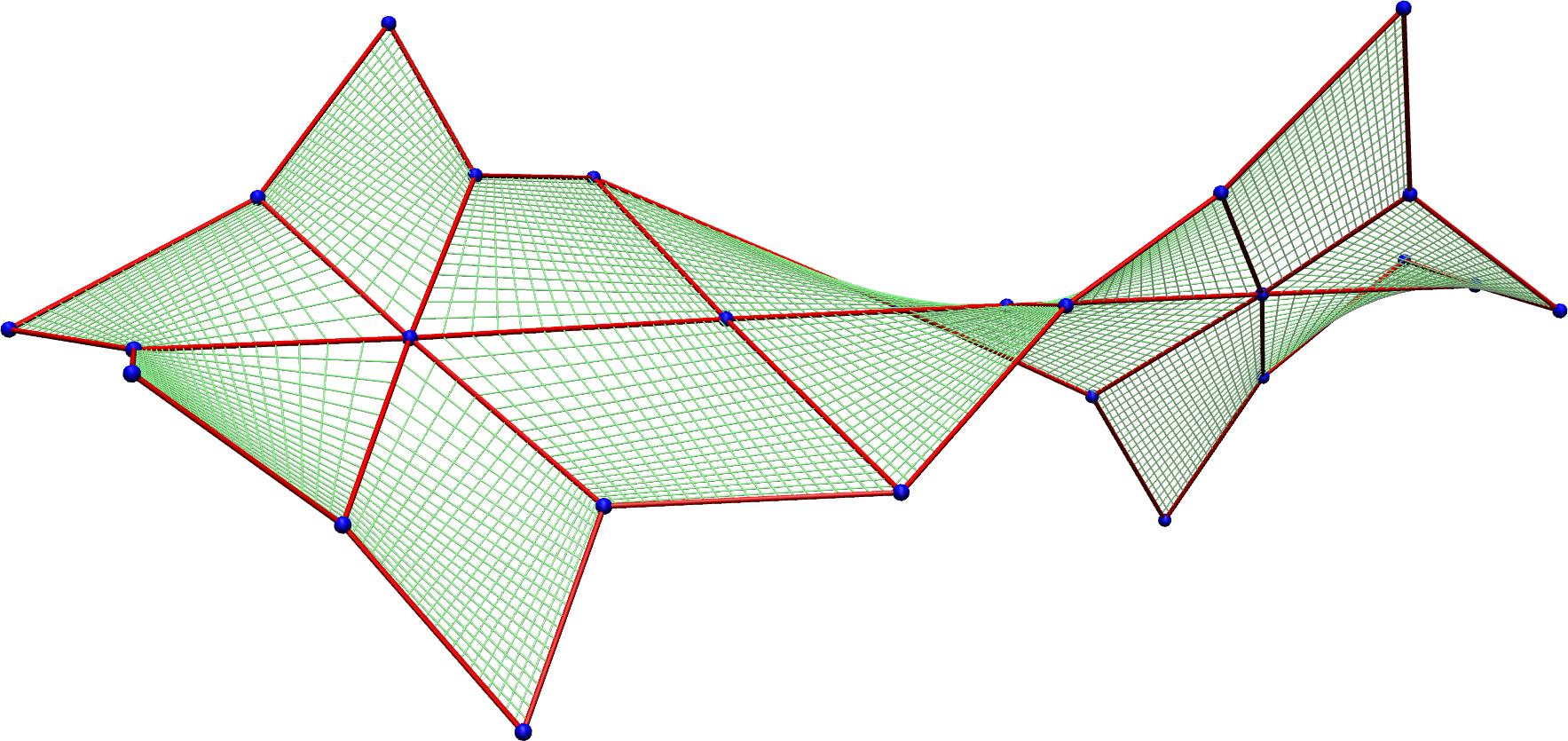}
\end{center}
\caption{A hyperbolic net. The red segments are the edges
of the supporting discrete A-surface, bounding the individual hyperboloid patches.}
\label{fig:equi_pringle_hypnet_intro}
\end{figure}

A specific subclass of hyperbolic nets, that is, hyperbolic nets that comprise
only surface patches taken from hyperbolic paraboloids, have already appeared
implicitly as discrete affine minimal surfaces in
\cite{Craizer:2010:AffineMinimalSurfaces}.  This relation is discussed in
detail in \cite{KaeferboeckPottmann:2012:DiscreteAffineMinimal}.  Aiming at the
application in the context of architectural geometry, a parametric description
of hyperbolic nets  in terms of rational bilinear patches has been given
recently in \cite{ShiWangPottmann:2013:RationalBilinearPatches}, wherein also
the approximation of a given negatively curved surface by hyperbolic nets is
investigated. The rational bilinear description is closely related to the elementary
geometric characterization of hyperbolic nets on which we rely in the present
paper.  While hyperbolic nets were introduced originally in the more abstract
setting of Pl\"ucker line geometry, the elementary description we use here
is formulated in terms of \emph{crisscrossed quadrilaterals}.  The latter are skew
quadrilaterals that are equipped with a pair of crossing lines, which uniquely
describes the extension of the supporting quadrilateral to a hyperboloid patch
that is bounded by the quadrilateral (cf.
Fig.~\ref{fig:crisscross_quad_extension}). Indeed, a crisscrossed quadrilateral
is a natural representative of a rational bilinear patch, the latter being a
rational bilinear parametrization of a hyperboloid patch over $[0,1]^2$ such
that the crossing line segments are the $\tfrac 1 2$-parameter lines.
Hyperbolic nets are then described as \emph{crisscrossed A-surfaces}, for which
crosses associated with edge-adjacent quadrilaterals have to satisfy an
incidence relation which guarantees that the corresponding hyperboloid patches
join smoothly along the common boundary edge.

\begin{figure}[htb]
\begin{center}
\parbox{.25\textwidth}{\includegraphics[width=\linewidth]{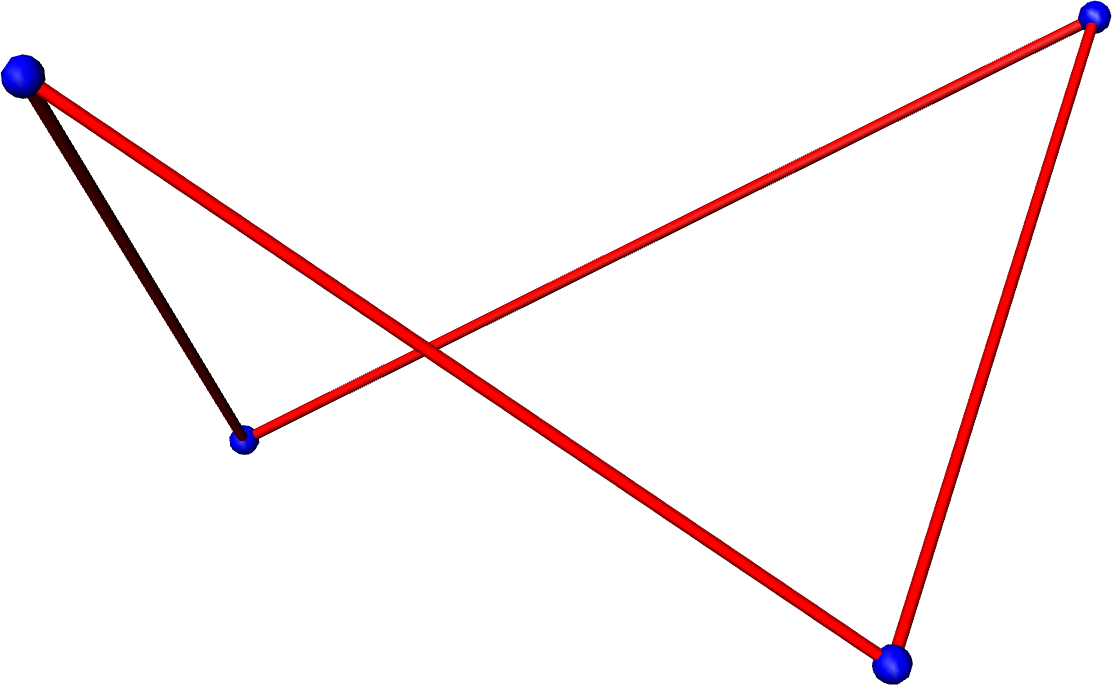}}
\hspace{.05\linewidth}
\parbox{.25\textwidth}{\includegraphics[width=\linewidth]{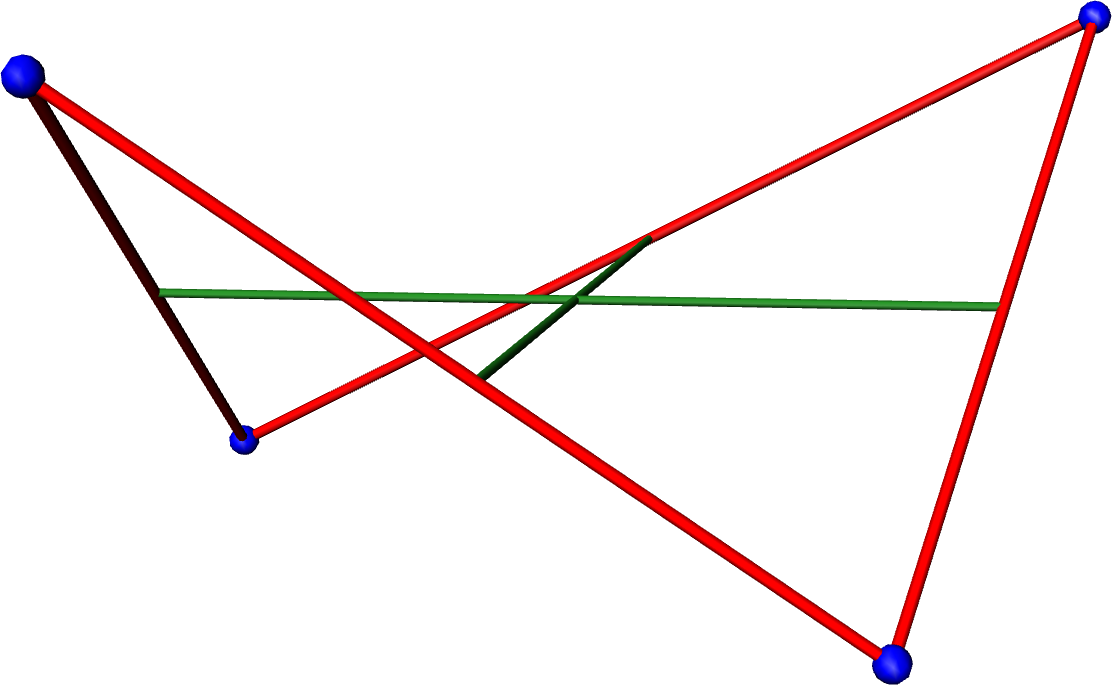}}
\hspace{.05\linewidth}
\parbox{.25\textwidth}{\includegraphics[width=\linewidth]{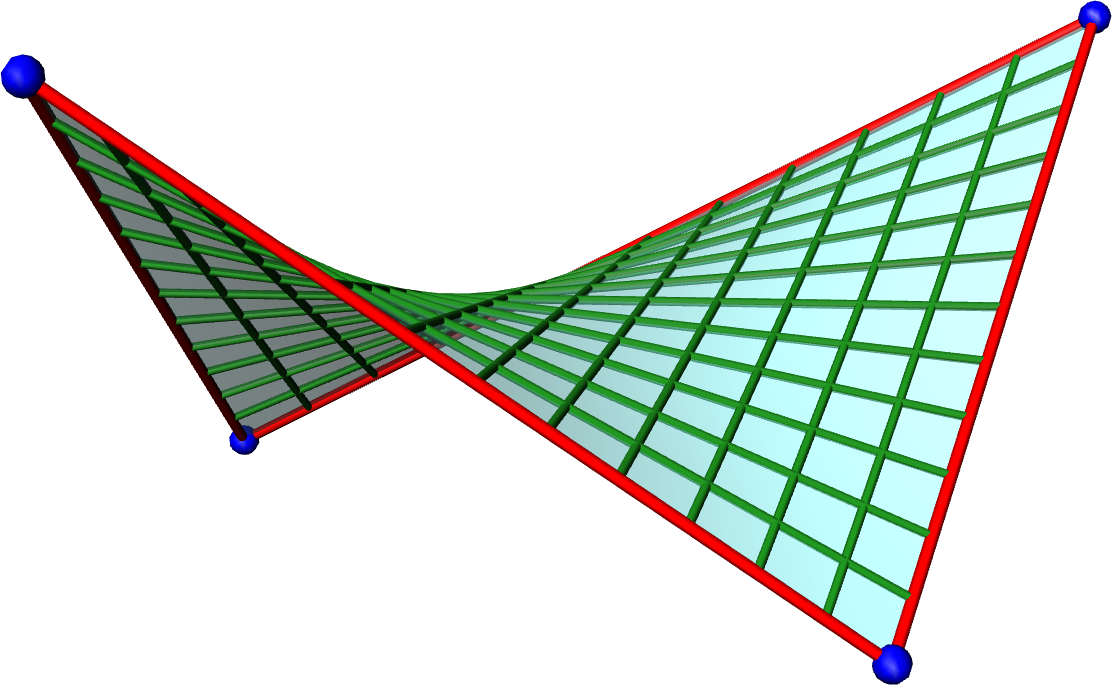}}
\end{center}
\caption{For a crisscrossed quadrilateral, there exists a unique doubly ruled quadric that contains the
two crossing lines and the four edges of the supporting quadrilateral.}
\label{fig:crisscross_quad_extension}
\end{figure}

Going beyond the discretization of individual surfaces, another issue is the
discretization of the class of transformations that is associated with
classical A-surfaces.  In general, the class of associated transformations and
related permutability theorems are an essential aspect of specific ``integrable''
surface parametrizations, that is, surface parametrizations which admit underlying
integrable structure \cite{SchiefRogers:2002:BaecklundDarboux}.  
In the case of A-nets, the associated
transformations are called \emph{Weingarten transformations} 
\cite{Eisenhart:1960:Treatise,SchiefRogers:2002:BaecklundDarboux}.  Two continuous
surfaces parametrized along asymptotic lines over the same domain are said to be Weingarten
transforms of each other if the line connecting corresponding points is the
intersection of the tangent planes to the two surfaces at these points.  This
relation carries over naturally to the setting of discrete A-nets in the
following way (see, e.g.,
\cite{Doliwa:2001:ANetsPluecker,DoliwaNieszporskiSantini:2001:IntegrableReductionOfAnets,
Nieszporski:2002:DiscreteANets}).
For a discrete A-surface $f$ and a vertex $x$ of $f$, the plane containing the
vertex star of $x$ is conveniently understood as the tangent plane to $f$ at
$x$.  Now let $\tilde f$ be another discrete A-surface with the same combinatorics
as $f$.  The A-surfaces $f$ and $\tilde f$ are said to form a discrete
Weingarten pair if the line connecting corresponding vertices $x$ and $\tilde
x$ is the intersection of the corresponding discrete tangent planes.
Equivalently, this relation may be described as follows.  Connecting
corresponding vertices of $f$ and $\tilde f$, one obtains a 3-dimensional
quadrilateral net $F$ that is composed of the two 2-dimensional layers $f$ and
$\tilde f$. The surfaces $f$ and $\tilde f$ form a discrete Weingarten pair if
and only if the net $F$ has planar vertex stars, which means that $F$ is a
3-dimensional A-net itself.  This illustrates a well established discretization
principle within discrete differential geometry, i.e., on the discrete
level, surfaces and their transformations should be described by the same
geometric or algebraic conditions. This approach reflects a deep and unifying
understanding of the classical relations between parametrized surfaces and
their transformations in the context of discrete integrability (see, e.g.,
\cite{BobenkoSuris:2008:DDGBook}). It is worth mentioning that, analogous to
the classical theory, two A-nets are discrete Weingarten transforms of each
other if and only if their discrete Lelieuvre normals are related by a discrete Moutard
transformation  (see, e.g., \cite{Doliwa:2001:ANetsPluecker,
DoliwaNieszporskiSantini:2001:IntegrableReductionOfAnets,
Nieszporski:2002:DiscreteANets}).

The aim of the present article is to develop, in the context of hyperbolic nets, 
a canonical analogue of the classical and modern theories of Weingarten 
transformations for smooth and discrete A-surfaces respectively. Since hyperbolic
nets possess the key features of both smooth and discrete A-surfaces, it is natural 
to demand that the same be true for their transformations. Thus, we here propose 
that two hyperbolic nets form a Weingarten pair if the
supporting A-surfaces form a discrete Weingarten pair and, additionally, the hyperboloid
patches associated with corresponding quadrilaterals are related by a classical
Weingarten transformation. It turns out that this definition is indeed admissible but that
the proof of this assertion is significantly more involved than the proof of the existence 
of both classical and discrete Weingarten transformations. Accordingly, we here confine
ourselves to the investigation of single applications of Weingarten transformations and
address the permutability properties (Bianchi diagram)
of Weingarten transformations of hyperbolic nets and related aspects in a separate
publication.

\paragraph{Structure and results of the present paper.} We begin in
Section~\ref{sec:anets} with an overview of the aspects of the theory of
discrete A-surfaces and their transformations which are relevant for our
purposes.  Subsequently, in Section~\ref{sec:hypnets_as_crisscrossed_anets},
the description of hyperbolic nets recorded in
\cite{Huhnen-VenedeyRoerig:2013:hyperbolicNets} is briefly reviewed and
reformulated in terms of crisscrossed quadrilaterals.  Particular attention is
given to a scalar function $\rho$ defined at the vertices of a supporting A-net
that describes crosses adapted to quadrilaterals of the support structure. In
the case that the crosses encapsulate a hyperbolic net, the relation between
this function $\rho$ and algebraic invariants composed of the discrete Moutard
coefficients of the A-net is revealed.\footnote{It turns out that the scalars
$\rho$ are exactly the weights used in the rational bilinear patch description
of hyperbolic nets of \cite{ShiWangPottmann:2013:RationalBilinearPatches}.} In
Section~\ref{sec:weingarten_trafos}, we develop the concept of Weingarten
transformations of hyperbolic nets which constitute a subclass of more general
B\"acklund transformations, the anlogues of which do not exist in the classical
and discrete cases.  We show how these may be characterized both geometrically
in terms of crosses and algebraically in terms of the function $\rho$. It
turns out that in the case of the generic B\"acklund transformation, the latter
key function is governed by a non-autonomous version of the master discrete BKP
(Miwa) equation of integrable systems theory
\cite{Miwa:1982:HirotaDifferenceEquations}.  Moreover, it is demonstrated that,
in the particular case of a Weingarten pair, $\rho$ may be identified with a
potential for a particular choice of Moutard coefficients of a Lelieuvre
representation associated with  the underlying 2-layer 3D A-net. Accordingly,
the above-mentioned BKP-type equation reduces to the standard discrete BKP
equation which, in turn, gives rise to a novel geometric interpretation of
Miwa's fundamental equation.

It is important to note that a discrete
A-surface may be extended to a hyperbolic net if and only if a certain condition on the
twist of quadrilateral strips is satisfied. However, any A-surface with 
$\Z^2$ combinatorics is extendable in an analogous sense if the elementary 
quadrilaterals are equipped with whole hyperboloids rather than hyperboloid
patches. Such nets, which still obey the tangency condition along edges, are 
termed {\em pre-hyperbolic nets}. Accordingly, our general approach is to introduce first
B\"acklund and Weingarten transformations for pre-hyperbolic nets and then derive the theory
for hyperbolic nets by taking into account the additional constraint on the quadrilateral strips. 
Here, the global existence of Weingarten pairs is proven by converting this constraint into a
condition on the aforementioned algebraic invariants. 

\section{Discrete A-nets}
\label{sec:anets}

In the following, we introduce the notion of discrete A-nets 
\cite{Sauer:1937:ProjLinienGeometrie,BobenkoSuris:2008:DDGBook} and summarize
different aspects of the related theory that are important for our purpose. We
start with the 2-dimensional case, i.e., discrete A-surfaces, and then move on
to the higher-dimensional case, which is conveniently understood as the
(integrable) theory of discrete A-surfaces and their associated
transformations.  This approach provides us with a structure which will be
used as a guide when developing the analogous theory of hyperbolic nets and their
transformations.

\paragraph{Notation.}
For a discrete map defined on $\Z^m$,
it is convenient to represent shifts in lattice directions by lower indices.
Accordingly, for $z = (z_1,\dots,z_m) \in \Z^m$ and a map $\phi$ on $\Z^m$, we
write
\begin{equation*}
\phi_1(z) := \phi(z_1+1,z_2,\dots,z_m), \quad
\phi_{11}(z) := \phi(z_1+2,z_2,\dots,z_m), 
\end{equation*}
\begin{equation*}
\phi_2(z) := \phi(z_1,z_2+1,z_3,\dots,z_m), \quad \text{etc.}
\end{equation*}
Usually, we omit the argument for discrete maps and write
\begin{equation*}
  \phi = \phi (z), \quad \phi_1 = \phi_1 (z) \quad \text{etc.}
\end{equation*}
For $k \in \left\{ 1,\dots,m \right\}$ denote by $\mathcal{S}^{i_1 \dots i_k}$
the $k$-dimensional subspace of $\Z^m$ that is spanned by directions
$i_1,\dots,i_k$,
\begin{equation*}
\mathcal{S}^{i_1 \dots i_k} = \operatorname{span}_\Z(e_{i_1},\dots,e_{i_k}),
\end{equation*}
where $e_i$ is the $i$-th unit vector in $\Z^m$.
Finally, for $x_1,\dots,x_n \in \R^m$ we denote by
\begin{equation*}
\inc{x_1,\dots,x_n} = \left\{ \sum_{i=1}^n \alpha_i x_i \mid \sum_{i=1}^n \alpha_i = 1 \right\}
\end{equation*}
the affine subspace spanned by $x_1,\dots,x_n$.

\subsection{Discrete A-surfaces.}
\label{subsec:a-surfaces}

\begin{definition}[Discrete A-surface]
 A map $x: \Z^2 \to \R^3$ is a called a \emph{2-dimensional discrete A-net} or
 \emph{discrete A-surface} if for each $z \in \Z^2$ the point $x(z)$ is
 coplanar with all its neighbours.  The points $x(\Z^2)$ are synonymously
 called \emph{lattice points} or \emph{vertices} of $x$.  The line segments
 connecting adjacent lattice points $x(z)$ and $x(\tilde z)$ are called
 \emph{edges} of $x$.  A vertex together with all its neighbours is called a
 \emph{vertex star} and we call a plane supporting a vertex star a \emph{vertex
 plane}.
\label{def:anet_2d}
\end{definition}

\begin{remark}
In order to describe an A-surface $x$ with more general combinatorics than
$\Z^2$, one uses quad-graphs, i.e., strongly regular cell decompositions of
topological surfaces with all 2-cells being quadrilaterals, as domain for $x$.
\end{remark}

\paragraph{Genericity assumption.}
We assume that the A-nets are generic, i.e., elementary quadrilaterals are skew
and each vertex star defines a unique vertex plane.

\paragraph{Lelieuvre representation of A-surfaces.}
Let $m$ be any normal field to the vertex planes of a discrete A-surface $x :
\Z^2 \to \R^3$.  Planarity of vertex stars implies that the edges of $x$ can be
described as
\begin{equation}
x_i - x = \alpha^i \ m_i \times m, \quad i = 1,2.
\label{eq:pre_lelieuvre}
\end{equation}
The compatibility condition of \eqref{eq:pre_lelieuvre} implies that
\begin{equation*}
\alpha^1 \alpha^1_2 = \alpha^2 \alpha^2_1,
\end{equation*}
which guarantees the existence of a potential $\xi$ such that
\begin{equation*}
\alpha^i = \xi_i \xi.
\end{equation*}
Introducing $n = \xi m$, the description \eqref{eq:pre_lelieuvre} of edges
simplifies to
\begin{equation}
x_i - x = n_i \times n, \quad i = 1,2.
\label{eq:lel_normal_2d}
\end{equation}
The map $n : \Z^2 \to \R^3$ is called a \emph{discrete
Lelieuvre normal field} and \eqref{eq:lel_normal_2d} are called \emph{discrete
Lelieuvre formulae}.  The corresponding simplified compatibiliy condition is the
discrete Moutard-type equation
\begin{equation}
n_{12} - n = a^{12} (n_2 - n_1),
\label{eq:moutard_minus}
\end{equation}
with scalars $a^{12}$ that are called (discrete) \emph{Moutard coefficients} 
(see, e.g., \cite{Schief:1997:MoutardSuperposition,BobenkoSchief:1999:AffineSpheresDuality,
Doliwa:2001:ANetsPluecker,DoliwaNieszporskiSantini:2001:IntegrableReductionOfAnets,
Nieszporski:2002:DiscreteANets}).

Lelieuvre normals are unique up to black-white rescaling.  This
means that given a Lelieuvre representation $n$ of an A-net $x: \Z^2 \to \R^3$, one
can colour the vertices of $\Z^2$ black and white such that adjacent vertices
are of different colour and for arbitrary $\alpha \ne 0$ define
\begin{equation}
\tilde n =
\begin{cases}
\alpha n & \text{on black vertices}, \\
\frac{1}{\alpha} n & \text{on white vertices}.
\end{cases}
\label{eq:bw_rescaling}
\end{equation}
Then $\tilde n$ is another Lelieuvre representation of the same A-net $x$.
Solutions of the Moutard equation \eqref{eq:moutard_minus} are in one-to-one
correspondence with discrete A-nets modulo global translation of $x$ and black-white
rescaling of $n$.

In a fixed Lelieuvre representation $n$, there are four related Moutard coefficients
associated with each elementary quadrilateral. They correspond to different
equivalent reformulations of \eqref{eq:moutard_minus}. We identify those coefficients with
combinatorial pictures as shown in Fig.~\ref{fig:moutard_coeff}.

\begin{figure}[htb]
\begin{center}
 \input{ 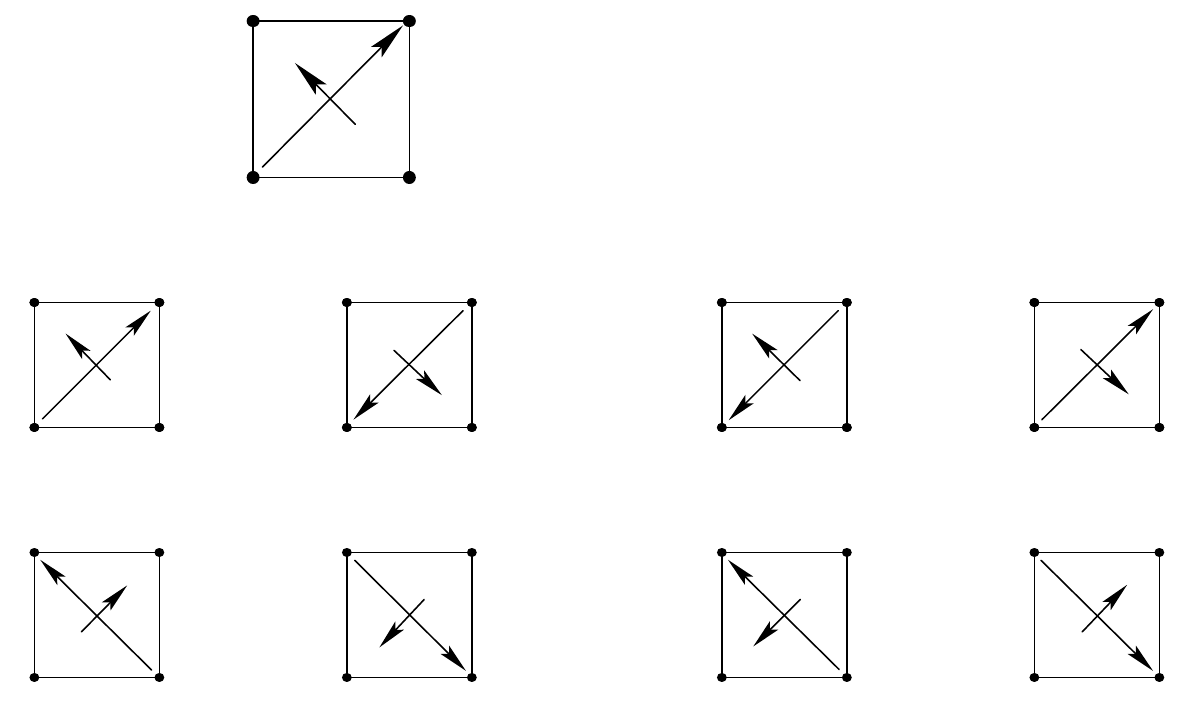_t } 
\end{center}
\caption{Identification of Moutard coefficients with combinatorial pictures.
The long oriented diagonal represents the vector which is written as a scalar
multiple of the short oriented diagonal. Changing the orientation of one vector
corresponds to a sign change of the Moutard coefficient, $a \to -a$.
Interchanging the long and the short diagonal results in the reciprocal value,
$a \to \frac{1}{a}$.}
\label{fig:moutard_coeff}
\end{figure}

\paragraph{Invariants associated with pairs of edge-adjacent quadrilaterals of an A-net.}
Changing the Lelieuvre representation of an A-net, i.e., performing a
black-white rescaling \eqref{eq:bw_rescaling} of a given Lelieuvre normal field
$n$, changes the Moutard coefficients as indicated in
Fig.~\ref{fig:moutard_rescaling}.  Note that the sign of the Moutard
coefficient is preserved.

\begin{figure}[htb]
\begin{center}
 \input{ 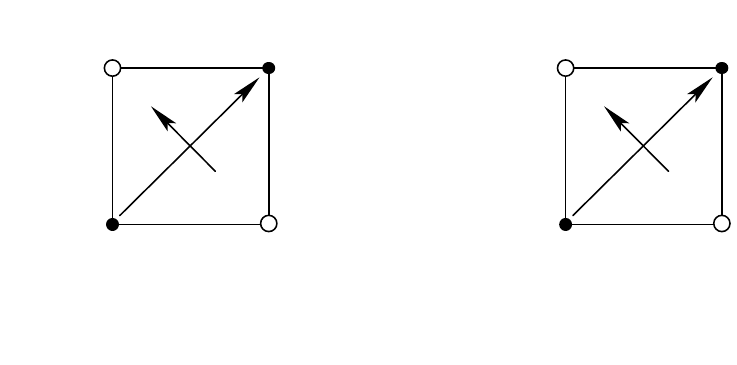_t } 
\end{center}
\caption{Rescaling of Moutard coefficients induced by a b/w-rescaling of Lelieuvre normals.}
\label{fig:moutard_rescaling}
\end{figure}

A Moutard coefficient becomes rescaled by $\alpha^2$ or $\frac{1}{\alpha^2}$,
depending on the type (black-black or white-white) of the associated long
diagonal.  This yields algebraic invariants associated with edge-adjacent
quadrilaterals of a discrete A-net as certain products of Moutard coefficients.
One type of invariant that turns out to be crucial for our purpose is
characterized by the following

\begin{definition}[Parallel invariants]
Let $a$ and $\tilde a$ be Moutard coefficients associated with edge-adjacent
quadrilaterals of an A-net. If, in the symbolic representation of
Fig.~\ref{fig:moutard_coeff}, the coefficients $a$ and $\tilde a$ are related
by a ``parallel transport'' then the algebraic invariant $a \tilde a$ of the A-net
is called a \emph{parallel invariant} (cf. Fig.~\ref{fig:moutard_invariants}).
\label{def:parallel_invariant}
\end{definition}

\begin{figure}[htb]
\begin{center}
 \input{ 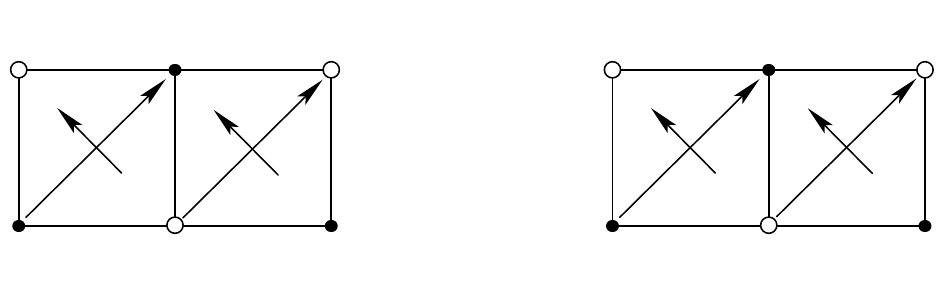_t } 
\end{center}
\caption{Products $a \tilde a$ of Moutard coefficients that are related by a
parallel transport are algebraic invariants associated with pairs of
edge-adjacent quadrilaterals of an A-net.}
\label{fig:moutard_invariants}
\end{figure}

\begin{remark}
Moutard coefficients $a^{12}$ and $a^{12}_i, i=1,2$ associated
with edge-adjacent quadrilaterals of an A-surface $x : \Z^2 \to \R^3$ yield
parallel invariants $a^{12}a^{12}_i$.
\label{rem:parallel_invariants_aij}
\end{remark}

\paragraph{Cauchy problem for A-surfaces.}
The Lelieuvre representation provides a very convenient
description of Cauchy problems for A-surfaces.  Admissible Cauchy data for an
A-surface $x : \Z^2 \to \R^3$ are, for example,
\begin{equation}
n (\mathcal{S}^i),\ i = 1,2;
\quad
a^{12} (\Z^2);
\quad
x_0.
\label{eq:cauchy_data_a_surface}
\end{equation}
Thus, Moutard coefficients $a^{12}$ may be prescribed for the whole surface. This allows to
determine the entire Lelieuvre normal field from initial values of $n$ along
the coordinate axes $\mathcal{S}^1$ and $\mathcal{S}^2$, using
\eqref{eq:moutard_minus}. Due to \eqref{eq:lel_normal_2d}, the A-surface $x$ 
is then determined up to translation so that only one vertex $x_0$ of $x$
is needed to complete the Cauchy data.

\paragraph{Continuum limit.}
According to the classical theory, a surface $x:\R^2 \to \R^3$ parametrized
along asymptotic lines can be described by its Lelieuvre normal $n:\R^2 \to \R^3$
as stated by the Lelieuvre formulae
\begin{equation}
\partial_1 x = \partial_1 n \times n,
\quad
\partial_2 x = n \times \partial_2 n.
\label{eq:lel_normal_2d_smooth}
\end{equation}
In the continuous case, the  Lelieuvre normal is unique up to sign and does not
allow a rescaling as in the discrete case. The compatibility condition of
\eqref{eq:lel_normal_2d_smooth} is the classical Moutard equation
\begin{equation}
\partial_1 \partial_2 n = q^{12} n.
\label{eq:moutard_smooth}
\end{equation}
To obtain \eqref{eq:lel_normal_2d_smooth},\eqref{eq:moutard_smooth}
as a continuum limit of \eqref{eq:lel_normal_2d},\eqref{eq:moutard_minus}, one
first has to change the orientation of the discrete Lelieuvre normals according to,
for example, $n \to (-1)^{z_2} n.$
This converts \eqref{eq:lel_normal_2d}
and \eqref{eq:moutard_minus} into
\begin{equation}
x_1 - x = n_1 \times n,
\quad
x_2 - x = n \times n_2,
\quad
n_{12} + n = a^{12} (n_2 + n_1),
\label{eq:lel_moutard_flipped}
\end{equation}
which leads to \eqref{eq:lel_normal_2d_smooth} and \eqref{eq:moutard_smooth}
by expressing equations \eqref{eq:lel_moutard_flipped} in terms of difference quotients 
and then taking the limit.
Indeed, it is easily verified that without suitable flipping of Lelieuvre normals,
the system \eqref{eq:lel_normal_2d} does not possess a simultaneous
continuum limit.

\subsection{Higher-dimensional A-nets}
\label{subsec:anets_md}

There are two philosophically different approaches to introducing
higher-dimensional A-nets.  The first approach generalizes the incidence
geometric structure, i.e., it generalizes the idea of planar vertex stars to an
$m$-dimensional lattice with $m \ge 3$.  The second approach emphasizes the
relation between A-surfaces and their transformations. Starting with the notion
of 2-dimensional discrete A-surfaces, one imposes planarity of vertex stars
only on 2-dimensional layers of an $m$-dimensional lattice.  A multidimensional
A-net is then understood as a family of A-surfaces which are interrelated
according to the same geometric property that characterizes the surfaces
themselves.

However, it is not difficult to see that the seemingly weaker condition of
planar vertex stars in every 2-dimensional layer is equivalent to planarity of
the whole vertex stars. Indeed, it is noted that, for a generic net, three
consecutive points along a discrete coordinate line are not collinear.
Therefore, three such vertices already span the vertex plane at the middle
vertex for all 2-dimensional sublattices that contain this coordinate line.
Applying this argument repeatedly, one finds that, at a fixed vertex, all
vertex planes associated with different 2-dimensional coordinate planes through
that vertex coincide.

It is easy to verify that for a higher-dimensional lattice with all 2-cells
being quadrilaterals, planarity of vertex stars implies that the whole lattice
is contained in the 3-dimensional space that is spanned by the vertices of one
arbitrary elementary quadrilateral. Therefore, it is no restriction do define
A-nets of arbitrary dimension as maps $x: \Z^m \to \R^3$ with planar vertex stars.
Verifying the existence of 2-dimensional A-nets is straight forward since it is
not difficult to perform an iterative geometric construction of A-surfaces
which contains sufficiently many degrees of freedom at each step.  But, in the
higher-dimensional case, it is not obvious that the condition of planar vertex
stars can be imposed consistently even on a 3D lattice. Already for a single
hexahedron of a 3D lattice one obtains a closure condition. While it is clear
that one can choose 7 points associated with 7 vertices of a 3D cube such that
all 7 vertex stars are planar, the 7 points determine 4 planes that have to
intersect in a single point, i.e., the missing eighth vertex.  The existence of
this unique intersection point is guaranteed by

\begin{theorem}[Cox' theorem]
Let $P_1,P_2,P_3,P_4$ be four planes in $\RP^3$ which intersect in a point $x$.
Let $x_{ij} \in P_i \cap P_j, i \ne j$, be six points on the lines of intersection
of these planes and define four new planes $P_{ijk} = \inc{x_{ij},x_{jk},x_{ik}}$.
Then, the four planes $P_{123},P_{124},P_{134},P_{234}$ intersect in one point
$x_{1234}$ (cf. Fig.~\ref{fig:cox_gitter}).
\label{thm:cox}
\end{theorem}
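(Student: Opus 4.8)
The plan is to pass to homogeneous coordinates on $\RP^3 = \PP(\R^4)$ and to represent each plane $P_i$ by a linear form (covector) $\beta_i$ with $P_i = \{\beta_i = 0\}$. Since all four planes contain $x$, the covectors $\beta_1,\dots,\beta_4$ all lie in the three-dimensional annihilator $x^\perp \subset (\R^4)^*$; hence, generically, any three of them form a basis and the fourth is a linear combination of these with nonzero coefficients. I would exploit this by choosing coordinates adapted to the configuration: set $x = [0:0:0:1]$, take $P_1,P_2,P_3$ to be the coordinate planes $\{X_1=0\}, \{X_2=0\}, \{X_3=0\}$, and write $P_4 = \{aX_1+bX_2+cX_3 = 0\}$ with $a,b,c \neq 0$. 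This normalization uses up precisely the projective freedom fixing $x$ together with three reference planes, and it is the key that makes the final determinant collapse.

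Next I would parametrize the six points. Each line $P_i \cap P_j$ passes through $x$, so a generic point on it is pinned down by a single scalar $t_{ij}$; writing these out gives, for instance, $x_{12} = [0:0:1:t_{12}]$ and $x_{14} = [0:c:-b:t_{14}]$, together with the analogous expressions for the remaining four points. For each triple $\{i,j,k\}$ I would then compute the covector $\beta_{ijk}$ of the plane $P_{ijk} = \inc{x_{ij},x_{jk},x_{ik}}$ by solving the $3\times 3$ linear system expressing that $\beta_{ijk}$ annihilates the three spanning points. This step is routine and produces four explicit covectors depending linearly on the $t_{ij}$ and on $a,b,c$.

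Concurrence of $P_{123},P_{124},P_{134},P_{234}$ is then equivalent to linear dependence of the four covectors $\beta_{123},\beta_{124},\beta_{134},\beta_{234}$, i.e.\ to vanishing of the $4\times 4$ determinant $M$ whose rows are these covectors. I would establish $\det M = 0$ by a short reduction. After clearing denominators the last entries of the four covectors are $1,c,b,a$, so subtracting the appropriate multiples of the first row from the other three annihilates the last column except in the first row. Expanding along that column leaves a $3\times 3$ determinant of the form
\[
\det\begin{pmatrix} A & B & 0 \\ D & 0 & -B \\ 0 & -D & -A \end{pmatrix},
\]
where $A,B,D$ are the reduced first-three entries of the second row. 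The decisive structural point, a consequence of the symmetric way the six parameters enter, is that the three entries produced in the lower rows are exactly the negatives of $B,D,A$; this antisymmetry forces the determinant to vanish identically, so that $\det M = 0$.

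The main obstacle is not this computation but verifying that the four planes meet in a \emph{single} point rather than sharing a common line; that is, I must check that $M$ has rank exactly $3$. This I would obtain from genericity: any three of the four new planes are in general position and hence already intersect in a unique point, which forces $\operatorname{rank} M \ge 3$, while $\det M = 0$ forces $\operatorname{rank} M = 3$. The one-dimensional kernel of $M$ is then the sought intersection point $x_{1234}$. I would close by noting that the degenerate positions excluded here are precisely those ruled out by the genericity assumptions on the configuration.
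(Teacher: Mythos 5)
Your proof is correct. Note first that the paper does not actually prove Cox' theorem: it states it and refers the reader to the literature (Bobenko--Suris), so your coordinate argument is a self-contained substitute rather than a variant of an argument in the text. The computation itself checks out: with $x=[0{:}0{:}0{:}1]$, $P_i=\{X_i=0\}$ for $i=1,2,3$ and $P_4=\{aX_1+bX_2+cX_3=0\}$, the four covectors normalize to $\beta_{123}=(-t_{23},-t_{13},-t_{12},1)$, $\beta_{124}=(-(at_{12}+t_{24}),-(bt_{12}+t_{14}),-ct_{12},c)$, $\beta_{134}=(-(at_{13}+t_{34}),-bt_{13},t_{14}-ct_{13},b)$, $\beta_{234}=(-at_{23},t_{34}-bt_{23},t_{24}-ct_{23},a)$, and the row reduction you describe produces exactly the rows $(A,B,0)$, $(D,0,-B)$, $(0,-D,-A)$ with $A=ct_{23}-at_{12}-t_{24}$, $B=ct_{13}-bt_{12}-t_{14}$, $D=bt_{23}-at_{13}-t_{34}$; the resulting $3\times3$ determinant is $-ABD+ABD=0$ identically in the six parameters, and the rank-$3$ statement follows generically since, e.g., $(A,B,0)$ and $(D,0,-B)$ are independent whenever $B\neq0$. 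The trade-off is the usual one: your proof is elementary and verifiable by direct calculation but sheds no light on why the identity holds, whereas the standard treatments (and the spirit of the paper, which interprets Cox configurations as elementary hexahedra of A-nets) obtain the closure from the multidimensional consistency of the Moutard/Lelieuvre system or from synthetic Menelaus-type incidences, which is what makes the theorem propagate to the $4$D-consistency statements used later. Your genericity caveat at the end is appropriate and consistent with the genericity assumptions the paper imposes on A-nets throughout.
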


For a proof of Theorem \ref{thm:cox}, see, e.g., \cite{BobenkoSuris:2008:DDGBook}.

\begin{figure}[htb]
\begin{center}
 \input{ 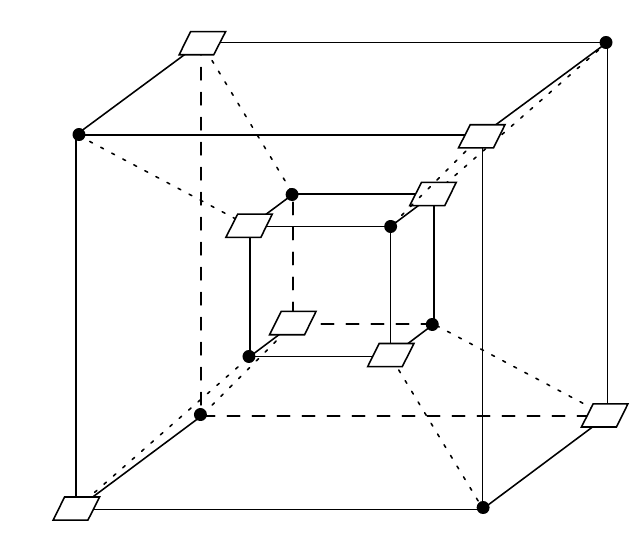_t } 
\end{center}
\caption{A Cox configuration of points $x$ and planes $P$ that are associated
with vertices of a 4D cube.
An edge connecting a point and a plane represents incidence. There are four
planes passing through each point and each plane contains four points.  There
are four ways of interpreting a 4D Cox configuration as an elementary
hexahedron of an A-net, each corresponding to the contraction of all edges of
one coordinate direction.}
\label{fig:cox_gitter}
\end{figure}

According to the previous considerations, we say that \emph{A-nets are governed
by a 3D system} in the following sense: feasible data at seven vertices of an
elementary hexahedron determine the data at the remaining vertex uniquely,
where ``feasible data'' refers to lattice points that satisfy the condition of
planar vertex stars.\footnote{Equivalently, one one may use the dual
description of A-nets in terms of their vertex planes, where the condition
on adjacent planes is that they all have to intersect in a single point.} As a consequence,
feasible initial data along three intersecting coordinate planes of $\Z^3$
determine the net on the whole of $\Z^3$.

Moreover, the 3D system governing discrete A-nets is \emph{multidimensionally
consistent}, i.e., it can be imposed consistently on higher-dimensional lattices
$\Z^m$. The most elementary building block for this is a 4D cube as in
Fig.~\ref{fig:4D_consistency}, right.  Prescribing feasible initial data at the
11 vertices $z,z_1,\dots,z_{34}$ yields, in a first step, the data at the four
vertices $z_{123},z_{124},z_{134},z_{234}$.  Subsequently, there exist four
different ways of determining the data at $z_{1234}$ as this vertex is the
intersection of four different 3D cubes. The fact that the potentially
different data at $z_{1234}$ coincide for arbitrary feasible initial data is
called \emph{4D consistency} of the 3D system. In general,
($m+1$)D consistency of an $m$D system implies consistency in arbitrary dimension.
Multidimensional consistency is
understood as discrete integrability and we say that the 3D system governing
A-nets is \emph{discrete integrable} (see \cite{BobenkoSuris:2008:DDGBook} and 
references therein).

\begin{figure}[htb]
\begin{center}
 \input{ 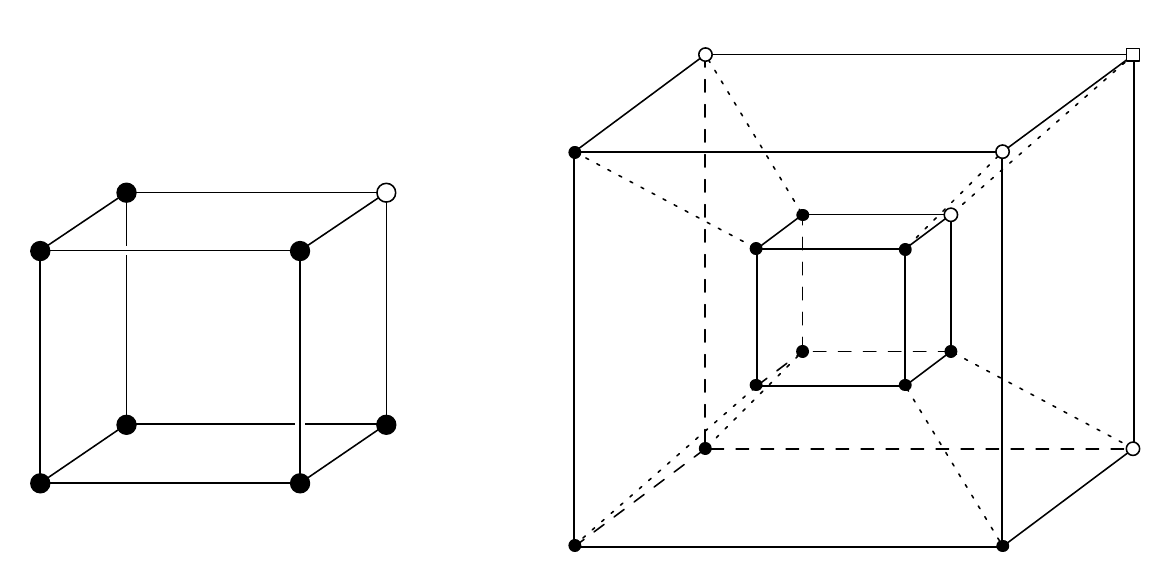_t } 
\end{center}
\caption{ Left: The data of a discrete A-net at seven vertices
$z,z_1,\dots,z_{23}$ of a 3D cube determine the data at the eighth vertex
$z_{123}$.  Right: The 3D system describing A-nets can be imposed consistently
on a 4D cube.}
\label{fig:4D_consistency}
\end{figure}

\paragraph{Algebraic description of higher-dimensional A-nets.}
As in the 2-dimensional case, discrete A-nets $x: \Z^m \to \R^3$ can be described
by their Lelieuvre normals $n : \Z^m \to \R^3$,
\begin{equation}
x_i - x = n_i \times n, \quad i = 1,\dots,m.
\label{eq:lelieuvre_mD}
\end{equation}
In the multidimensional case, Lelieuvre normals satisfy a system of
discrete Moutard equations \cite{Schief:1997:MoutardSuperposition}
\begin{equation}
\label{eq:moutard_minus_mD}
n_{ij} - n = a^{ij} (n_j - n_i), \quad i \ne j,
\end{equation}
with skew-symmetric Moutard coefficients $a^{ji} = -a^{ij}$.
The Moutard coefficients are not independent but, since A-nets are described by
a 3D system, satisfy the following relation (compatibility condition)
on each elementary hexahedron of $\Z^m$
\begin{equation}
a^{ij}_k = - \frac{a^{ij}}{a^{ij}a^{jk} + a^{jk}a^{ki} + a^{ki}a^{ij}},
\quad i \ne j \ne k \ne i.
\label{eq:ste}
\end{equation}
The coefficients $a^{ij}$ are understood as fields on elementary quadrilaterals
of $(i,j)$-coordinate planes, where a lower index $k$ represents a shift of the
variable $a^{ij}$ in the $k$-th coordinate direction.  The multidimensional
consistency of A-nets can be stated on an algebraic level as the
multidimensional consistency of equation \eqref{eq:ste}.

The Moutard coefficients of a multidimensional A-net can be parametrized by a
function $\tau$ at vertices.  More precisely, choosing an ordering for each
pair of distinct lattice directions, for example lexicographic ordering $i <
j$, selects one type of Moutard coefficients for each coordinate plane. Then,
there exists a (non-unique) function $\tau : \Z^m \to \R$ such that the
selected Moutard coefficients can be written as
\begin{equation}
a^{ij} = \frac{\tau_i \tau_j}{\tau \tau_{ij}}.
\label{eq:moutard_param_by_tau}
\end{equation}
It is a necessary and sufficient condition for the existence of a potential
$\tau$ that satisfies \eqref{eq:moutard_param_by_tau} that for each 3D cube the
ratios of Moutard coefficients associated with opposite faces coincide.  This
is the case, since \eqref{eq:ste} implies that
\begin{equation}
\frac{a^{ij}}{a^{ij}_k} =
\frac{a^{jk}}{a^{jk}_i} =
\frac{a^{ki}}{a^{ki}_j} =
- (a^{ij}a^{jk} + a^{jk}a^{ki} + a^{ki}a^{ij}).
\label{eq:moutard_constant_ratio}
\end{equation}
Indeed, if we regard \eqref{eq:moutard_param_by_tau} as a definition of $\tau_{ij}$,
it is not difficult to see that the associated compatibility conditions
\begin{equation*}
(\tau_{ij})_k = (\tau_{ik})_j = (\tau_{jk})_i
\end{equation*}
are satisfied modulo $a^{ij}/a^{ij}_k = a^{jk}/a^{jk}_i = a^{ki}/a^{ki}_j$.

The system \eqref{eq:ste} for Moutard coefficients on a 3-dimensional
sublattice is equivalent to a discrete BKP (Miwa) equation 
\cite{Miwa:1982:HirotaDifferenceEquations} 
for $\tau$ on that sublattice.
In the lexicographic case, i.e., Moutard coefficients parametrized 
according to \eqref{eq:moutard_param_by_tau}
with $1 \le i < j \le m$,
system \eqref{eq:ste} reduces to the Miwa equation in the form
\begin{equation}
\tau \tau_{ijk} - \tau_i \tau_{jk} + \tau_j \tau_{ik} - \tau_k \tau_{ij} = 0,
\quad i < j < k.
\label{eq:bkp_lexicographic}
\end{equation}
Different sets of Moutard coefficients parametrized by $\tau$ may yield
different relative signs in \eqref{eq:bkp_lexicographic}, and, in general, one
obtains different signs for different 3-dimensional sublattices. Having
observed this, it is worth mentioning that
equation~\eqref{eq:bkp_lexicographic} is a multidimensionally consistent
equation, i.e., it can be imposed simultaneously on each 3-dimensional
sublattice of a lattice $\Z^m$ of arbitrary dimension.

\paragraph{Cauchy problem for multidimensional A-nets.}
Using \eqref{eq:ste} as evolution equation for Moutard coefficients,
it is clear how to extend Cauchy data \eqref{eq:cauchy_data_a_surface} for A-surfaces 
to Cauchy data for multidimensional A-nets $x : \Z^m \to \R^3$.
One obtains, for example,
\begin{equation}
n (\mathcal{S}^i),\ i = 1,\dots,m;
\quad
a^{ij} (\mathcal{S}^{ij}),\ 1 \le i < j \le m;
\quad
x_0.
\label{eq:cauchy_data_a_net_md}
\end{equation}

\paragraph{Continuum limit.}
For an A-net $x : \Z^m \to \R^3, m \ge 3$, it is only possible to take the
continuum limit in at most two coordinate directions.
Recalling the 2-dimensional case, for fixed $i,j$, it
is necessary to flip Lelieuvre normals such that, e.g.,
\begin{equation*}
x_i - x = n_i \times n,
\quad
x_j - x = n \times n_j
\end{equation*}
in order to obtain a continuum limit in the $(i,j)$-coordinate planes.  To this
end, start with a discrete Lelieuvre normal $n$ satisfying
\eqref{eq:lelieuvre_mD} and perform a flip of every second Lelieuvre normal in
direction $j$,
\begin{equation*}
n \to (-1)^{z_j} n.
\end{equation*}
In the continuum limit of $i$ and $j$ directions, one obtains classical
A-surfaces as $(i,j)$-coordinate planes of the resulting semi-discrete
$m$-dimensional A-net.  However, it is not possible to perform another
continuum limit in a direction $k \ne i,j$ since, either in the $(i,k)$-planes
or in the $(j,k)$-planes, the limit does not exist.  In fact, there do not
exist higher-dimensional continuous A-nets beyond A-surfaces.

\subsection{Weingarten transformations of discrete A-surfaces}
\label{subsec:wtrafos_anets}

An essential aspect of privileged surface parametrizations such as conjugate,
curvature, or asymptotic line parametrizations is the corresponding class
of transformations.  The transformations associated with a specific class of
parametrization preserve that type of parametrization.  In the context of
surfaces parametrized along asymptotic lines, the corresponding transformations are
called \emph{Weingarten transformations} \cite{Eisenhart:1960:Treatise,
SchiefRogers:2002:BaecklundDarboux}. Two continuous A-surfaces are said to
be Weingarten transforms of each other if the line connecting corresponding
points is the intersection of the tangent planes to the two surfaces at these
points.  A literal discretization of classical Weingarten transformations
(see, e.g., \cite{Doliwa:2001:ANetsPluecker,
DoliwaNieszporskiSantini:2001:IntegrableReductionOfAnets,
Nieszporski:2002:DiscreteANets}) yields
\begin{definition}[Weingarten transformation of discrete A-surfaces / Weingarten property]
Two discrete A-surfaces $f, \tilde f : \Z^2 \to \R^3$ are related by a
\emph{Weingarten transformation} if for every $z \in \Z^2$ the
line $\inc{f(z),\tilde f(z)}$ is the intersection of the vertex planes of $f$
and $\tilde f$ at the points $f(z)$ and $\tilde f(z)$, respectively.  The net
$\tilde f$ is called a \emph{Weingarten transform} of the net $f$ (and vice
versa) and $f,\tilde f$ are said to form a \emph{Weingarten pair.} We say that
the \emph{Weingarten property} is satisfied at pairs $f(z),\tilde f(z)$ of
corresponding points.
\label{def:wtrafo_anets}
\end{definition}
It is a remarkable fact that many classes of
special surface parametrizations and their associated transformations can be
unified at the discrete level.  This means that surfaces and their transformations
become indistinguishable in the sense that they are described by the same
geometric properties or, algebraically, by the same equations.
Definition~\ref{def:wtrafo_anets} clearly illustrates this unification:
Discrete A-surfaces $f,\tilde f : \Z^2 \to \R^3$ form a Weingarten pair
if and only if $F: \Z^2 \times \left\{ 0,1 \right\}$ composed of the
layers $F(\cdot,0) = f$ and $F(\cdot,1) = \tilde f$ constitutes a 3-dimensional A-net.

\section{Hyperbolic nets in terms of crisscrossed quadrilaterals}
\label{sec:hypnets_as_crisscrossed_anets}

We begin with the introduction of hyperboloids and hyperboloid patches before
explaining the notion of hyperbolic nets and recapitulating previous results.
Subsequently, we give an elementary geometric description of those results,
which will be the starting point for our discussion of transformations of
hyperbolic nets.

\subsection{Hyperboloids and hyperboloid patches}
\label{subsec:hyperboloids}

A hyperboloid in our sense is a doubly ruled quadric in $\R^3$, i.e., a
hyperboloid of one sheet or a hyperbolic paraboloid.  This terminology is
justified by projective geometry since, in $\RP^3$, there exists only one type
of doubly ruled quadric.  Referring to an affine chart which embeds $\R^3
\subset \RP^3$, one may say that a doubly ruled quadric in $\RP^3$ appears as a
hyperbolic paraboloid in the affine part $\R^3$ if it is tangent to the ideal
plane at infinity, otherwise it appears as a hyperboloid of one sheet.  In
general, if a surface contains a straight line, obviously this line is an
asymptotic line, following a constant direction of vanishing normal curvature.
Moreover, it is an essential fact of elementary projective geometry that any
three mutually skew lines determine a unique hyperboloid.

\begin{definition}[Hyperboloid patch / ruling / regulus]
A \emph{hyperboloid patch} is a (parametrized) surface patch obtained by
restricting an asymptotic line parametrization $f:D \to \R^3$ of a hyperboloid to a
closed rectangle.  We call an asymptotic line of a hyperboloid also a
\emph{ruling}. Each of the two families of rulings that cover a hyperboloid
is called a \emph{regulus}.
\label{def:hyperboloid_patch}
\end{definition}

Geometrically, a hyperboloid patch is a piece of a hyperboloid cut out along
four asymptotic lines (cf.  Fig.~\ref{fig:hyperboloid_patch}, left).  Note that
not any four asymptotic lines of a hyperboloid bound a finite hyperboloid
patch.  More precisely, four asymptotic lines, two from each regulus, divide
each other into several line segments, four of them being finite.  There exists
a patch that is bounded by those finite segments if and only if a ruling of the
hyperboloid that intersects one finite segment also intersects the opposite
finite segment (see Fig.~\ref{fig:hyperboloid_patch}).

\begin{figure}[htb]
  \begin{center}
    \includegraphics[scale=.15]{cross_extension_3.png}
    \hspace{50pt}
     \input{ 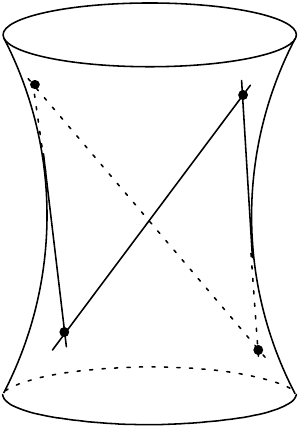_t } 
  \end{center}
  \caption{Left: A finite hyperboloid patch.
	Right: A finite skew quadrilateral on a hyperboloid that does not bound a hyperboloid patch.}
  \label{fig:hyperboloid_patch}
\end{figure}

\begin{definition}[Adapted hyperboloids / Tangency or $C^1$-condition]
We call a hyperboloid (patch) \emph{adapted} to a skew
quadrilateral if the edges of the quadrilateral are asymptotic lines of the
hyperboloid (patch).  Moreover, we say that two hyperboloids
(hyperboloid patches) adapted to edge adjacent skew quadrilaterals satisfy the
\emph{tangency condition}, or \emph{$C^1$-condition} for short, if the tangent
planes of the two surfaces coincide along the common asymptotic line.
\label{def:adapted_and_c1}
\end{definition}

\subsection{Previous results}
\label{subsec:previous_results}

In the following, we give a brief overview of 
the work \cite{Huhnen-VenedeyRoerig:2013:hyperbolicNets}, which
introduced \emph{hyperbolic nets} as a novel discretization of smooth A-surfaces.

\begin{definition}[Hyperbolic and pre-hyperbolic nets]
\emph{Hyperbolic nets} are piecewise smooth surfaces which are composed of
hyperboloid surface patches that are adapted to the skew quadrilaterals of a
supporting A-net and satisfy the $C^1$-condition. A \emph{pre-hyperbolic net},
in turn, consists of complete adapted hyperboloids that satisfy the $C^1$-condition.
\end{definition}

\begin{remark}
Hyperbolic nets may be regarded as ``$C^1$-versions'' of smooth A-surfaces,
whereby, for convenience, we do not exclude the occurrence of two adjacent
hyperboloid patches forming a cusp. Indeed, cusps are common singularities of
pseudospherical surfaces which form an important class of A-surfaces in the
sense that these are naturally parametrized in terms of asymptotic coordinates.
Furthermore, as seen in Fig~\ref{fig:hyperbolic_net}, the
discrete A-surface which becomes extended to a (pre-)hyperbolic net may be of
more general quad-graph combinatorics than $\Z^2$.
\end{remark}

\begin{figure}[htb]
\begin{center}
\parbox{.4\textwidth}{
\includegraphics[scale=.13]{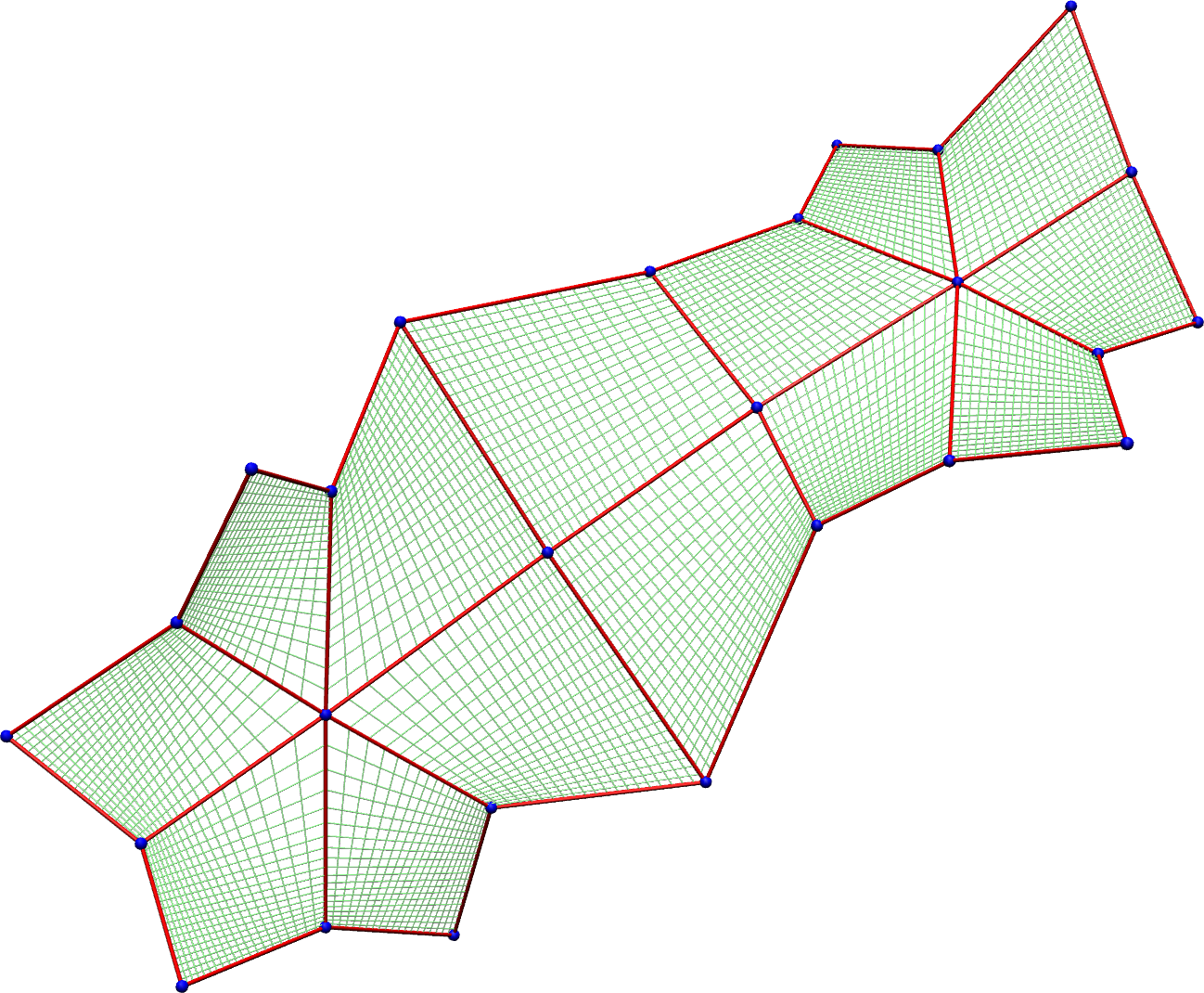}
}
\hspace{10pt}
\parbox{.4\textwidth}{
\includegraphics[scale=.12]{2x6hypnet_2.png}
}
\end{center}
\caption{Two perspectives of a hyperbolic net with two vertices of degree six.}
\label{fig:hyperbolic_net}
\end{figure}

Given two edge adjacent skew quadrilaterals and a hyperboloid adapted to one of
them, the $C^1$-condition determines a unique hyperboloid adapted to the other
quadrilateral. Accordingly, for a given A-net one may choose one inital adapted
hyperboloid and then propagate this hyperboloid to all other quadrilaterals of
the net by imposing the $C^1$-condition on adjacent hyperboloids.  The question
is whether this propagation is globally consistent, i.e., path-independent, so
that a supporting A-surface can be extended to a well-defined pre-hyperbolic
net.  It turns out that a simply connected discrete A-surface is extendable to
a pre-hyperbolic net if and only if all interior vertices are of even degree.
If we regard ``consecutive'' edges of a discrete A-surface as discrete
asymptotic lines then this is consistent with the classical theory since
asymptotic lines on continuous A-surfaces do not terminate.

Now, if we proceed from pre-hyperbolic nets to hyperbolic nets then the essential
difference is the following. In the context of pre-hyperbolic nets, the propagation of
adapted hyperboloids according to the $C^1$-condition is always possible
locally, while for hyperboloid patches this is not true. More precisely, given
two edge-adjacent skew quadrilaterals $Q,\tilde Q$ and a hyperboloid
$\mathcal{H}$ adapted to $Q$, the $C^1$-condition yields a unique hyperboloid
$\tilde{\mathcal{H}}$ adapted to $\tilde Q$. But, as explained in
Section~\ref{subsec:hyperboloids}, not every hyperboloid adapted to a skew
quadrilateral can be restricted to a patch that is bounded by the
quadrilateral. It may happen that $\mathcal{H}$ can be restricted to a patch
bounded by $Q$, but that the quadrilateral $\tilde Q$ does not bound a patch on
$\tilde{\mathcal{H}}$. In \cite{Huhnen-VenedeyRoerig:2013:hyperbolicNets} it
was shown that, assuming that $Q$ bounds a patch on $\mathcal{H}$, the
hyperboloid $\tilde{\mathcal{H}}$ can be restricted to a patch bounded by
$\tilde Q$ if and only if the quadrilaterals $Q$ and $\tilde Q$ are
\emph{equi-twisted}.  Roughly speaking, the \emph{twist of a pair of opposite
edges} of a skew quadrilateral indicates in which direction an edge turns if
it is transported into the opposite edge along the two remaining edges.
(The twists of the two pairs of edges are always complementary.)
Two edge-adjacent quadrilaterals are then called equi-twisted if the twist
of corresponding pairs of edges coincides. Accordingly, the notion of equi-twist
gives rise to \emph{equi-twisted quadrilateral strips} (cf.
Fig.~\ref{fig:equi_twisted_strip}).

\begin{figure}[htb]
\begin{center}
\includegraphics[scale=.1]{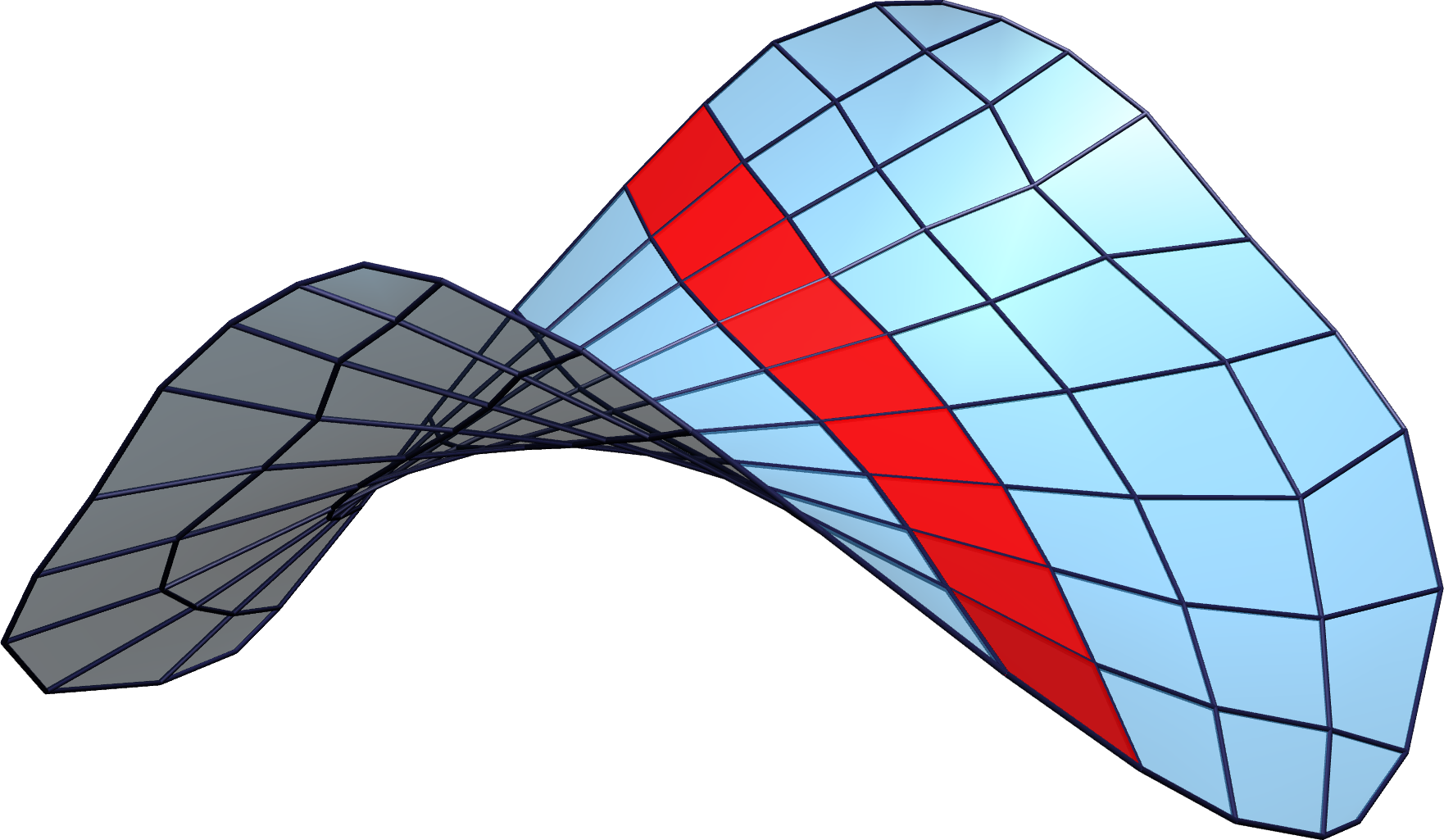}
\end{center}
\caption{An equi-twisted quadrilateral strip of an A-net.}
\label{fig:equi_twisted_strip}
\end{figure}

A discrete A-surface for which all quadrilateral strips are equi-twisted is
called equi-twisted for brevity. It is not difficult to see that, for an
equi-twisted A-surface, all interior vertices are of even degree.  It follows
that a simply connected discrete A-surface can be extended to a hyperbolic net
if and only if it is equi-twisted.  Moreover, for any skew quadrilateral there
exists a 1-parameter family of adapted hyperboloid patches.  Since the
$C^1$-propagation is unique modulo the initial patch, one obtains a 1-parameter
family of adapted hyperbolic nets for an equi-twisted A-surface.

In \cite{Huhnen-VenedeyRoerig:2013:hyperbolicNets}, A-nets, hyperboloids and
the extension of A-nets to (pre-)hyperbolic nets are all described within the
projective model of Pl\"ucker line geometry. In that model, lines in $\RP^3$
are represented by points on the Pl\"ucker quadric, which is a 4-dimensional
quadric embedded in a 5-dimensional projective space. The key feature of the
model is that two lines in $\RP^3$ intersect if and only if their
representatives in the Pl\"ucker quadric are polar with respect to the quadric.
In the Pl\"ucker setting, A-nets are discrete line congruences in the Plücker
quadric and the reguli of hyperboloids appear as non-degenerate conic sections
of the Pl\"ucker quadric with 2-planes.  For the purpose of this paper, it is
now appropriate to re-establish the theory of hyperbolic nets in affine $\R^3$
on a purely elementary geometric level.

\subsection{Hyperboloids and hyperboloid patches as skew quadrilaterals equipped
with crisscrossing lines.}
\label{subsec:crisscrossed_quads}
A skew quadrilateral in $\R^3$ consists of four points in general position that
are connected by finite edges.  A \emph{crisscrossed quadrilateral} is a skew
quadrilateral $Q$ that is equipped with a pair of intersecting lines as shown
in Fig.~\ref{fig:crossed_quadrilateral}.  A pair of such lines, which we call a
\emph{cross}, is determined by the corresponding quadruple of coplanar
intersection points with the extended edges of $Q$.  We refer to those
intersection points as \emph{cross vertices} and call the intersection point of
the two lines the \emph{centre} of the cross.
If a cross vertex is not only contained in an extended edge but
in the edge itself, we call it an \emph{internal cross vertex}.
If all four vertices of a cross are internal, we say that $Q$ is equipped with
an \emph{internal cross} as in the example of Fig.~\ref{fig:crossed_quadrilateral}.
It is noted that coplanarity of the cross vertices implies that the number
of internal cross vertices is always even.

\begin{figure}[htb]
  \begin{center}
     \input{ 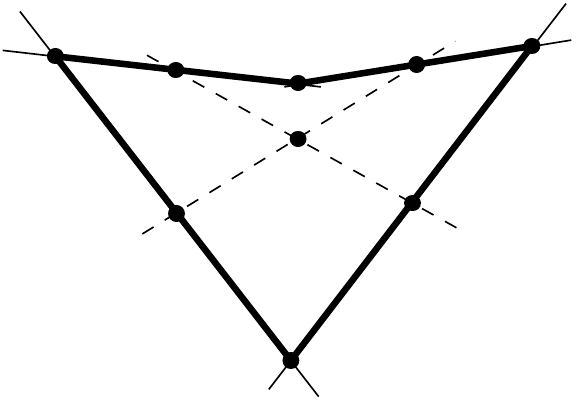_t } 
  \end{center}
  \caption{A skew quadrilateral that is equipped with an 	internal cross.}
  \label{fig:crossed_quadrilateral}
\end{figure}

For any given skew quadrilateral $Q$, there exists a 1-parameter family of
adapted hyperboloids (cf. Definition~\ref{def:adapted_and_c1}).  Since a
hyperboloid is determined by three skew lines, extending $Q$ to a crisscrossed
quadrilateral determines a unique adapted hyperboloid, where a 2-parameter
family of crosses belongs to the same adapted hyperboloid.  Not every
hyperboloid adapted to $Q$ can be restricted to a patch bounded by $Q$, as
indicated in Fig.~\ref{fig:hyperboloid_patch}, right.  The restriction is
possible if and only if there exists an internal cross that encodes the
adapted hyperboloid. Therefore, hyperboloid patches adapted to $Q$ can be
conveniently described by internal crosses.

The extension of a skew quadrilateral $Q=(x_1,x_2,x_3,x_4)$ to a crisscrossed
quadrilateral is determined by the choice of three cross vertices on three
extended edges, say $p_{12},p_{23},p_{34}$ in the notation of
Fig.~\ref{fig:crossed_quadrilateral}.  The fourth vertex $p_{41}$ is then
obtained as the intersection of the plane $\inc{p_{12},p_{23},p_{34}}$ with the
fourth extended edge.  The three vertices $p_{12},p_{23},p_{34}$ can be
described as affine combinations of the points $x_1,x_2,x_3,x_4$. For this
purpose, introduce scalars $\rho_1,\rho_2,\rho_3,\rho_4$ at vertices such
that
\begin{equation}
p_{12} = \frac{\rho_1 x_1 + \rho_2 x_2}{\rho_1 + \rho_2}, \quad
p_{23} = \frac{\rho_2 x_2 + \rho_3 x_3}{\rho_2 + \rho_3}, \quad
p_{34} = \frac{\rho_3 x_3 + \rho_4 x_4}{\rho_3 + \rho_4}.
\label{eq:p_12_23_34_rho}
\end{equation}
The $\rho_i$ are unique up to homogeneous scaling, $\rho_i \to \alpha
\rho_i,\ \alpha \ne 0$. Ratios of these scalars correspond to ratios of
oriented lengths\footnote{ For two points $A,B$ in an affine metric space, one
can introduce the \emph{oriented length} $l(A,B) = -l(B,A)$ of the segment
$\overrightarrow{AB}$ with respect to a chosen orientation of the line
spanned by $A$ and $B$. Depending on the orientation, one says $A \le B$ or $B \le A$ and defines
\begin{equation*}
l(A,B) =
\left\{
\begin{array}{rl}
d(A,B) & \text{if } A \le B \\
-d(A,B) & \text{if } A \ge B.
\end{array}
\right.
\end{equation*}
}
that involve adjacent vertices of $Q$ and the cross vertex on the
corresponding extended edge.
For example
\begin{eqnarray*}
&& p_{12} = x_1 + \frac{\rho_2}{\rho_1 + \rho_2} (x_2 - x_1)
	= x_2 + \frac{\rho_1}{\rho_1 + \rho_2} (x_1 - x_2) \\
&\iff& x_2 = p_{12} + \frac{\rho_1}{\rho_1 + \rho_2} (x_2 - x_1).
\end{eqnarray*}
Therefore,
\begin{equation*}
l(x_1,p_{12}) = \frac{\rho_2}{\rho_1 + \rho_2} l(x_1,x_2)
\quad \text{and} \quad
l(p_{12},x_2) = \frac{\rho_1}{\rho_1 + \rho_2} l(x_1,x_2),
\end{equation*}
which yields
\begin{equation*}
\frac{l(x_1,p_{12})}{l(p_{12},x_2)} = \frac{\rho_2}{\rho_1}.
\end{equation*}
In the same manner, one obtains
\begin{equation*}
\frac{l(x_2,p_{23})}{l(p_{23},x_3)} = \frac{\rho_3}{\rho_2}, \quad
\frac{l(x_3,p_{34})}{l(p_{34},x_4)} = \frac{\rho_4}{\rho_3}.
\end{equation*}
The point $p_{41}$ lies in the plane spanned by $p_{12},p_{23}$, and $p_{34}$
if and only if it is related to $x_1$ and $x_4$ in an analogous manner, that is,
\begin{equation}
\frac{l(x_4,p_{41})}{l(p_{41},x_1)} = \frac{\rho_1}{\rho_4}
\quad \iff \quad
p_{41} = \frac{\rho_4 x_4 + \rho_1 x_1}{\rho_4 + \rho_1}.
\label{eq:p_41_rho}
\end{equation}
This assertion is a consequence of the following theorem for
$n=4$.

\begin{theorem}[Generalized	Menelaus Theorem]
Let	$x_1,...,x_{n+1}$	be $n+1$ points in general position in $\R^n$, i.e.,
$\inc{x_1,\dots,x_{n+1}} = \R^n$.  Let $p_{i,i+1}$ be some points on the lines
$\inc{x_i,x_{i+1}}$ different from $x_i, x_{i+1}$ (indices are taken modulo
$n+1$). The $n+1$ points $p_{i,i+1}$ lie in an affine hyperplane if and only if
the following relation for the quotients of directed lengths holds:
\begin{equation*}
M(x_1,p_{1,2},\dots,x_{n+1},p_{n+1,1}) := \prod_{i=1}^{n+1} \frac{l(x_i, p_{i,i+1})}{l(p_{i,i+1},x_{i+1})} = (-1)^{n+1}.
\end{equation*}
\label{thm:menelaus}
\end{theorem}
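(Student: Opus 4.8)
The plan is to prove the Generalized Menelaus Theorem by induction on $n$, using the classical Menelaus theorem for triangles (the case $n = 2$) as the base. The key to the induction step is to reduce the dimension by one via a well-chosen auxiliary point and a projection. Concretely, I would pick an auxiliary line through two of the $x_i$, intersect it with the affine hyperplane (or the hyperplane spanned by the $p_{i,i+1}$), and use this to split the $(n+1)$-gon configuration into a smaller one plus a triangle, so that the product of length ratios factors accordingly.

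First I would set up coordinates or, better, work invariantly with affine/directed-length ratios, noting that each factor $l(x_i,p_{i,i+1})/l(p_{i,i+1},x_{i+1})$ is exactly the scalar determined by writing $p_{i,i+1}$ as an affine combination of $x_i$ and $x_{i+1}$, as in \eqref{eq:p_12_23_34_rho}. The direction ``$p_{i,i+1}$ coplanar $\implies$ product $=(-1)^{n+1}$'' I would handle by choosing one of the original vertices, say $x_{n+1}$, and considering the diagonal line $\inc{x_1,x_n}$. Let $q$ be the intersection of this line with the hyperplane $H$ containing all the $p_{i,i+1}$. Then the points $p_{1,2},\dots,p_{n-1,n}$ together with $q$ lie in $H$ and form a Menelaus configuration for the $n$ points $x_1,\dots,x_n$ in the hyperplane $\inc{x_1,\dots,x_n}\cong\R^{n-1}$, while $p_{n,n+1}, p_{n+1,1}, q$ form a triangle-Menelaus configuration on $x_n, x_{n+1}, x_1$. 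By the inductive hypothesis the first gives $M(x_1,\dots,x_n,q)=(-1)^{n}$ and the classical Menelaus theorem on the triangle $x_1x_nx_{n+1}$ gives the ratio relation involving $q$; multiplying these and observing that the two factors involving $q$ (namely $l(x_n,q)/l(q,x_1)$ and its reciprocal with opposite sign) cancel up to a sign $-1$ yields the claimed product $(-1)^{n+1}$.

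For the converse direction, ``product $=(-1)^{n+1}$ $\implies$ coplanar,'' I would argue by the uniqueness of the last point: given $p_{1,2},\dots,p_{n,n+1}$ on their respective edges, the hyperplane $H = \inc{p_{1,2},\dots,p_{n,n+1}}$ meets the line $\inc{x_{n+1},x_1}$ in a unique point $p'$, which by the forward direction satisfies the product relation; comparing with the hypothesis forces $p' = p_{n+1,1}$ since the position of a point on a line is determined by its directed-length ratio. This is the standard ``if the product equals the required constant, the configuration closes up'' argument.

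The main obstacle I expect is bookkeeping the signs and the directed-length conventions through the induction step, especially making sure that the factors attached to the auxiliary point $q$ on the diagonal cancel with exactly the correct sign to convert $(-1)^n$ into $(-1)^{n+1}$; the geometry of the reduction is routine, but the orientation-sensitive definition of $l(A,B) = -l(B,A)$ means a careful accounting is needed so that reciprocals combine to produce the single extra factor of $-1$. A secondary point worth checking is the genericity/non-degeneracy: one must ensure the diagonal line $\inc{x_1,x_n}$ is not parallel to $H$ and that $q$ differs from the relevant vertices, which follows from the general-position hypothesis $\inc{x_1,\dots,x_{n+1}}=\R^n$.
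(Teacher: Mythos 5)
The paper does not actually prove Theorem~\ref{thm:menelaus}; it defers entirely to \cite{BobenkoSuris:2008:DDGBook}, where the standard argument is linear-algebraic: writing $p_{i,i+1}=(1-\lambda_i)x_i+\lambda_i x_{i+1}$, the $n+1$ points lie in a hyperplane iff they admit a nontrivial affine dependence, and expanding that dependence in the affinely independent points $x_1,\dots,x_{n+1}$ forces a two-term cyclic recursion for the coefficients whose consistency condition is exactly $\prod_i \lambda_i/(1-\lambda_i)=(-1)^{n+1}$. Your induction on $n$ via the diagonal $\inc{x_1,x_n}$ is therefore a genuinely different and essentially correct route: the points $p_{1,2},\dots,p_{n-1,n},q$ form a Menelaus configuration for $x_1,\dots,x_n$ inside the hyperplane $\inc{x_1,\dots,x_n}$ (their common ambient hyperplane being $H\cap\inc{x_1,\dots,x_n}$), the triangle $x_nx_{n+1}x_1$ supplies the classical case, and multiplying the two relations recovers the full product; the converse by the closing-up argument is standard. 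The linear-algebra proof buys simultaneous treatment of both implications and no auxiliary intersection points; yours buys a transparent reduction to the familiar triangle case.

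Two points to tighten. First, the sign accounting: the two factors involving $q$ are $l(x_n,q)/l(q,x_1)$ from the lower-dimensional configuration and $l(x_1,q)/l(q,x_n)$ from the triangle, and their product is exactly $+1$ (each of the two argument reversals contributes one sign), not $-1$; the factor that upgrades $(-1)^n$ to $(-1)^{n+1}$ is the $-1$ from the triangle Menelaus relation itself. Second, the nondegeneracy of $q$ does not follow from general position of the $x_i$ alone, since $H$ could be parallel to the chosen diagonal. General position does give you that $H$ contains none of the $x_i$ (if it contained one, it would contain the adjacent extended edges and hence all of them, contradicting $\dim H=n-1$), so $q\ne x_1,x_n$ whenever $q$ exists; the parallel case must be handled separately, e.g.\ by choosing another diagonal, passing to a projective formulation in which the ratios become cross-ratios, or a continuity argument. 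With those repairs the induction goes through.
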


For a proof of Theorem \ref{thm:menelaus} see, e.g.,
\cite{BobenkoSuris:2008:DDGBook}.

On use of \eqref{eq:p_12_23_34_rho} and \eqref{eq:p_41_rho}, it is easily
verified that the centre $p$ of the cross is given by
\begin{equation*}
p = \inc{p_{12},p_{34}} \cap \inc{p_{23},p_{41}}
	= \frac{\rho_1 x_1 + \rho_2 x_2 + \rho_3 x_3 + \rho_4 x_4}
{\rho_1 + \rho_2 + \rho_3 + \rho_4}.
\end{equation*}

It is important to note that $\rho_1,\rho_2,\rho_3,\rho_4$ describe an internal
cross if and only if they have the same sign, which follows from the
observation that a cross vertex is internal if and only if the two
corresponding $\rho$s have the same sign.
More general, an adapted hyperboloid can be restricted to a patch that
is bounded by the supporting quadrilateral if for each pair of opposite cross vertices
either both vertices are internal, or both vertices are external.  If this
holds for one pair, it automatically holds for the other pair as well.
Therefore, the scalars $\rho_1,\rho_2,\rho_3,\rho_4$ determine a restrictable
hyperboloid if and only if $\rho_1 \rho_2 \rho_3 \rho_4 > 0$.

{Denote by
\begin{equation*}
\crr(a,b,c,d) =
	\frac{l(a,b)}{l(b,c)}	
	\frac{l(c,d)}{l(d,a)}
\end{equation*}
the cross-ratio of four collinear points and let $\tilde p_{ij}$ be four additional
cross vertices that are determined by scalars $\tilde \rho_i$.  It is a fact of
elementary projective geometry that the lines $\inc{p_{12},p_{34}}$ and
$\inc{\tilde p_{12},\tilde p_{34}}$ determine the same adapted hyperboloid if
and only if the cross-ratios
\begin{equation*}
\crr(x_1,p_{12},x_2,\tilde p_{12}) = \frac{\rho_2}{\rho_1} \cdot \frac{\tilde \rho_1}{\tilde \rho_2}
\quad \text{and} \quad
\crr(x_4,p_{34},x_3,\tilde p_{34}) = \frac{\rho_3}{\rho_4} \cdot \frac{\tilde \rho_4}{\tilde \rho_3}
\end{equation*}
coincide.} Summing up the previous considerations, we have established

\begin{lemma}
The extension of a skew quadrilateral $Q=(x_1,x_2,x_3,x_4)$ in $\R^3$ to a crisscrossed
quadrilateral (cf. Fig.~\ref{fig:crossed_quadrilateral})
corresponds to the choice of scalars $\rho_1,\rho_2,\rho_3,\rho_4$
associated with the vertices of $Q$. The vertices of the cross are then parametrized by
\begin{equation*}
p_{12} = \frac{\rho_1 x_1 + \rho_2 x_2}{\rho_1 + \rho_2}, \quad
p_{23} = \frac{\rho_2 x_2 + \rho_3 x_3}{\rho_2 + \rho_3}, \quad
p_{34} = \frac{\rho_3 x_3 + \rho_4 x_4}{\rho_3 + \rho_4}, \quad
p_{41} = \frac{\rho_4 x_4 + \rho_1 x_1}{\rho_4 + \rho_1}.
\end{equation*}
The centre $p$ of the cross is given by
\begin{equation*}
p = \frac{\rho_1 x_1 + \rho_2 x_2 + \rho_3 x_3 + \rho_4 x_4}
{\rho_1 + \rho_2 + \rho_3 + \rho_4}.
\end{equation*}
Given the cross vertices, the scalars $\rho_i$ are unique up to homogeneous
scaling, $\rho_i \to \alpha \rho_i,\ \alpha \ne 0$.  They determine an internal
cross if and only if all $\rho_i$ have the same sign.\footnote{One also obtains
a well-defined cross if exactly one $\rho_i$ equals zero so that two opposite
vertices of the quadrilateral become cross vertices. This corresponds to the limiting case of the
adapted hyperboloids degenerating to a pair of intersecting planes, each plane
being spanned by two adjacent edges of the supporting quadrilateral.}
{Adapted hyperboloids determined by scalars $\rho_i,\tilde \rho_i$ coincide if and only if
\begin{equation}
\frac{\rho_1\rho_3}{\rho_2 \rho_4}
=
\frac{\tilde \rho_1 \tilde \rho_3}{\tilde \rho_2 \tilde \rho_4}.
\label{eq:rho_tilde_rho_same_hyperboloid}
\end{equation}
}
\label{lem:cross_as_rhos}
\end{lemma}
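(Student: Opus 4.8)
The plan is to assemble the lemma directly from the computations carried out immediately before its statement, verifying each assertion in turn. I would first treat the parametrization of the cross. Prescribing a cross amounts to choosing the three vertices $p_{12}, p_{23}, p_{34}$ on three extended edges; writing each as an affine combination of the two adjacent base points and matching the ratios of directed lengths computed above fixes the quotients $\rho_2/\rho_1$, $\rho_3/\rho_2$, $\rho_4/\rho_3$ through \eqref{eq:p_12_23_34_rho}. The fourth vertex $p_{41}$ is then forced by the requirement that all four cross vertices be coplanar, and invoking the Generalized Menelaus Theorem (Theorem~\ref{thm:menelaus}) for the quadruple $x_1, x_2, x_3, x_4$ yields precisely the form \eqref{eq:p_41_rho}, since the four oriented-length ratios multiply to $(-1)^{n+1} = 1$. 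Because the three consecutive ratios determine every quotient $\rho_i/\rho_1$, the scalars are pinned down up to the common factor $\alpha$, giving uniqueness up to homogeneous scaling. For the centre I would verify directly that $p = (\textstyle\sum_i \rho_i x_i)/(\sum_i \rho_i)$ is simultaneously an affine combination of $p_{12}$ and $p_{34}$, with coefficients $(\rho_1+\rho_2)/\sum_i\rho_i$ and $(\rho_3+\rho_4)/\sum_i\rho_i$, and of $p_{23}$ and $p_{41}$; hence $p$ lies on both lines $\inc{p_{12},p_{34}}$ and $\inc{p_{23},p_{41}}$ and is their intersection.

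For the sign criterion, the key observation is that the vertex $p_{12} = (\rho_1 x_1 + \rho_2 x_2)/(\rho_1 + \rho_2)$ is internal, that is, lies on the edge $[x_1,x_2]$ rather than on its extension, exactly when both coefficients of the convex combination are positive, which holds if and only if $\rho_1$ and $\rho_2$ share the same sign. Applying this to each consecutive pair and chaining the resulting sign constraints around the quadrilateral shows that all four cross vertices are internal if and only if $\rho_1,\rho_2,\rho_3,\rho_4$ carry a common sign.

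The remaining and only genuinely computational assertion is the characterization \eqref{eq:rho_tilde_rho_same_hyperboloid} of when two sets of scalars describe the same adapted hyperboloid. Here I would invoke the cross-ratio criterion recorded just above the lemma: the two crosses encode the same hyperboloid precisely when the cross-ratios $\crr(x_1,p_{12},x_2,\tilde p_{12})$ and $\crr(x_4,p_{34},x_3,\tilde p_{34})$ coincide. Substituting the expressions $\rho_2\tilde\rho_1/(\rho_1\tilde\rho_2)$ and $\rho_3\tilde\rho_4/(\rho_4\tilde\rho_3)$ and clearing denominators reduces their equality to $\rho_1\rho_3\,\tilde\rho_2\tilde\rho_4 = \rho_2\rho_4\,\tilde\rho_1\tilde\rho_3$, which is exactly \eqref{eq:rho_tilde_rho_same_hyperboloid} after division. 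Since the individual pieces are already in place, the lemma is essentially a consolidation; I expect the main (though modest) obstacle to be keeping the orientations in the directed-length and cross-ratio ratios consistent throughout, so that the signs in the Menelaus relation and in the two cross-ratios are compatible. Once a coherent orientation is fixed on each extended edge, every assertion follows from the short manipulations indicated above.
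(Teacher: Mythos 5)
Your proposal is correct and follows essentially the same route as the paper, whose ``proof'' of Lemma~\ref{lem:cross_as_rhos} is precisely the consolidation of the preceding computations: the oriented-length ratios \eqref{eq:p_12_23_34_rho}, the Menelaus argument forcing \eqref{eq:p_41_rho}, the direct affine-combination check for the centre, the sign criterion for internal cross vertices, and the cross-ratio criterion yielding \eqref{eq:rho_tilde_rho_same_hyperboloid}. Nothing is missing.
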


It is convenient to identify a hyperboloid (patch) with
the 2-parameter family of corresponding (internal) crossesthat are
related according to \eqref{eq:rho_tilde_rho_same_hyperboloid}.

{
\begin{remark}
The case $\rho_1 \rho_3/\rho_2 \rho_4=1$ corresponds to adapted
hyperbolic paraboloids since these are characterized by the property that
for each regulus all rulings are parallel to a plane (see, e.g.,
\cite{KaeferboeckPottmann:2012:DiscreteAffineMinimal}).
\label{rem:hyperbolic_paraboloids_shape_parameter}
\end{remark}
}

\subsection{Hyperbolic nets as crisscrossed A-surfaces}
\label{subsec:crisscrossed_anets}

The extension of a discrete A-surface to a hyperbolic net (or a pre-hyperbolic
net) can be understood as equipping elementary quadrilaterals of the A-net with
crosses such that hyperboloid patches (or hyperboloids) associated with
edge-adjacent quadrilaterals satisfy the $C^1$-condition. Without loss of generality,
we may assume that crosses representing hyperboloids adapted to edge-adjacent 
quadrilaterals share their cross vertex on the common extended edge (cf.
Fig.~\ref{fig:c1_condition_coplanar_points}, right) and call an A-net equipped
with such crosses a \emph{crisscrossed A-net}.  According to
Lemma~\ref{lem:cross_as_rhos}, the extension of an A-net to a crisscrossed
A-net corresponds to prescribing a discrete function $\rho$ defined at lattice
points so that we may label a crisscrossed A-net by a pair $(x,\rho)$. Two
functions $\rho$ and $\tilde\rho$ describe the same crosses if and only if they
differ by a constant factor, $\tilde \rho = c \rho$. All crosses are internal,
i.e., they describe hyperboloid patches, if $\rho$ is stricly positive or
strictly negative.  The analysis of crisscrossed A-nets can be done either
geometrically via incidence theorems or algebraically in terms of the discrete
scalar function $\rho$.

\paragraph{Description of the $C^1$-condition in terms of crisscrossed
quadri\-laterals.}
Since hyperboloids are quadratic surfaces, tangency of two hyperboloids
(hyperboloid patches) along a common asymptotic line is guaranteed if the
hyperboloids (patches) are tangent at three points of this line.\footnote{This
can be verified easily, e.g., in the Pl\"ucker geometric setting as done in
\cite{Huhnen-VenedeyRoerig:2013:hyperbolicNets}.}
Now, consider two edge-adjacent quadrilaterals of an A-net as in
Fig.~\ref{fig:c1_condition_coplanar_points}.  The planarity of vertex
stars of an A-net and the definition of adapted hyperboloids implies that
any two adapted hyperboloids are tangent at the two points $b$ and $d$. In order to have
tangency along the common asymptotic line $\inc{b,d}$, it is therefore
sufficient to require tangency at the common cross vertex $y$.
This means that the planes $\inc{a,b,d}$ and $\inc{b,c,d}$ have to coincide,
i.e., the points $a, b, c, d$ have to be coplanar.

\begin{figure}[htb]
\begin{center}
 \input{ 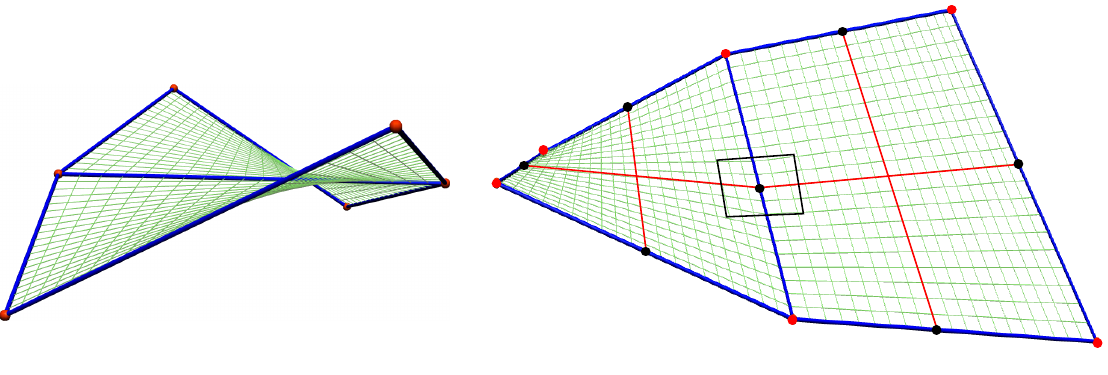_t } 
\end{center}
\caption{Two perspectives of adjacent skew quadrilaterals of an A-net with
hyperboloid patches adapted to those quadrilaterals.}
\label{fig:c1_condition_coplanar_points}
\end{figure}

The previous considerations establish

\begin{lemma}[$C^1$-condition]
Consider two edge-adjacent quadrilaterals of a crisscrossed A-net, using the
notation of Fig.~\ref{fig:c1_condition_coplanar_points}. The corresponding
adapted hyperboloids are tangent along the common asymptotic line $\inc{b,d}$
if and only if the points $a,b,c,d$ are coplanar.  As for the corresponding
surfaces, we say that two such crisscrossed quadrilaterals, or the crosses
themselves, satisfy the \emph{(local) $C^1$-condition}.
\label{lem:local_c1}
\end{lemma}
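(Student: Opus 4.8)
The plan is to reduce the statement about tangency along the entire common ruling to a finite incidence condition, exploiting that the two surfaces involved are doubly ruled quadrics. The starting point is the principle recorded in the paragraph preceding the lemma (verifiable in the Pl\"ucker-geometric setting): two hyperboloids that are tangent at three distinct points of a common ruling are tangent all along that ruling. Thus it suffices to exhibit three points of $\inc{b,d}$ at which tangency holds, and to show that at two of them tangency is automatic while at the third it is equivalent to the asserted coplanarity.

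First I would establish that the two adapted hyperboloids $\mathcal{H}_1,\mathcal{H}_2$ are tangent at the endpoints $b$ and $d$ of the common edge, independently of the chosen crosses. At the shared vertex $b$, the two edges of each quadrilateral incident to $b$ are rulings of the corresponding adapted hyperboloid (Definition~\ref{def:adapted_and_c1}), so the tangent plane of that hyperboloid at $b$ is spanned by the two rulings through $b$, i.e.\ by $b$ together with its two neighbours in the respective quadrilateral. Since all neighbours of $b$ lie in the vertex plane of the A-net at $b$ (Definition~\ref{def:anet_2d}), each spanned plane is a $2$-plane contained in the vertex plane and hence coincides with it. Therefore both tangent planes at $b$ equal the vertex plane at $b$, giving tangency of $\mathcal{H}_1$ and $\mathcal{H}_2$ at $b$; the same argument applies at $d$.

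It then remains to analyse tangency at the common cross vertex $y$, which lies on $\inc{b,d}$ and, by the construction of crosses, is distinct from $b$ and $d$. Through $y$ each hyperboloid carries two rulings: the common edge $\inc{b,d}$ itself, and the crossing line of the respective quadrilateral through $y$, which passes through $a$ for $\mathcal{H}_1$ and through $c$ for $\mathcal{H}_2$. Consequently the tangent plane of $\mathcal{H}_1$ at $y$ is the plane $\inc{a,b,d}$ and that of $\mathcal{H}_2$ at $y$ is $\inc{b,c,d}$. Both planes contain the line $\inc{b,d}$, so they coincide if and only if $a$ lies in $\inc{b,c,d}$, that is, if and only if $a,b,c,d$ are coplanar.

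Combining these observations proves both implications. For the ``if'' direction, coplanarity of $a,b,c,d$ yields tangency at $y$, which together with the automatic tangency at $b$ and $d$ supplies three points of tangency on $\inc{b,d}$ and hence, by the three-point principle, tangency along the whole line. For the ``only if'' direction, tangency along $\inc{b,d}$ forces tangency at $y$ in particular, whence the two tangent planes at $y$ agree and $a,b,c,d$ are coplanar. The main technical point to handle carefully is the identification of the tangent plane of a doubly ruled quadric at a point with the span of the two rulings through that point, together with the genericity needed to ensure the relevant triples of points are affinely independent so that each spanned plane is genuinely two-dimensional; both are guaranteed by the standing genericity assumption on the A-net.
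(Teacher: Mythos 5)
Your proof is correct and follows essentially the same route as the paper: the three-point tangency principle for quadrics, automatic tangency at the shared vertices $b$ and $d$ via planarity of vertex stars, and reduction of tangency at the common cross vertex $y$ to the coincidence of the planes $\inc{a,b,d}$ and $\inc{b,c,d}$, i.e.\ coplanarity of $a,b,c,d$. You merely spell out in more detail the identification of the tangent planes with the spans of the rulings, which the paper leaves implicit.
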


\begin{remark}
One can interpret Lemma~\ref{lem:local_c1} as follows.  The two patches in
Fig.~\ref{fig:c1_condition_coplanar_points} satisfy the $C^1$-condition if and
only if the points $a$ and $c$ are related by a projection through the line
$\inc{b,d}$. In particular, $a$ and $c$ are independent of the common cross
vertex $y$.
\label{rem:local_c1_projection}
\end{remark}

The $C^1$-condition may be expressed algebraically in
terms of the function $\rho$ at vertices.

\begin{lemma}
For two edge-adjacent quadrilaterals of a crisscrossed A-net that are labelled
as in Fig.~\ref{fig:local_c1_rho}, the four points 
\begin{equation*}
p=\frac{\rho_1 x_1 + \rho_6 x_6}{\rho_1 + \rho_6},\ x_2,\
q=\frac{\rho_3 x_3 + \rho_4 x_4}{\rho_3 + \rho_4},\ x_5
\end{equation*}
are coplanar if and only if,
with respect to the depicted parallel invariant $a\tilde a$,
\begin{equation}
\frac{\rho_3 \rho_6}{\rho_1 \rho_4} = a \tilde a.
\label{eq:local_c1_rho}
\end{equation}
\label{lem:local_c1_rho}
\end{lemma}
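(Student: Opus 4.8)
The plan is to coordinatize the two quadrilaterals on a $2\times 3$ block of $\Z^2$ and to reduce the coplanarity of $p, x_2, q, x_5$ to the vanishing of a single scalar triple product, which the Lelieuvre representation \eqref{eq:lel_normal_2d} together with the Moutard equation \eqref{eq:moutard_minus} then factors into the desired $\rho$-relation. Concretely, I would place $x_1,x_2,x_3$ at $(0,0),(1,0),(2,0)$ and $x_6,x_5,x_4$ at $(0,1),(1,1),(2,1)$, so that the left quadrilateral is $(x_1,x_2,x_5,x_6)$ and the right one is $(x_2,x_3,x_4,x_5)$, sharing the common edge $\inc{x_2,x_5}$. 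Writing the Lelieuvre normals as $n,n_1,n_{11}$ along the top row and $n_2,n_{12},n_{112}$ along the bottom row, the left Moutard coefficient is $a=a^{12}$ and the right one is $\tilde a=a^{12}_1$; by Remark~\ref{rem:parallel_invariants_aij} their product $a\tilde a$ is precisely the parallel invariant depicted in Fig.~\ref{fig:local_c1_rho}.

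First I would express the coplanarity of the four points as $[\,p-x_2,\ q-x_2,\ x_5-x_2\,]=0$, where $[\cdot,\cdot,\cdot]$ denotes the scalar triple product. Using \eqref{eq:lel_normal_2d} to rewrite every edge and diagonal as a cross product of adjacent normals, a short computation yields
\[
p-x_2=(n\times n_1)+s\,(n_2\times n),\qquad
q-x_2=(n_{11}\times n_1)+t\,(n_{112}\times n_{11}),\qquad
x_5-x_2=n_{12}\times n_1,
\]
with the abbreviations $s=\rho_6/(\rho_1+\rho_6)$ and $t=\rho_4/(\rho_3+\rho_4)$ in the notation of the lemma. Thus the whole problem reduces to evaluating one triple product whose three entries are linear combinations of cross products of the six normals.

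The computational heart is the elementary $\R^3$-identity $[A\times B,\,C\times D,\,E\times F]=[A,B,E]\,[C,D,F]-[A,B,F]\,[C,D,E]$. Applying it to each of the four resulting terms, all summands carrying a repeated normal drop out at once, and substituting $n_{12}=n+a(n_2-n_1)$ and $n_{112}=n_1+\tilde a(n_{12}-n_{11})$ from \eqref{eq:moutard_minus} reduces every surviving bracket to a multiple of the two basic triple products $\alpha=[n,n_1,n_2]$ and $K=[n_{11},n_1,n_{12}]$. The key simplifications are $[n,n_1,n_{12}]=a\alpha$, $[n_{112},n_{11},n_1]=\tilde a K$, $[n_2,n,n_{12}]=-a\alpha$ and $[n_{112},n_{11},n_{12}]=-K$, whence the whole triple product collapses to
\[
[\,p-x_2,\ q-x_2,\ x_5-x_2\,]=\alpha\,K\,\big[\,a\tilde a\,t(1-s)-s(1-t)\,\big].
\]
Under the genericity assumption the prefactor $\alpha K$ is nonzero, so coplanarity is equivalent to $a\tilde a=\tfrac{s(1-t)}{t(1-s)}$. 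Since $\tfrac{s}{1-s}=\tfrac{\rho_6}{\rho_1}$ and $\tfrac{1-t}{t}=\tfrac{\rho_3}{\rho_4}$, the right-hand side equals $\tfrac{\rho_3\rho_6}{\rho_1\rho_4}$, which is exactly \eqref{eq:local_c1_rho}.

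I expect the main obstacle to be the bookkeeping in the triple-product expansion: one must fix a consistent orientation convention for the Lelieuvre normals and for the Moutard coefficient as drawn in Fig.~\ref{fig:local_c1_rho}, and verify that the four surviving terms really do share the common factor $\alpha K$ with the signs that produce the clean bracket $a\tilde a\,t(1-s)-s(1-t)$. It is this factorization — rather than the reduction to a triple product or the final passage to $\rho$-ratios — that carries the content of the lemma and is the step most sensitive to sign errors. A secondary point to check is that the labeled $a,\tilde a$ in the figure match $a^{12}$ and $a^{12}_1$ up to the sign and reciprocal ambiguities of Fig.~\ref{fig:moutard_coeff}, so that $a\tilde a$ is indeed the advertised parallel invariant.
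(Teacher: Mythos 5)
Your proof is correct: I checked the expansion and the factorization
\begin{equation*}
[\,p-x_2,\ q-x_2,\ x_5-x_2\,]=\alpha K\bigl(a\tilde a\,t(1-s)-s(1-t)\bigr)
\end{equation*}
does come out exactly as you claim, and the prefactor is nonvanishing under the genericity assumption since $[x_2-x_1,\,x_6-x_1,\,x_5-x_2]=-a\alpha^2$ and the analogous determinant for the right quadrilateral equals $-\tilde a K^2$, so skewness forces $\alpha\neq0\neq K$. Your identification of $a,\tilde a$ with $a^{12},a^{12}_1$ also matches the relations $n_5-n_1=a(n_6-n_2)$ and $n_4-n_2=\tilde a(n_5-n_3)$ on which the paper's argument is based, and the product is the parallel invariant by Remark~\ref{rem:parallel_invariants_aij}. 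The route is the same in substance --- Lelieuvre representation of the edges, Moutard equation, reduction to scalar triple products of normals --- but organized differently. The paper never forms the $3\times3$ determinant: it parametrizes the pencil of planes through the common edge $[x_2,x_5]$ by normals $m=\mu n_2+\nu n_5$ built from the two Lelieuvre normals at the shared vertices, shows that containment of $p$ forces $m\sim\rho_6 n_5-\rho_1 a\,n_2$, obtains $m\sim\tilde a\rho_4 n_5-\rho_3 n_2$ for $q$ by symmetry, and equates the two ratios. That splitting treats $p$ and $q$ independently, halves the computation, avoids the identity $[A\times B,C\times D,E\times F]=[A,B,E][C,D,F]-[A,B,F][C,D,E]$ altogether, and yields the common tangent plane at the shared cross vertex as a byproduct. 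Your single-determinant version is more mechanical and self-contained, but, as you anticipate yourself, it concentrates all the sign bookkeeping in one four-term expansion; since that expansion checks out, there is no gap.
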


\begin{figure}[htb]
\begin{center}
 \input{ 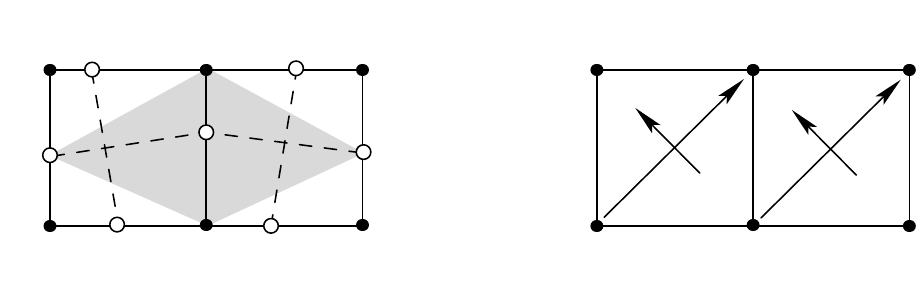_t } 
\end{center}
\caption{
Left: Two crosses satisfy the $C^1$-condition if and only if the points
$p,x_2,q,x_5$ are coplanar.  Right: Moutard coefficients $a$ and $\tilde a$ are
related by a parallel transport and yield the parallel invariant $a \tilde a$.
}
\label{fig:local_c1_rho}
\end{figure}

\begin{remark}
Lemma~\ref{lem:local_c1_rho} relates the $C^1$-condition to the parallel
invariant $a \tilde a$ that is depicted in Fig.~\ref{fig:local_c1_rho}.
Considering  Moutard coefficients which are obtained from $a$ and $\tilde a$ by
interchanging the long and the short diagonals yields the reciprocal parallel
invariant $b \tilde b = (a \tilde a)^{-1}$ (cf.  Fig.~\ref{fig:moutard_coeff}).
Thus, in terms of the parallel invariant $b \tilde b$,
relation~\eqref{eq:local_c1_rho} adopts the form
\begin{equation*}
\frac{\rho_1 \rho_4}{\rho_3 \rho_6} = b \tilde b.
\end{equation*}
\label{rem:local_c1_rho_second_invariant}
\end{remark}

\begin{titleproof}{Proof of Lemma~\ref{lem:local_c1_rho}}
The Moutard coefficients $a$ and $\tilde a$ belong to a certain Lelieuvre representation
$n$, where
\begin{equation*}
n_5 - n_1 = a (n_6 - n_2),
\quad
n_4 - n_2 = \tilde a (n_5 - n_3).
\end{equation*}
The parallel invariant $a\tilde a$, in turn, is independent of the chosen representation
(cf. Section~\ref{subsec:a-surfaces}).

Any plane that contains the edge $[x_2,x_5]$ has a normal vector
$m$ of the form
\begin{equation*}
m = \mu n_2 + \nu n_5
	= \mu n_2 + \nu (n_1 + a (n_6 - n_2))
\iff
\langle m , x_5 - x_2 \rangle = 0
\end{equation*}
and such a plane additionally contains $p$ and $q$ if and only if 
\begin{equation}
\langle m , x_2 - p \rangle = 0 = \langle m , x_2 - q \rangle.
\label{eq:pq_in_plane}
\end{equation}
The edges of the quadrilaterals can be written as cross-products of the Lelieuvre normals
\begin{equation*}
x_2 - x_1 = n_2 \times n_1,
\quad
x_6 - x_1 = n_6 \times n_1,
\quad \text{etc.}
\end{equation*}
Therefore, the vector $x_2 - p$ may be expressed as
\begin{equation*}
x_2 - p = x_2 - x_1 - \alpha (x_6 - x_1) = n_2 \times n_1 - \alpha n_6 \times n_1,
\quad
\alpha = \frac{\rho_6}{\rho_1 + \rho_6}.
\end{equation*}
We have
\begin{eqnarray*}
\langle m , x_2 - p \rangle 
	&=& \langle \mu n_2 + \nu (n_1 + a (n_6 - n_2)) , n_2 \times n_1 - \alpha n_6 \times n_1 \rangle \\
	&=&
	\alpha (\nu a - \mu) \langle n_2 , n_6 \times n_1 \rangle
	+
	\nu a \langle n_6, n_2 \times n_1 \rangle
\end{eqnarray*}
and therefore
\begin{equation*}
\langle m , x_2 - p \rangle = 0
\iff
\nu a + \alpha (\mu - \nu a) = 0
\end{equation*}
which yields
\begin{equation*}
\mu = \frac{\alpha - 1}{\alpha} \nu a = - \frac{\rho_1}{\rho_6} \nu a.
\end{equation*}
Accordingly,
a plane through $\inc{x_2,x_5}$ with normal $m = \mu n_2 + \nu n_5$ contains the point $p$
if and only if
\begin{equation*}
m \sim \rho_6 n_5 - \rho_1 a n_2.
\end{equation*}
For reasons of symmetry, the same holds with respect to $q$ if and only if
\begin{equation*}
m \sim \tilde a \rho_4 n_5 - \rho_3 n_2.
\end{equation*}
Thus, there exists a plane through $\inc{x_2,x_5}$ that contains both $p$ and $q$
if and only if
\begin{equation*}
a \tilde a = \frac{\rho_3 \rho_6}{\rho_1 \rho_4}.
\end{equation*}
\end{titleproof}

We can use Lemmas~\ref{lem:cross_as_rhos} and \ref{lem:local_c1_rho} to
characterize those crisscrossed A-nets that are (pre-)hyperbolic nets.
\begin{proposition}
A crisscrossed A-surface $(x,\rho)$ constitutes a pre-hyperbolic net if, for
any two edge-adjacent quadrilaterals in the coordinate free notation of
Fig.~\ref{fig:local_c1_rho}, the function $\rho$ at vertices of $x$ satisfies
condition \eqref{eq:local_c1_rho}. If, additionally, $\rho$ is strictly positive
or strictly negative, all crosses are internal and therefore describe adapted
hyperboloid patches that form a hyperbolic net.
\label{prop:prehyp_net_in_terms_of_rho}
\end{proposition}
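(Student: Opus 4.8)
The plan is to obtain the statement as a synthesis of Lemmas~\ref{lem:cross_as_rhos}, \ref{lem:local_c1} and \ref{lem:local_c1_rho}. The strategy is to show that the algebraic relation \eqref{eq:local_c1_rho}, imposed on every pair of edge-adjacent quadrilaterals, is nothing but the $C^1$-condition imposed along every interior edge, and then to read off from the sign of $\rho$ whether the resulting adapted hyperboloids restrict to patches.

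First I would note that, since $\rho$ is a single function defined at the vertices of $x$, two edge-adjacent quadrilaterals automatically share the cross vertex $y = \tfrac{\rho_2 x_2 + \rho_5 x_5}{\rho_2 + \rho_5}$ on their common extended edge $\inc{x_2,x_5}$; hence $(x,\rho)$ is a genuine crisscrossed A-surface and, by Lemma~\ref{lem:cross_as_rhos}, each elementary quadrilateral carries a well-defined adapted hyperboloid. Fixing one pair of edge-adjacent quadrilaterals in the notation of Fig.~\ref{fig:local_c1_rho}, Lemma~\ref{lem:local_c1_rho} converts \eqref{eq:local_c1_rho} into the coplanarity of the four points $p, x_2, q, x_5$, while Lemma~\ref{lem:local_c1} identifies coplanarity along $\inc{x_2,x_5}$ with tangency of the two adapted hyperboloids along this common asymptotic line, i.e. the $C^1$-condition.

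The one point requiring care, and the only genuine obstacle, is the passage between the two coplanarity statements: Lemma~\ref{lem:local_c1} is phrased using the two cross centres together with the shared vertices $x_2, x_5$, whereas Lemma~\ref{lem:local_c1_rho} uses the opposite cross vertices $p$ and $q$. These are equivalent because each cross centre lies on the crisscrossing ruling through the shared cross vertex $y\in\inc{x_2,x_5}$ and the corresponding opposite cross vertex; consequently the tangent plane $\inc{x_2,x_5,\,\text{centre}}$ coincides with $\inc{x_2,x_5,p}$, respectively $\inc{x_2,x_5,q}$, and the two coplanarity conditions agree. Since \eqref{eq:local_c1_rho} is assumed for every pair of edge-adjacent quadrilaterals, the $C^1$-condition then holds across every interior edge, which is exactly the defining property of a pre-hyperbolic net.

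For the final claim I would invoke the sign statement of Lemma~\ref{lem:cross_as_rhos}: if $\rho$ is everywhere strictly positive or strictly negative, then at each quadrilateral the four relevant values $\rho_i$ share a common sign, so the cross is internal and the adapted hyperboloid restricts to a patch bounded by that quadrilateral. Each interior edge is then a boundary asymptotic line of both adjacent patches, and the already established $C^1$-condition guarantees that these patches meet with coinciding tangent planes along it, so by definition they assemble into a hyperbolic net. Beyond the coplanarity reconciliation above I expect no serious difficulty; the remaining work is the bookkeeping verification that a single vertex function $\rho$ produces consistent shared cross vertices and that ranging over all edge-adjacent pairs exhausts the interior edges of the support.
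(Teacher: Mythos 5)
Your proof is correct and follows the paper's own (implicit) argument: the proposition is stated there as an immediate consequence of Lemmas~\ref{lem:cross_as_rhos} and \ref{lem:local_c1_rho}, which is exactly the synthesis you carry out, including the observation that a single vertex function $\rho$ automatically produces the shared cross vertex on each common extended edge. The only quibble is that your ``reconciliation'' step addresses a non-issue --- in Lemma~\ref{lem:local_c1} the points $a$ and $c$ are already the opposite cross vertices (the counterparts of $p$ and $q$), not the cross centres --- though your argument that either choice of point on the crossing ruling through $y$ spans the same tangent plane with $\inc{x_2,x_5}$ is in any case valid.
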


\paragraph{Geometric interpretation of parallel invariants.}
Lemma~\ref{lem:local_c1_rho} gives rise to the following geometric interpretation of
parallel invariants $a \tilde a$. With respect to the notation of
Fig.~\ref{fig:invariant_as_length_ratios}, we introduce the ratios of oriented
lengths
\begin{equation}
l_p = \frac{l(x_1,p)}{l(p,x_6)},
l_r = \frac{l(x_1,r)}{l(r,x_3)},
l_q = \frac{l(x_4,q)}{l(q,x_3)},
l_s = \frac{l(x_4,s)}{l(s,x_6)},
\label{eq:length_ratios_associated_with_arrows}
\end{equation}
which are represented in Fig.~\ref{fig:invariant_as_length_ratios} by the four arrows.
Reversing the direction of an arrow corresponds to taking the inverse of the
associated ratio.

\begin{figure}[htb]
\begin{center}
 \input{ 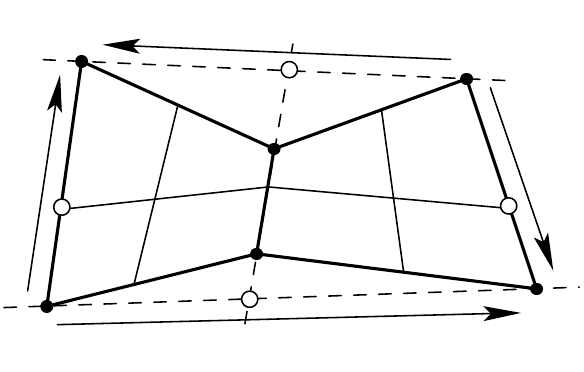_t } 
\end{center}
\caption{Planar vertex stars imply that the line $\inc{x_2,x_5}$ intersects each of the
lines $\inc{x_1,x_3}$ and $\inc{x_4,x_6}$, which yields intersection points $r$ and $s$.}
\label{fig:invariant_as_length_ratios}
\end{figure}

Now, knowing that $p,q,r,s$ are coplanar, we can apply the generalized Menelaus Theorem and obtain 
\begin{equation}
l_p l_s^{-1} l_q l_r^{-1} = 1,
\label{eq:menelaus_c1}
\end{equation}
which yields
\begin{equation*}
a \tilde a = \frac{\rho_6}{\rho_1} \frac{\rho_3}{\rho_4} = l_p l_q = l_r l_s.
\end{equation*}
While the product $l_p l_q$ refers to the additional structure provided by the crosses,
the relation $a \tilde a = l_r l_s$ refers to the geometry of the underlying A-net only.
In particular, the parallel invariant $a \tilde a$ is positive if and only
if either both or none of the points $r,s$ are contained in the line segments $[x_1,x_3]$
and $[x_4,x_6]$ respectively.

\paragraph{Internal crosses that satisfy the $C^1$-condition.}
Lemma~\ref{lem:local_c1_rho} reveals in which case edge-adjacent skew
quadrilaterals can be equipped with internal crosses that satisfy the
$C^1$-condition. In the notation of Fig.~\ref{fig:local_c1_rho}, we assume
positive initial data $\rho_1,\rho_2,\rho_3,\rho_5,\rho_6$, which describe an
internal cross for the quadrilateral $(x_1,x_2,x_5,x_6)$ as well as an internal
cross vertex on the edge $[x_2,x_3]$. According to \eqref{eq:local_c1_rho}, the
value $\rho_4$ is then obtained as $\rho_4 = \rho_3 \rho_6/\rho_1 a
\tilde{a}$ and the resulting cross for the quadrilateral $(x_2,x_3,x_4,x_5)$ is
internal if and only if $\rho_4 > 0$, i.e., if and only if the parallel
invariant $a \tilde a$ is positive.  In
\cite{Huhnen-VenedeyRoerig:2013:hyperbolicNets} it was proven that it is
possible to equip two adjacent skew quadrilaterals of an A-net with adapted
hyperboloid patches that satisfy the $C^1$-condition if and only if the
quadrilaterals are equi-twisted (cf.  Section~\ref{subsec:previous_results}).
Thus, positivity of parallel invariants is an algebraic description of
equi-twist. Hence, we have established

\begin{lemma}
The propagation of cross vertices described in
Remark~\ref{rem:local_c1_projection} maps internal cross vertices to internal
cross vertices if and only if the quadrilaterals in question are equi-twisted,
which, in turn, is equivalent to positivity of the corresponding parallel
invariants.
\label{lem:propagation_of_internal_vertices}
\end{lemma}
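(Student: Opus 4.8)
The plan is to formalize the algebraic computation sketched in the paragraph preceding the statement and then to combine it with the equi-twist characterization recalled in Section~\ref{subsec:previous_results} from \cite{Huhnen-VenedeyRoerig:2013:hyperbolicNets}. I would work throughout in the coordinate-free notation of Fig.~\ref{fig:local_c1_rho}, where the first quadrilateral $(x_1,x_2,x_5,x_6)$ carries the cross vertex $p$ on the edge $[x_1,x_6]$, the edge-adjacent quadrilateral $(x_2,x_3,x_4,x_5)$ carries the propagated cross vertex $q$ on $[x_3,x_4]$, and the two crosses share their cross vertex on the common edge $[x_2,x_5]$. The propagation of Remark~\ref{rem:local_c1_projection} is precisely the passage $p \mapsto q$ enforced by the $C^1$-condition.

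First, I would record from Lemma~\ref{lem:cross_as_rhos} that a cross (vertex) is internal exactly when the relevant scalars $\rho_i$ share a common sign. Using the homogeneous freedom $\rho_i \mapsto \alpha\rho_i$, I may normalize the data so that $\rho_1,\rho_2,\rho_3,\rho_5,\rho_6 > 0$; by the sign criterion this simultaneously makes the cross on the first quadrilateral $(x_1,x_2,x_5,x_6)$ internal and renders the shared cross vertex on the common edge internal as well.

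Next, I would invoke Lemma~\ref{lem:local_c1_rho}, which identifies the $C^1$-condition with relation \eqref{eq:local_c1_rho}, namely $\rho_3\rho_6/(\rho_1\rho_4) = a\tilde a$. Solving for the single remaining scalar yields $\rho_4 = \rho_3\rho_6/(\rho_1\,a\tilde a)$, and since $\rho_1,\rho_3,\rho_6 > 0$ the sign of $\rho_4$ coincides with that of the parallel invariant $a\tilde a$. The propagated cross on $(x_2,x_3,x_4,x_5)$ is internal iff its four scalars $\rho_2,\rho_3,\rho_4,\rho_5$ share a sign; as the first three are already positive, this reduces to $\rho_4 > 0$. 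Hence the propagation carries internal cross vertices to internal cross vertices if and only if $a\tilde a > 0$, which establishes the second of the two claimed equivalences.

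Finally, to close the chain I would identify internal crosses with adapted hyperboloid patches (restrictable hyperboloids) and invoke the result recalled in Section~\ref{subsec:previous_results}: two edge-adjacent skew quadrilaterals of an A-net admit $C^1$-joined adapted hyperboloid \emph{patches} if and only if they are equi-twisted. Thus internal-to-internal propagation is possible exactly when the quadrilaterals are equi-twisted, and combining this with the preceding step yields the full chain \emph{equi-twisted} $\iff a\tilde a > 0$ $\iff$ \emph{internal propagation}. The only genuinely external ingredient is the equi-twist characterization of \cite{Huhnen-VenedeyRoerig:2013:hyperbolicNets}; the self-contained content is the sign-tracking of $\rho_4$, and the main point requiring care is the interplay between the homogeneous normalization of the $\rho_i$ and the standing hypothesis that the shared cross vertex is internal, so that the reduction ``new cross internal $\iff \rho_4 > 0$'' is legitimate.
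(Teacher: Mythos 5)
Your argument is correct and coincides with the paper's own reasoning: the paper likewise normalizes $\rho_1,\rho_2,\rho_3,\rho_5,\rho_6>0$, solves \eqref{eq:local_c1_rho} for $\rho_4=\rho_3\rho_6/(\rho_1\,a\tilde a)$ so that internality of the propagated cross reduces to $a\tilde a>0$, and then cites the equi-twist characterization of \cite{Huhnen-VenedeyRoerig:2013:hyperbolicNets} to close the chain of equivalences. The only cosmetic difference is that the paper phrases the extra initial datum as the internal cross vertex on $[x_2,x_3]$ (i.e., the positivity of $\rho_3$), which you absorb into your normalization in the same way.
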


\paragraph{Extension of A-surfaces to pre-hyperbolic nets.}
Locally, it is always possible to propagate a hyperboloid adapted to a skew
quadrilateral via the $C^1$-condition to an edge-adjacent quadrilateral.
If we try to extend an entire A-surface to a pre-hyperbolic net then the question
arises as to whether the propagation is consistent, i.e., path-independent.  To answer
this question, one has to examine whether the propagation along closed cycles
composed of edge-adjacent quadrilaterals is consistent.  If we restrict
ourselves to simply connected A-surfaces, a basis for those cycles is given by
the elementary cycles of quadrilaterals around single vertices and
it is sufficient to investigate whether the
propagation of hyperboloids around inner vertices of an A-surface is
consistent.  In \cite{Huhnen-VenedeyRoerig:2013:hyperbolicNets}, this was
done in terms of Pl\"ucker geometry and it was shown that the propagation
around a vertex is consistent if and only if the vertex is of even degree.
In the following, we will give an algebraic and a geometric proof of the
corresponding consistency statement for a regular vertex of degree four in the
setting of crisscrossed quadrilaterals.

\begin{lemma}
Let $Q_1,\dots,Q_4$ be four quadrilaterals of a crisscrossed A-net around a
vertex of degree four as in Fig.~\ref{fig:menelaus_consistency}.  If the 
$C^1$-condition is satisfied at three interior edges then it is also satisfied
at the fourth interior edge.
\label{lem:4_cc_quads_consistency}
\end{lemma}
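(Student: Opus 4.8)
The plan is to reduce the statement to two elementary algebraic identities by applying Lemma~\ref{lem:local_c1_rho} along each of the four interior edges. First I would fix local notation as in Fig.~\ref{fig:menelaus_consistency}: write $x_0$ for the central vertex, label its four neighbours and the four ``diagonal'' vertices of $Q_1,\dots,Q_4$, and let $E_1,\dots,E_4$ be the four interior edges issuing from $x_0$. Each $E_k$ is the common edge of exactly two of the $Q_i$, so Lemma~\ref{lem:local_c1_rho} rephrases the $C^1$-condition along $E_k$ as a single scalar equation $R_k = I_k$. Here $R_k = \rho_3\rho_6/(\rho_1\rho_4)$ is the ratio of the weights at the four vertices of the two adjacent quadrilaterals other than the two endpoints of $E_k$, and $I_k = a\tilde a$ is the parallel invariant of the pair of quadrilaterals meeting along $E_k$. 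The one point requiring care is to orient the four applications of the lemma consistently, so that the equations $R_k = I_k$ are all written in the same convention.

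Next I would group the four interior edges into the two opposite pairs, say $\{E_1,E_3\}$ and $\{E_2,E_4\}$, and prove two identities. On the weight side, forming the product $R_1 R_3$ over a pair of opposite edges is a purely formal cancellation: the weights at the four neighbours of $x_0$ drop out and one is left with the same ratio of the four diagonal weights, whichever opposite pair is used; hence
\begin{equation*}
R_1 R_3 = R_2 R_4
\end{equation*}
identically in $\rho$. On the invariant side I claim the analogous identity $I_1 I_3 = I_2 I_4$ holds for the underlying A-net alone. Indeed, by Remark~\ref{rem:parallel_invariants_aij} each $I_k$ is, up to the sign and reciprocal conventions of Fig.~\ref{fig:moutard_coeff}, the product of the Moutard coefficients of the two quadrilaterals adjacent to $E_k$; since a pair of opposite interior edges is adjacent to each of $Q_1,\dots,Q_4$ exactly once, both opposite-pair products equal the same product of Moutard coefficients taken over all four quadrilaterals.

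With both identities at hand the lemma follows at once. After relabelling we may assume the $C^1$-condition is known along $E_1,E_3,E_4$ and is to be deduced along $E_2$. Then
\begin{equation*}
R_2 R_4 = R_1 R_3 = I_1 I_3 = I_2 I_4,
\end{equation*}
where the outer equalities are the two structural identities and the middle one uses the assumed conditions $R_1 = I_1$ and $R_3 = I_3$. Since $R_4 = I_4 \neq 0$ by the condition along $E_4$, cancellation gives $R_2 = I_2$, i.e.\ the $C^1$-condition along $E_2$. I expect the main obstacle to be the bookkeeping behind $I_1 I_3 = I_2 I_4$: one has to convert the invariants $a\tilde a$ delivered by the four separate applications of Lemma~\ref{lem:local_c1_rho} into the standard Moutard coefficients of the four quadrilaterals, tracking the signs and reciprocals of Fig.~\ref{fig:moutard_coeff}, and check that the orientation conventions chosen in the first step make both structural identities refer to the \emph{same} normalisation. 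A geometric alternative would bypass the Moutard coefficients altogether, propagating the outer cross vertices around $x_0$ via the projection of Remark~\ref{rem:local_c1_projection} and closing up with a Menelaus-type argument; but the algebraic route is the most direct given Lemma~\ref{lem:local_c1_rho}.
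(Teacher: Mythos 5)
Your proposal is correct and is essentially the paper's own algebraic proof of this lemma: rewriting the four $C^1$-conditions via Lemma~\ref{lem:local_c1_rho} and checking the two structural identities $R_1R_3=R_2R_4$ and $I_1I_3=I_2I_4$ (with consistently oriented conventions, as you flag) is exactly the compatibility check $(\rho_{112})_2=(\rho_{122})_1$ for the evolution equations \eqref{eq:rho_evolution_2d} carried out in the paper. The paper also records a second, geometric proof via the generalized Menelaus theorem, which is the alternative route you mention but do not pursue.
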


\begin{figure}[htb]
\begin{center}
\includegraphics[scale=.37]{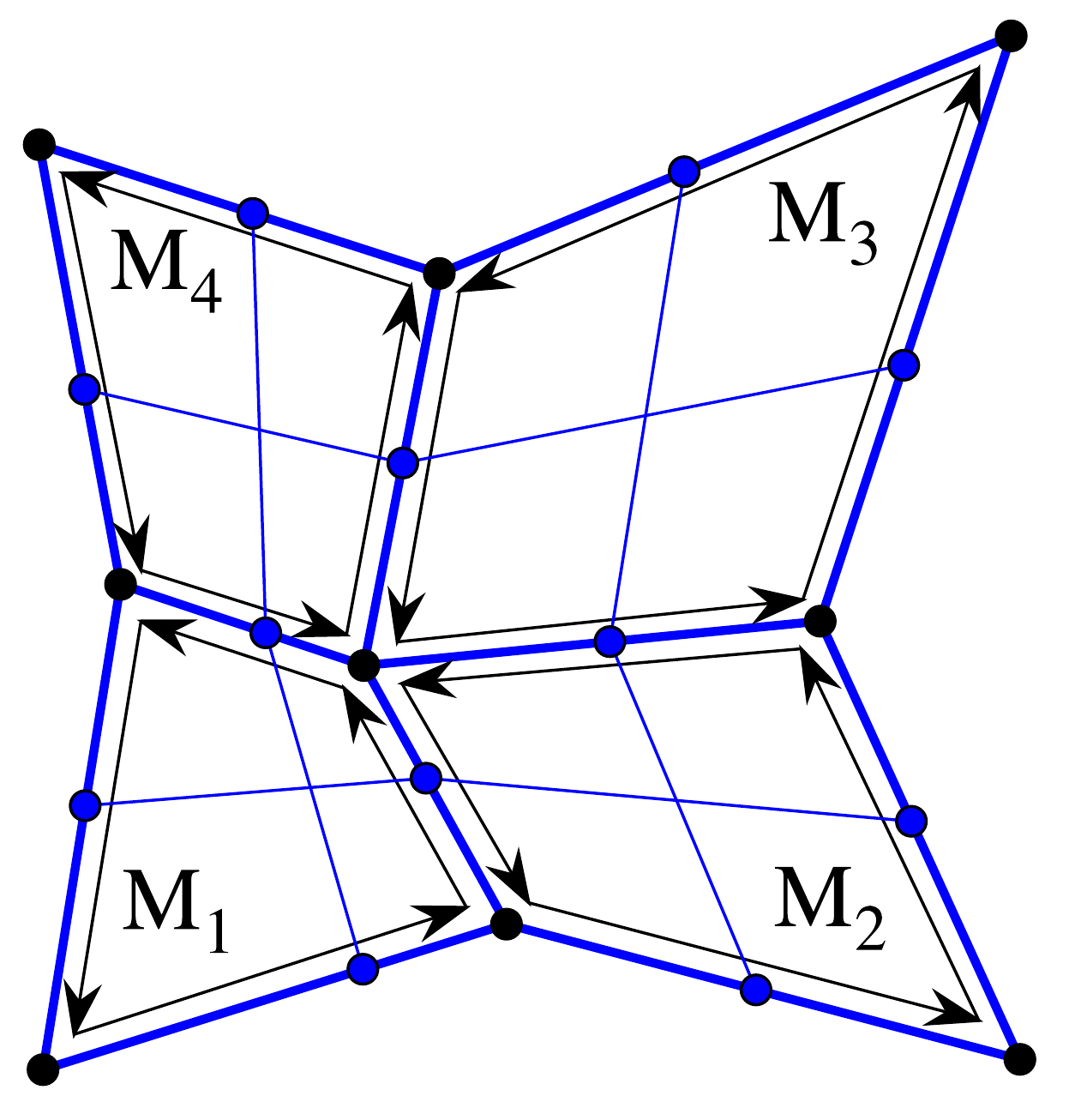}
\hspace{.2cm}
\includegraphics[scale=.37]{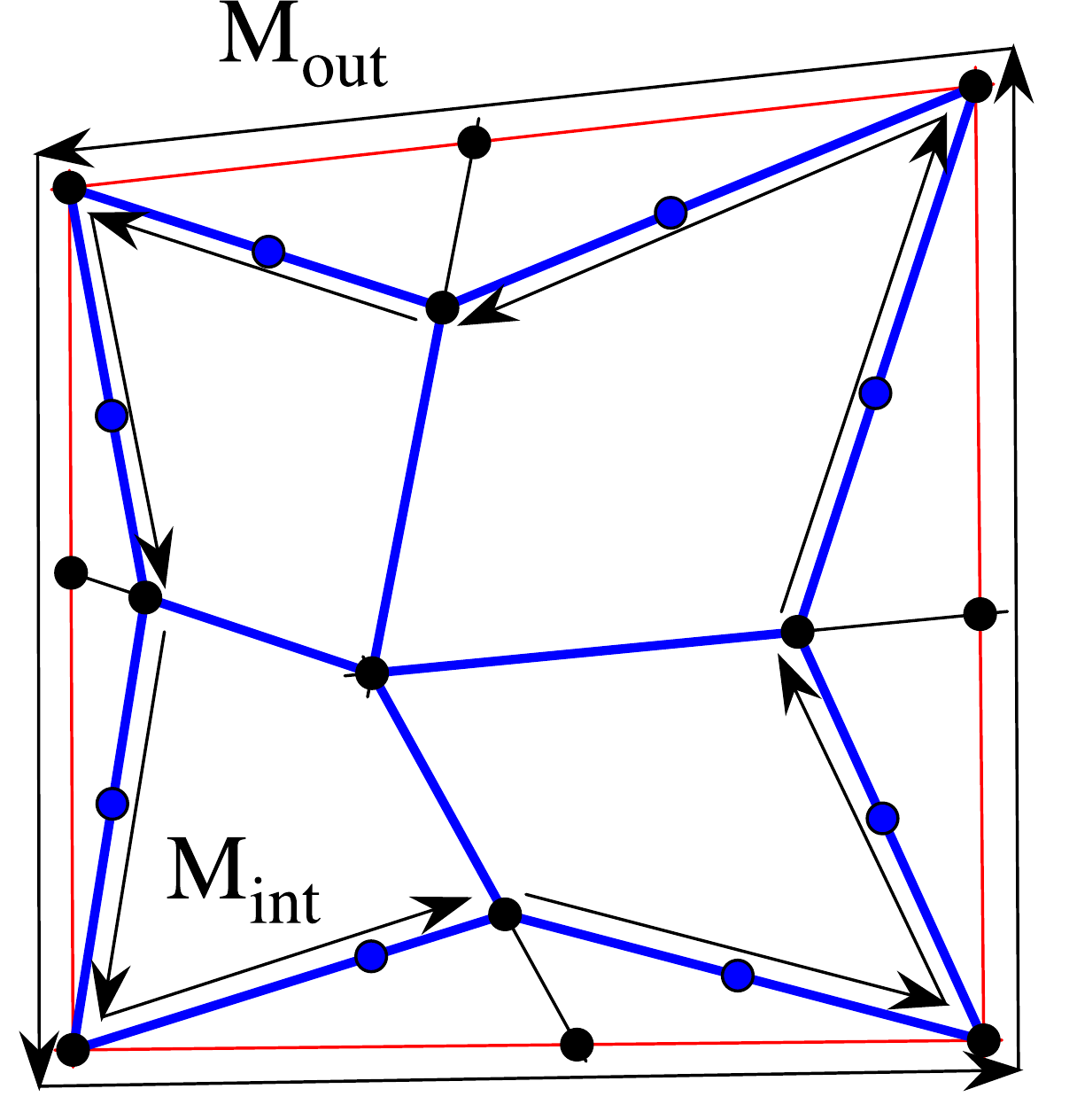}
\hspace{.2cm}
\includegraphics[scale=.37]{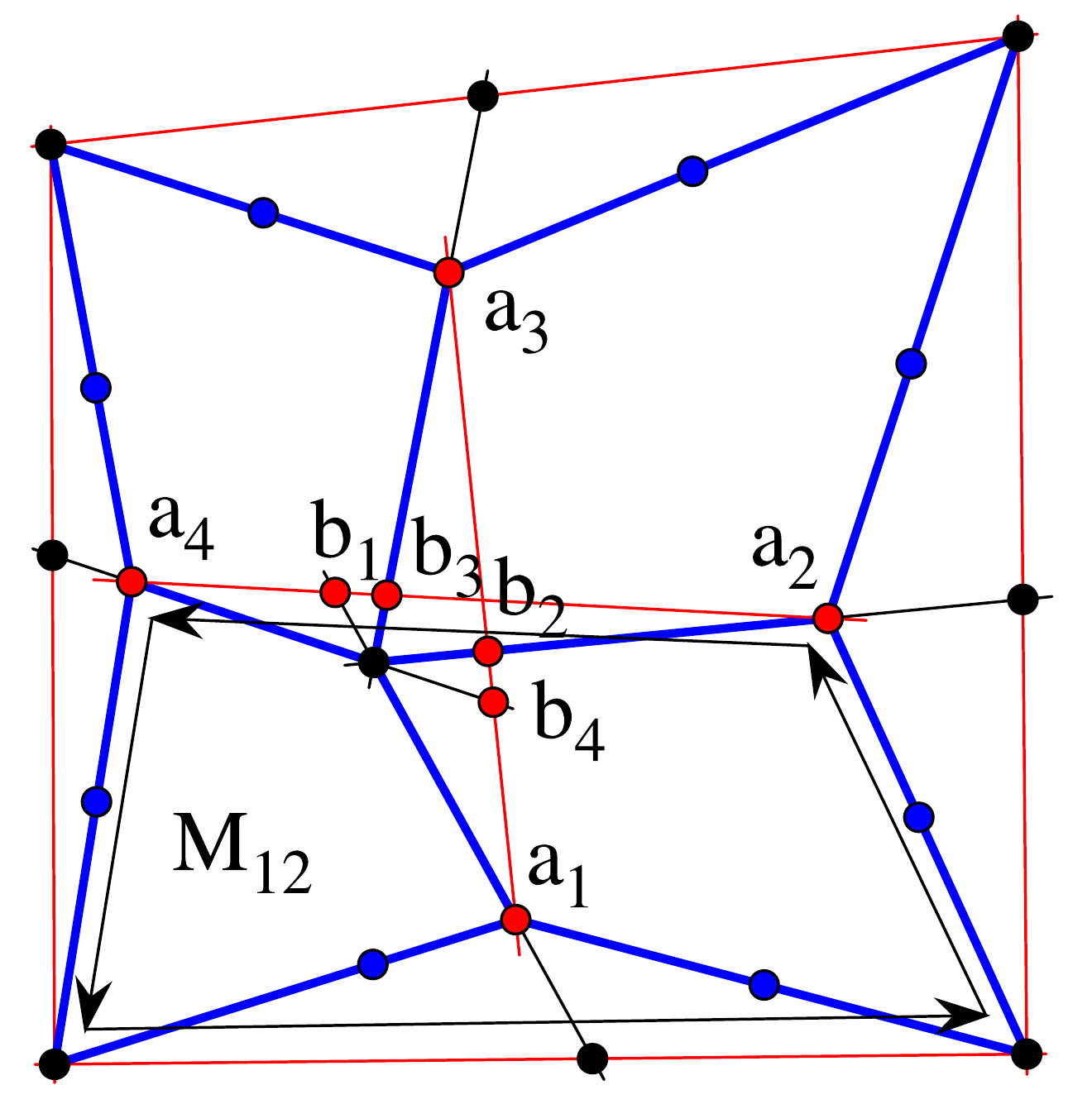}
\end{center}
\caption{Four crisscrossed quadrilaterals around a regular vertex of degree
four.  Arrows represent ratios of oriented lengths. The multiratios $M_\ast$
arise as the depicted products of ratios associated with arrows that form
oriented polygons.}
\label{fig:menelaus_consistency}
\end{figure}

\begin{titleproof}{Geometric proof of Lemma~\ref{lem:4_cc_quads_consistency}}
We will use the generalized Menelaus Theorem (Theorem~\ref{thm:menelaus})
several times.  The ratios involved are indicated by arrows in
Fig.~\ref{fig:menelaus_consistency}, analogous to the arrows in
Fig.~\ref{fig:invariant_as_length_ratios} representing the ratios
\eqref{eq:length_ratios_associated_with_arrows}. We obtain several relevant
multiratios $M_\ast$ (as defined in Theorem~\ref{thm:menelaus}) as products of
ratios that are associated with arrows that form closed polygons as depicted in
Fig.~\ref{fig:menelaus_consistency}. 

Reformulation of Lemma~\ref{lem:local_c1} using Menelaus' theorem
yields the following.  Crosses adapted to edge-adjacent quadrilaterals $i$ and
$j$ satisfy the $C^1$-condition if and only if the corresponding multiratio
$M_{ij} = 1$, where $M_{ij}$ involves those points that correspond
to $p,r,q,s$ in Fig.~\ref{fig:invariant_as_length_ratios}.
(In the context of Fig.~\ref{fig:invariant_as_length_ratios}, $M_{ij}=1$ corresponds
to \eqref{eq:menelaus_c1}.)
We will now show that
\begin{equation*}
M_{12} M_{23} M_{34} M_{41} = 1,
\end{equation*}
which proves the lemma. If we
regroup the factors of the multiratios $M_{ij}$ then we obtain
\begin{equation*}
M_{12} M_{23} M_{34} M_{41} 
	= M_{int} M_{out} 
	\frac{l(a_2,b_1)}{l(b_1,a_4)}
	\frac{l(a_3,b_2)}{l(b_2,a_1)}
	\frac{l(a_4,b_3)}{l(b_3,a_2)}
	\frac{l(a_1,b_4)}{l(b_4,a_3)}.
\end{equation*}
Since the vertices of a cross are coplanar, the generalized Menelaus Theorem
gives $M_1 = M_2 = M_3 = M_4 = 1$ and, hence,
\begin{equation*}
M_{int} = M_1 M_2 M_3 M_4 = 1.
\end{equation*}
Moreover, planarity of the vertex star of the central vertex immediately yields
$M_{out} = 1$.
It remains to show that 
\begin{equation*}
	\frac{l(a_2,b_1)}{l(b_1,a_4)}
	\frac{l(a_3,b_2)}{l(b_2,a_1)}
	\frac{l(a_4,b_3)}{l(b_3,a_2)}
	\frac{l(a_1,b_4)}{l(b_4,a_3)} = 1
	\iff
	\crr(a_2,b_1,a_4,b_3) = \crr(b_2,a_1,b_4,a_3).
\end{equation*}
This assertion is indeed true since the two quadruples of points are related by a
projection through the central vertex of the four quadrilaterals.
\end{titleproof}

\begin{titleproof}{Algebraic proof of Lemma~\ref{lem:4_cc_quads_consistency}}
According to Lemma~\ref{lem:local_c1_rho} and
Remark~\ref{rem:local_c1_rho_second_invariant}, imposing the 
$C^1$-condition on a crisscrossed A-surface $(x,\rho) : \Z^2 \to \R^3 \times
\R$ means requiring
\begin{equation*}
\frac{\rho_{11} \rho_2}{\rho \rho_{112}} = a^{12} a^{12}_1,
\qquad
\frac{\rho_{22} \rho_1}{\rho \rho_{122}} = a^{12} a^{12}_2,
\end{equation*}
which is equivalent to
\begin{equation}
\rho_{112} = \frac{\rho_{11} \rho_2}{\rho a^{12} a^{12}_1},
\qquad
\rho_{122} = \frac{\rho_{22} \rho_1}{\rho a^{12} a^{12}_2}.
\label{eq:rho_evolution_2d}
\end{equation}
It is straight forward to verify that the evolution equations
\eqref{eq:rho_evolution_2d} are compatible, i.e.,
\begin{equation}
(\rho_{112})_2 = (\rho_{122})_1
\label{eq:rho_consistency_2d}
\end{equation}
modulo \eqref{eq:rho_evolution_2d}, which proves the lemma.
\end{titleproof}

Lemmas~\ref{lem:propagation_of_internal_vertices} and \ref{lem:4_cc_quads_consistency}
imply

\begin{proposition}
A discrete A-surface $x : \Z^2 \to \R^3$ can be equipped with internal crosses
that satisfy the $C^1$-condition, i.e., it can be extended to a hyperbolic net,
if and only if all parallel invariants are positive.
\label{prop:hypnet_extension_positive_invariants}
\end{proposition}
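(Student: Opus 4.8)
The plan is to prove both implications by assembling the two preceding lemmas, treating the extension of $x$ to a hyperbolic net as a propagation of internal crosses that is governed at each interior edge by the $C^1$-condition.

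First I would settle the ``only if'' direction. Assume $x$ is extended to a hyperbolic net, so that every elementary quadrilateral carries an internal cross and every pair of edge-adjacent quadrilaterals satisfies the $C^1$-condition. Fix such a pair. By Remark~\ref{rem:local_c1_projection} the $C^1$-condition is exactly the projection that transports the cross vertex on one extended edge through the common asymptotic line; since both crosses are internal, this propagation maps an internal cross vertex to an internal cross vertex. Lemma~\ref{lem:propagation_of_internal_vertices} then forces the corresponding parallel invariant to be positive. As the pair was arbitrary, every parallel invariant of $x$ is positive. (Recall that the two parallel invariants attached to a pair of edge-adjacent quadrilaterals are reciprocal, cf.\ Remark~\ref{rem:local_c1_rho_second_invariant}, so their signs agree and the statement is unambiguous.)

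For the ``if'' direction I would construct the net by propagation. By Lemma~\ref{lem:cross_as_rhos}, choosing $\rho_1,\dots,\rho_4>0$ on one quadrilateral equips it with an internal cross, which serves as the initial datum. I then propagate to edge-adjacent quadrilaterals by imposing the $C^1$-condition, which by Lemma~\ref{lem:local_c1} uniquely determines the adjacent cross. Positivity of all parallel invariants together with Lemma~\ref{lem:propagation_of_internal_vertices} guarantees that each such step sends an internal cross to an internal cross, so internality is preserved along every sequence of edge-adjacent quadrilaterals.

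The decisive point is the global consistency of this propagation. Since the domain $\Z^2$ is simply connected, it suffices to check that the propagation closes up around each interior vertex, and every interior vertex of $\Z^2$ has degree four. This is precisely the content of Lemma~\ref{lem:4_cc_quads_consistency}: once the $C^1$-condition holds at three of the four interior edges around such a vertex, it holds automatically at the fourth, so the cross produced by going once around the vertex coincides with the one we started from. Hence the propagation is path-independent and yields a globally well-defined assignment of internal crosses satisfying the $C^1$-condition everywhere, i.e.\ a hyperbolic net. The step I expect to require the most care is this closure argument: one must verify that the local consistency supplied by Lemma~\ref{lem:4_cc_quads_consistency} at each degree-four vertex, combined with simple-connectivity, genuinely upgrades to global path-independence, and that \emph{internality}---not merely the $C^1$-relation---is respected all the way around each closed loop, which is exactly where positivity of every parallel invariant (including the fourth one supplied automatically by the closure) is needed.
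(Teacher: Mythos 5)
Your proposal is correct and follows essentially the same route as the paper, which derives the proposition directly from Lemma~\ref{lem:propagation_of_internal_vertices} (positivity of parallel invariants $\Leftrightarrow$ internality is preserved under $C^1$-propagation) together with Lemma~\ref{lem:4_cc_quads_consistency} and simple-connectivity for global path-independence. Your write-up merely makes explicit the assembly that the paper leaves implicit, so there is nothing to add.
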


\paragraph{Cauchy problems for hyperbolic and pre-hyperbolic nets.}
\label{par:cauchy_rho_2d}
Imposition of the $C^1$-condition on a crisscrossed A-surface $(x,\rho) : \Z^2
\to \R^3 \times \R$ yields the evolution equations \eqref{eq:rho_evolution_2d}
and sets up a 2-dimensional Cauchy problem for $\rho$ (see
Fig.~\ref{fig:cauchy_data_rho_2d}).  Given the supporting A-net, Cauchy data
for $\rho$ are obtained, for instance, by prescribing $\rho$ along coordinate
axes and on one suitable quadrilateral,
\begin{equation}
\rho(\mathcal{S}^i),\ i=1,2;
\quad \rho(\{0,1\}^2).
\label{eq:cauchy_data_rho_2d}
\end{equation}
Accordingly, Cauchy data for a pre-hyperbolic net comprise, for instance,
Cauchy data \eqref{eq:cauchy_data_a_surface} for the supporting A-net
supplemented by Cauchy data \eqref{eq:cauchy_data_rho_2d} for $\rho$.

\begin{figure}[htb]
\begin{center}
 \input{ 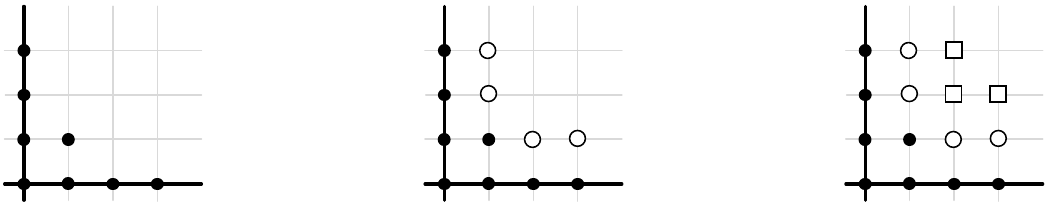_t } 
\end{center}
\caption{Cauchy problem for the extension of a given A-surface by crosses that
satisfy the $C^1$-condition. The values of $\rho$ at the black points
constitute the Cauchy data. The values of $\rho$ at the white points and
squares are determined by the evolution equations \eqref{eq:rho_evolution_2d}.
The uniqueness of $\rho$ at the white squares is due to
\eqref{eq:rho_consistency_2d}.}
\label{fig:cauchy_data_rho_2d}
\end{figure}

In the case of hyperbolic nets, we have to ensure that the supporting A-surface
is equi-twisted and that all crosses are internal.
The description of equi-twist as positivity of parallel invariants immediately
gives rise to the Cauchy problem for equi-twisted A-surfaces as a
specialization of the Cauchy problem \eqref{eq:cauchy_data_a_surface}.  For
instance, admissible Cauchy data for an equi-twisted A-surface $x : \Z^2 \to
\R^3$ are given by
\begin{equation}
n (\mathcal{S}^i),\ i = 1,2;
\quad
a^{12} (\Z^2) > 0;
\quad
x_0.
\label{eq:cauchy_data_et_a_surface}
\end{equation}
Without loss of generality, internal crosses are described by strictly positive
$\rho$.  Accordingly, Cauchy data for a hyperbolic net consists, for instance,
of Cauchy data \eqref{eq:cauchy_data_et_a_surface} for the supporting
equi-twisted A-surface combined with positive Cauchy data
\begin{equation}
\rho(\mathcal{S}^i) > 0,\ i=1,2;
\quad \rho(\{0,1\}^2) > 0
\label{eq:cauchy_data_rho_2d_internal}
\end{equation}
for $\rho$,
which is sufficient according to Lemma~\ref{lem:propagation_of_internal_vertices}.

\section{Weingarten transformations of hyperbolic nets}
\label{sec:weingarten_trafos}

Our goal is to establish a theory of Weingarten transformations of hyperbolic
nets that extends the notion of Weingarten transformations of discrete A-surfaces.
This means that for a Weingarten pair $f = (x,\rho)$ and $\tilde f = (\tilde
x,\tilde \rho)$ of hyperbolic nets, also the supporting A-nets $x$ and $\tilde
x$ should form a Weingarten pair. The task is to relate the patches
of $\tilde f$ to the patches of $f$ in a canoncial manner.  Ideally, we would
like to ensure that corresponding patches are classical Weingarten transforms of
each other and it turns out that this is indeed possible modulo certain equi-twist 
requirements.

As alluded to in Section~\ref{subsec:anets_md}, a generalization of discrete
A-surfaces to higher-dimensional A-nets may be obtained by imposing planarity
of vertex stars on every 2-dimensional layer of an $m$-dimensional lattice, $m
\ge 3$.  If we interpret the 2-dimensional layers as discrete A-surfaces then
higher-dimensional A-nets may be regarded as families of A-surfaces related by
discrete Weingarten transformations (cf.  Section~\ref{subsec:wtrafos_anets}).
Our idea is to transfer this approach to the setting of pre-hyperbolic nets,
i.e., to impose the $C^1$-condition on 2-dimensional layers of multidimensional
crisscrossed A-nets and, in this way, derive a notion of Weingarten transforms.
However, while this approach is shown to be consistent and
yields a class of B\"acklund transformations \cite{SchiefRogers:2002:BaecklundDarboux}
for pre-hyperbolic nets, it turns out to be too flexible for our purpose.
This may be resolved by imposing additional $C^1$-conditions which lead to
B\"acklund transformations with the desired additional properties, i.e.,
Weingarten transformations of pre-hyperbolic nets.
Weingarten transformations of hyperbolic nets are induced in the case that all
crosses of two pre-hyperbolic nets forming a Weingarten pair are internal,
i.e., describe hyperboloid patches.  These Weingarten pairs may be characterized both
geometrically and algebraically in terms of equi-twist properties of multidimensional 
A-nets and the positivity of the corresponding parallel invariants respectively.

\paragraph{Higher-dimensional crisscrossed lattices and Blaschke cubes.}
Before turning to the specific setting of A-nets, we will briefly discuss the
extension of arbitrary lattices $x: \Z^m \to \R^3$.  According to
Lemma~\ref{lem:cross_as_rhos}, any such lattice $x$ can be extended to a
crisscrossed lattice by means of a function $\rho \ne 0$ at vertices.  Two
crisscrossed lattices $(x,\rho)$ and $(x,\tilde\rho)$ coincide geometrically if
and only if $\tilde\rho = c\rho$.

The description of crosses in terms of $\rho$ immediately shows that for an
elementary hexahedron of $x$, compatible crosses for all but one quadrilateral
always determine a unique compatible cross for the last quadrilateral.
Combinatorial cubes with crisscrossed skew quadrilateral faces and the property
that any two adjacent crosses meet at a point on the common supporting edge
have been investigated in detail in \cite{BlaschkeBol:1938:Gewebe}.
Accordingly, we refer to such cubes as \emph{Blaschke cubes} (cf.
Fig.~\ref{fig:blaschke_cube}).

\begin{figure}[htb]
\begin{center}
\includegraphics[scale=.19]{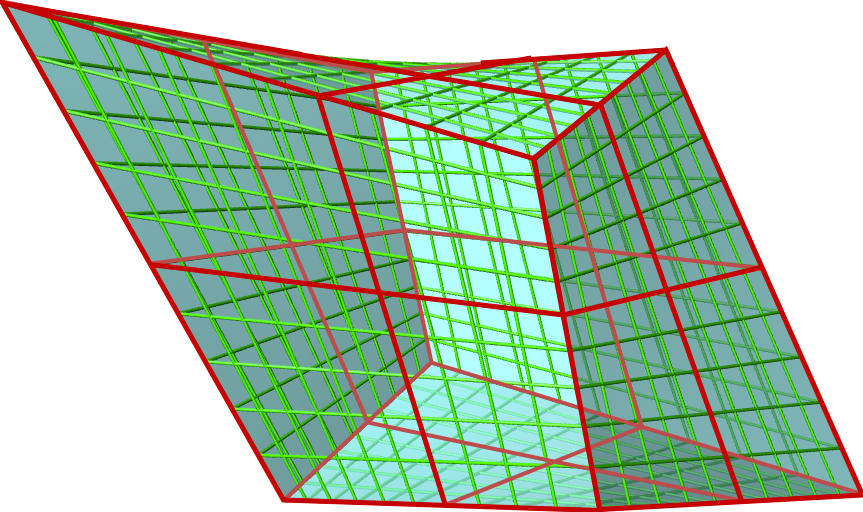}
\end{center}
\caption{A Blaschke cube is a skew cube whose faces are crisscrossed skew
quadrilaterals. If all crosses are internal, they determine an adapted
hyperboloid patch for each face, otherwise they may be understood as adapted
hyperboloids.}
\label{fig:blaschke_cube}
\end{figure}

\begin{lemma}[Blaschke cubes] \hfill
\begin{enumerate}[i)]
\item 
Let $C$ be a cube in $\RP^3$ with skew quadrilateral faces. Furthermore,
consider twelve additional points, one on each extended edge of $C$. Given five
faces of $C$, if, for each face, the four points on the edges are coplanar then
the four points associated with the remaining face are also coplanar. In other
words, the extension of five faces to crisscrossed quadrilaterals is consistent
and yields a unique cross for the sixth face.
\item 
The three lines connecting the centres of opposite crosses of a Blaschke
cube are concurrent.
\end{enumerate}
\label{lem:blaschke_cube}
\end{lemma}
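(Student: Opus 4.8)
The plan is to reduce both statements to the scalar description of crosses from Lemma~\ref{lem:cross_as_rhos}. Recall that a cross on a skew quadrilateral $(x_1,x_2,x_3,x_4)$ is encoded by scalars $\rho_1,\dots,\rho_4$ at its vertices (unique up to common scaling), that the cross vertex on the edge $[x_i,x_{i+1}]$ depends only on the ratio $\rho_{i+1}/\rho_i$, and that the cross centre is the weighted barycentre $\frac{\sum_i \rho_i x_i}{\sum_i \rho_i}$. Since in the Blaschke setting each edge of the cube carries a single cross vertex shared by its two adjacent faces, the central object is a putative \emph{global} function $\rho$ at the eight vertices of $C$ reproducing all twelve cross vertices at once.

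For part i), first I would observe that, for a single face, coplanarity of its four cross vertices is, by the generalized Menelaus Theorem (Theorem~\ref{thm:menelaus} with $n=4$), equivalent to the multiratio around that face being $1$, and that, on substituting the oriented-length ratios computed before Lemma~\ref{lem:cross_as_rhos}, this multiratio is exactly the telescoping product $\frac{\rho_2}{\rho_1}\frac{\rho_3}{\rho_2}\frac{\rho_4}{\rho_3}\frac{\rho_1}{\rho_4}$. I would then orient all six faces of $C$ coherently (working in any affine chart in which the twelve points are finite), so that each of the twelve edges is traversed in opposite directions by the two faces meeting along it. Consequently the product of the six face-multiratios telescopes edge by edge to $1$ identically, irrespective of the positions of the cross vertices. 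Since five of these multiratios equal $1$ by hypothesis, the sixth must equal $1$ as well, and the converse direction of Theorem~\ref{thm:menelaus} yields coplanarity of the four cross vertices on the remaining face. Equivalently, the five prescribed multiratio relations are precisely the compatibility conditions needed to propagate a consistent value of $\rho$ across shared edges to all eight vertices; they span the five-dimensional cycle space of the cube graph, so the sixth relation is automatic.

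For part ii), I would use that a Blaschke cube has all six faces crisscrossed compatibly, so the argument of part i) furnishes a global $\rho$ at the eight vertices with the cross centre of every face equal to the $\rho$-barycentre of its four vertices. Lifting the vertices to homogeneous representatives $X_v\in\R^4$, the centre of the cross on a face $F$ is represented by $\sum_{v\in F}\rho_v X_v$. For a pair of opposite faces $F,F'$, which together partition the eight vertices, one has $\sum_{v\in F}\rho_v X_v + \sum_{v\in F'}\rho_v X_v = \sum_{v}\rho_v X_v =: P$, so the point represented by $P$ lies on the line joining the two opposite centres. As the same $P$ arises for all three partitions into opposite face pairs, the three connecting lines all pass through the point represented by $P$, which proves concurrency.

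I expect the genuine obstacle to be the consistency bookkeeping in part i): verifying that the five prescribed faces determine $\rho$ at all eight vertices without over-determination, and that the sixth condition is then forced rather than independent. The telescoping-product formulation is designed to sidestep the explicit tracking of independent cycles, so I would lead with it. I would also need to accommodate the degenerate configurations (a vanishing $\rho_v$, or a vanishing face-sum $\sum_{v\in F}\rho_v$ sending a centre to infinity) projectively, which is exactly why part ii) is phrased through homogeneous representatives in $\R^4$ rather than affine barycentres.
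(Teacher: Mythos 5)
Your proof is correct and follows essentially the same route as the paper, which reduces both parts to the scalar description of crosses via $\rho$ from Lemma~\ref{lem:cross_as_rhos} and, for part ii), exhibits the common point as the $\rho$-weighted barycentre of all eight vertices. Your coherently oriented telescoping product of Menelaus multiratios (equivalently, the five face cycles spanning the cycle space of the cube graph) is simply an explicit working-out of the consistency claim that the paper dismisses with ``follows directly from the description of crosses in terms of $\rho$,'' and your homogeneous lift in part ii) is a harmless projective refinement of the paper's affine barycentre formula.
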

\begin{proof}
Both claims follow directly from the description of crosses in terms of $\rho$
(cf. Lemma~\ref{lem:cross_as_rhos}).  To verify part ii), we label the vertices of
the cube by $x_1,\ldots, x_8$ and note that the point 
\begin{equation*}
p = \frac{\rho_1 x_1 + \rho_2 x_2 + \rho_3 x_3
	+ \rho_4 x_4 + \rho_5 x_5 + \rho_6 x_6 + \rho_7 x_7 + \rho_8 x_8}
	{\rho_1 + \rho_2 + \rho_3 + \rho_4 + \rho_5 + \rho_6 + \rho_7 + \rho_8}
\end{equation*}
is contained in each of the lines connecting opposite centres.
\end{proof}

\subsection{Imposing the $C^1$-condition on multidimensional crisscrossed A-nets}
\label{subsec:imposing_local_c1_crisscrossed_anets}

While the previous considerations show that it is always possible to extend an
A-net $x : \Z^m \to \R^3$ to a crisscrossed A-net, it is not obvious that the
$C^1$-condition can be imposed consistently on each 2-dimensional layer of $x$.
We show that this is possible by first describing the corresponding Cauchy
problem algebraically in terms of $\rho$ and then giving a geometric proof for
the consistency of the evolution equations.

\paragraph{Cauchy problem for crisscrossed A-nets that satisfy the 
$C^1$-condition in 2D coordinate planes.}
If we assume that the $C^1$-condition holds for coordinate planes of higher-dimensional
A-nets then one obtains an extension of the system \eqref{eq:rho_evolution_2d} for
A-surfaces, namely
\begin{equation}
\rho_{iij} = \frac{\rho_{ii} \rho_j}{\rho a^{ij} a^{ij}_i},
\quad i,j \in \{1,\dots,m\},\ i \ne j.
\label{eq:rho_evolution_md}
\end{equation}
For a given supporting A-net, in order to be able to propagate initial values
$\rho$ to all vertices of $\Z^m$ according to \eqref{eq:rho_evolution_md}, one
has to prescribe Cauchy data along the coordinate axes and a suitable unit
hypercube, for instance
\begin{equation}
\rho(\mathcal{S}^i),\
i=1,\dots,m;
\quad
\rho(\left\{ 0,1 \right\}^m).
\label{eq:cauchy_data_rho_md}
\end{equation}
It remains to show that for data \eqref{eq:cauchy_data_rho_md} the propagation
according to \eqref{eq:rho_evolution_md} is consistent. The 2-dimensional
compatibility conditions $(\rho_{iij})_j = (\rho_{ijj})_i$ are satisfied by
virtue of Lemma~\ref{lem:4_cc_quads_consistency}.  The elementary 3-dimensional
compatibility condition is captured by
\begin{lemma}
Imposition of the $C^1$-condition \eqref{eq:rho_evolution_md} on the four pairs of
adjacent quadrilaterals of two neighbouring cubes of an A-net, as
depicted in Fig.~\ref{fig:stacked_cubes_combinatorics}, is consistent. This
means, if we start with the Cauchy data
$\rho,\rho_j,\rho_k,\rho_{jk},\rho_{ii}$ and successively apply
\eqref{eq:rho_evolution_md} then the compatibility conditions
\begin{equation}
(\rho_{iij})_k = (\rho_{iik})_j
\label{eq:rho_consistency_3d}
\end{equation}
are satisfied so that $\rho_{iijk}$ is well defined.
\label{lem:weak_c1_two_cubes_consistency}
\end{lemma}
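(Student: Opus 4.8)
The plan is to verify the compatibility condition \eqref{eq:rho_consistency_3d} by direct computation, reducing it to a purely algebraic identity among the Moutard coefficients of the supporting A-net. Writing out both routes to the corner value by means of the evolution rule \eqref{eq:rho_evolution_md} and its shifts, one obtains
\begin{equation*}
(\rho_{iij})_k = \frac{\rho_{iik}\,\rho_{jk}}{\rho_k\,a^{ij}_k\,a^{ij}_{ik}},
\qquad
(\rho_{iik})_j = \frac{\rho_{iij}\,\rho_{kj}}{\rho_j\,a^{ik}_j\,a^{ik}_{ij}},
\end{equation*}
where $\rho_{iij}$ and $\rho_{iik}$ are themselves produced by the evolution rule based at $z$,
\begin{equation*}
\rho_{iij} = \frac{\rho_{ii}\,\rho_j}{\rho\,a^{ij}\,a^{ij}_i},
\qquad
\rho_{iik} = \frac{\rho_{ii}\,\rho_k}{\rho\,a^{ik}\,a^{ik}_i}.
\end{equation*}
This confirms at the outset that both routes consume exactly the prescribed Cauchy data $\rho,\rho_j,\rho_k,\rho_{jk},\rho_{ii}$, so that the only question is whether the two expressions for $\rho_{iijk}$ coincide.

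First I would substitute the latter two relations into the former two and cancel the common factors. Since $\rho_{jk}=\rho_{kj}$ and the factor $\rho_{ii}/\rho$ is shared, the equality $(\rho_{iij})_k = (\rho_{iik})_j$ collapses to
\begin{equation*}
a^{ij}\,a^{ij}_i\,a^{ik}_j\,a^{ik}_{ij} = a^{ik}\,a^{ik}_i\,a^{ij}_k\,a^{ij}_{ik},
\end{equation*}
that is, to the identity
\begin{equation*}
\frac{a^{ij}}{a^{ij}_k}\cdot\frac{a^{ij}_i}{a^{ij}_{ik}}
= \frac{a^{ik}}{a^{ik}_j}\cdot\frac{a^{ik}_i}{a^{ik}_{ij}}.
\end{equation*}
The decisive observation is then that each of the two factors is an instance of the constant-ratio relation \eqref{eq:moutard_constant_ratio}. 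Applied on the elementary cube based at $z$, and using skew-symmetry $a^{ki}=-a^{ik}$ to rewrite $a^{ki}/a^{ki}_j = a^{ik}/a^{ik}_j$, equation \eqref{eq:moutard_constant_ratio} gives $a^{ij}/a^{ij}_k = a^{ik}/a^{ik}_j$; applied on the cube based at $z_i$ it gives $a^{ij}_i/a^{ij}_{ik} = a^{ik}_i/a^{ik}_{ij}$. Multiplying these two relations reproduces the required identity, whence $\rho_{iijk}$ is well defined.

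I do not expect a genuine obstacle here: the supporting A-net is a bona fide multidimensional A-net, so \eqref{eq:moutard_constant_ratio} holds on every elementary cube, and once the evolution rule is applied consistently the remainder is routine algebra. The one point that demands care is the index bookkeeping, namely recognizing that the two factors of the reduced identity live on two \emph{different} elementary cubes (based at $z$ and at $z_i$) and that the skew-symmetry of the Moutard coefficients is needed to match $a^{ki}$ with $a^{ik}$. As an alternative, one could argue geometrically via the consistency of Blaschke cubes (Lemma~\ref{lem:blaschke_cube}) together with the multidimensional consistency of the underlying A-net, but the algebraic reduction above is the most economical route.
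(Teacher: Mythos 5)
Your proof is correct, and the computation checks out: substituting the evolution rule \eqref{eq:rho_evolution_md} along both routes does reduce \eqref{eq:rho_consistency_3d} to the identity $a^{ij}a^{ij}_i a^{ik}_j a^{ik}_{ij} = a^{ik}a^{ik}_i a^{ij}_k a^{ij}_{ik}$, and this factors precisely into the two instances of \eqref{eq:moutard_constant_ratio} (on the hexahedra based at $z$ and at $z_i$, with the sign cancellation from $a^{ki}=-a^{ik}$ handled correctly). However, this is a genuinely different route from the paper's. The paper proves the lemma purely incidence-geometrically: it sets up multiratios $M$, $\tilde M$, $M_{ij}$ for the two stacked cubes, reduces the claim via the generalized Menelaus theorem to the cross-ratio identity $\prod_{i=1}^4 \crr(x_i,s_i,\tilde x_i,r_i)=1$, and verifies the latter by projecting the vertical lines through the vertices of the middle quadrilateral onto its diagonals and invoking the equality of cross-ratios of the two skew quadruples of lines on the hyperboloid associated with an A-cube (Fig.~\ref{fig:cross_ratios_a_cube}). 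Your argument instead outsources all the geometric content to the already-established relation \eqref{eq:moutard_constant_ratio}, which encodes the 3D consistency of the supporting A-net; what you gain is brevity and a direct parallel with the paper's own algebraic proof of the 2D analogue (Lemma~\ref{lem:4_cc_quads_consistency}), while the paper's Menelaus-type proof is self-contained at the level of incidence geometry and does not presuppose the Lelieuvre/Moutard machinery. (The paper also notes a third route in a later remark: the lemma follows from Theorem~\ref{thm:strong_c1_cube_consistency} via the transitivity of the $C^1$-condition, Lemma~\ref{lem:strong_c1_transitive}.)
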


\begin{figure}[htb]
\begin{center}
 \input{ 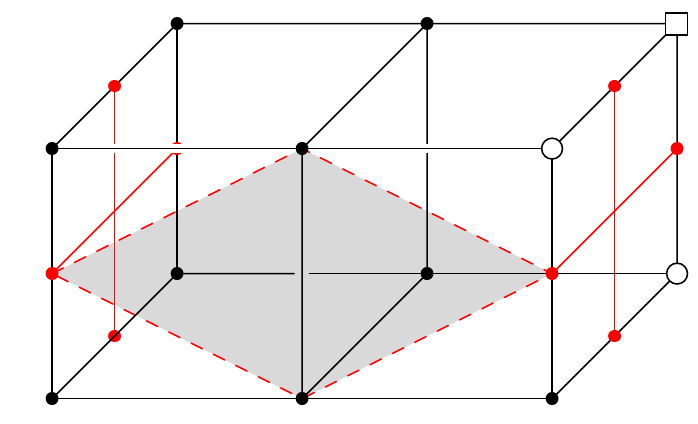_t } 
\end{center}
\caption{The $C^1$-condition can be consistently imposed on coordinate
planes of a crisscrossed A-net locally, i.e., for two face-adjacent cubes.}
\label{fig:stacked_cubes_combinatorics}
\end{figure}
\begin{remark}
Lemma~\ref{lem:weak_c1_two_cubes_consistency} guarantees that if the 
$C^1$-condition is satisfied on three of the ``compound long faces'' in
Fig.~\ref{fig:stacked_cubes_combinatorics}, then it is also satisfied on the
remaining compound face.
\label{rem:weak_c1_on_three_sides}
\end{remark}

\begin{titleproof}{Geometric proof of Lemma~\ref{lem:weak_c1_two_cubes_consistency}}
We give a Menelaus-type proof of \eqref{eq:rho_consistency_3d},
analogous to the geometric proof of Lemma~\ref{lem:4_cc_quads_consistency}.
In terms of the notation of Fig.~\ref{fig:stacked_cubes_geometry}, left,
we define the multi-ratios
\begin{equation*}
M =
	\frac{l(x_1,p_{41})}{l(p_{41},x_4)}
	\frac{l(x_4,p_{34})}{l(p_{34},x_3)}
	\frac{l(x_3,p_{23})}{l(p_{23},x_2)}
	\frac{l(x_2,p_{12})}{l(p_{12},x_1)},
\quad
\tilde M =
	\frac{l(\tilde x_1,\tilde p_{12})}{l(\tilde p_{12},\tilde x_2)}
	\frac{l(\tilde x_2,\tilde p_{23})}{l(\tilde p_{23},\tilde x_3)}
	\frac{l(\tilde x_3,\tilde p_{34})}{l(\tilde p_{34},\tilde x_4)}
	\frac{l(\tilde x_4,\tilde p_{41})}{l(\tilde p_{41},\tilde x_1)}
\end{equation*}
and
\begin{equation*}
M_{ij} =
	\frac{l(x_i,p_{ij})}{l(p_{ij},x_j)}
	\frac{l(x_j,s_j)}{l(s_j,\tilde x_j)}
	\frac{l(\tilde x_j,\tilde p_{ij})}{l(\tilde p_{ij},\tilde x_i)}
	\frac{l(\tilde x_i,r_i)}{l(r_i,x_i)},
\quad j = i+1 \mod 4.
\end{equation*}
\begin{figure}[htb]
\begin{center}
\parbox{.47\textwidth}{ \input{ 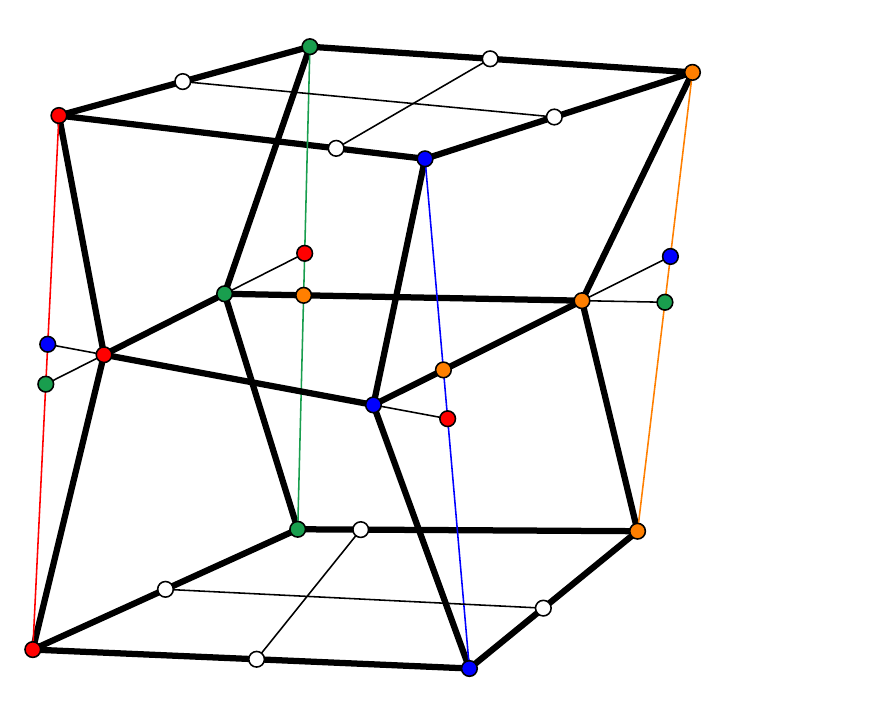_t } }
\quad
\parbox{.43\textwidth}{ \input{ 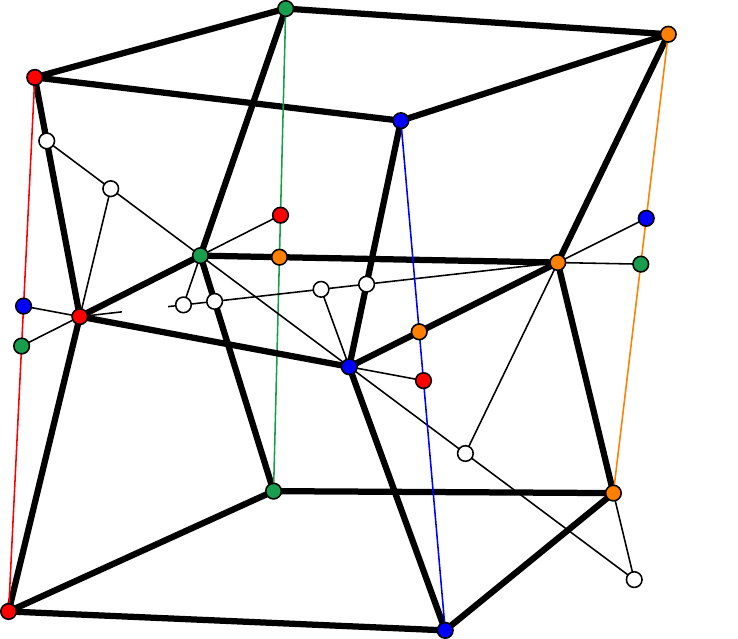_t } }
\end{center}
\caption{Combinatorial picture of two face-adjacent cubes of an A-net. Points
and lines of the same colour are coplanar and bold quadrilateral faces are skew.}
\label{fig:stacked_cubes_geometry}
\end{figure}
The generalized Menelaus Theorem (Theorem~\ref{thm:menelaus}) implies that
the condition $M = 1$ is equivalent to the coplanarity of $p_{12},p_{23},p_{34},p_{41}$,
and the analogous statement holds for $\tilde M = 1$.
Combining Menelaus' theorem with Lemma~\ref{lem:local_c1}, we see that
$M_{ij} = 1$ is equivalent to $p_{ij}$ and $\tilde p_{ij}$ being related
by the $C^1$-condition.
Assuming $M = M_{12} = M_{23} = M_{34} = M_{41} = 1$, we have to show
that $\tilde M = 1$, which is done by demonstrating that
\begin{equation}
M \tilde M M_{12} M_{23} M_{34} M_{41} = 1.
\label{eq:stacked_cubes_multiratios_identity}
\end{equation}
In terms of the cross-ratio of four collinear points, we see that
\begin{equation*}
M_{12} M_{23} M_{34} M_{41} =
M^{-1} \tilde M^{-1}
\prod_{i=1}^4
\crr(x_i,s_i,\tilde x_i, r_i)
\end{equation*}
and \eqref{eq:stacked_cubes_multiratios_identity} becomes
\begin{equation}
\prod_{i=1}^4 \crr(x_i,s_i,\tilde x_i, r_i) = 1.
\label{eq:stacked_cubes_multiratios_identity_crrs}
\end{equation}
To verify $\eqref{eq:stacked_cubes_multiratios_identity_crrs}$, we project the
lines $\inc{x_i,\tilde x_i}$ onto the diagonals of the middle quadrilateral
$(y_1,y_2,y_3,y_4)$ (see Fig.~\ref{fig:stacked_cubes_combinatorics}, right).
Each line $\inc{x_i,\tilde x_i}$ is projected through $y_i$ onto the diagonal
$\inc{y_j,y_k}$ that does not contain $y_i$. Note that $y_i$ and the lines $\inc{x_i,\tilde
x_i}$ and $\inc{y_j,y_k}$ are coplanar due to planarity
of the vertex star of $y_i$.  One obtains (indices taken
modulo 4)
\begin{equation}
\crr(x_i,s_i,\tilde x_i, r_i) = \crr(z_i,y_{i-1},\tilde z_i, y_{i+1}).
\label{eq:stacked_cubes_crr_projection}
\end{equation}
Using $\crr(c,b,a,d) = \crr(a,b,c,d)^{-1}$ and
\eqref{eq:stacked_cubes_crr_projection}, equation
\eqref{eq:stacked_cubes_multiratios_identity_crrs} becomes
\begin{eqnarray*}
&&\crr(z_1,y_4,\tilde z_1,y_2) \crr(z_3,y_2,\tilde z_3,y_4)
=
\crr(\tilde z_2,y_1,z_2,y_3) \crr(\tilde z_4,y_3,z_4,y_1) \\
&\iff& \crr(z_1,y_4,z_3,y_2) \crr(\tilde z_1,y_2,\tilde z_3,y_4)
=
\crr(y_1,z_4,y_3,z_2) \crr(y_1,\tilde z_2,y_3,\tilde z_4).
\end{eqnarray*}
Due to the geometry of an A-net (see Fig.~\ref{fig:cross_ratios_a_cube}) we have
\begin{equation*}
\crr(z_1,y_4,z_3,y_2) = \crr(y_1,z_4,y_3,z_2),
\quad
\crr(\tilde z_1,y_2,\tilde z_3,y_4) = \crr(y_1,\tilde z_2,y_3,\tilde z_4),
\end{equation*}
which finally proves the claim.
\end{titleproof}
\begin{figure}[htb]
\begin{center}
 \input{ 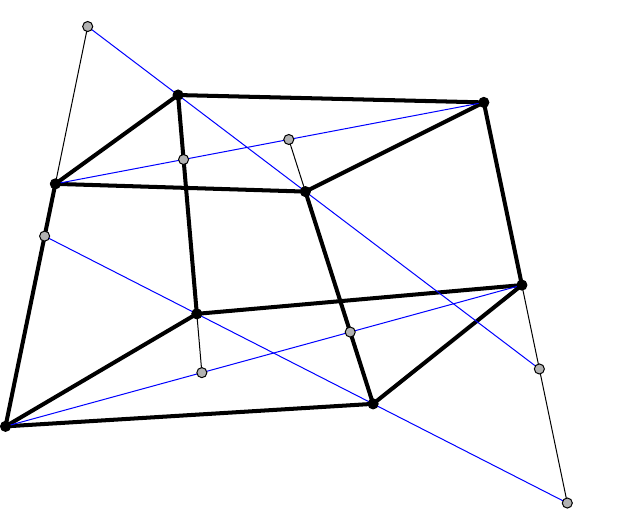_t } 
\end{center}
\caption{
For an elementary hexahedron of a generic A-net, both the four 
extended edges $l_1,\dots,l_4$ and the four blue diagonals are skew.  Since each of those
quadrupels of lines intersects the other quadrupel, they are contained in the
two complementary reguli of a hyperboloid. Therefore, each quadrupel has a well-defined
cross-ratio, e.g., $\crr(l_1,l_2,l_3,l_4) = \crr(p_1,p_2,p_3,p_4) =
\crr(q_1,q_2,q_3,q_4)$.
}
\label{fig:cross_ratios_a_cube}
\end{figure}

We obtain

\begin{proposition}
The $C^1$-condition can be imposed consistently on 2-dimensional
layers of a crisscrossed A-net $(x,\rho) : \Z^m \to \R^3 \times \R$.
\label{prop:weak_c1_consistency}
\end{proposition}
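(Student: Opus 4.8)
The plan is to reduce the global statement to the two elementary compatibility conditions already established and then to invoke the standard propagation argument of discrete differential geometry. Once the supporting A-net, and hence all Moutard coefficients $a^{ij}$, is fixed, the system \eqref{eq:rho_evolution_md} prescribes how $\rho$ is advanced from the Cauchy data \eqref{eq:cauchy_data_rho_md}; consistency means that the resulting value at every vertex of $\Z^m$ is independent of the order in which these evolution steps are applied, i.e.\ that the propagation is path-independent.

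First I would observe that a single application of \eqref{eq:rho_evolution_md} involves shifts in only two coordinate directions $i$ and $j$. Consequently, the only ways in which two different computation orders can first disagree are transpositions of elementary moves within a single $(i,j)$-plane or across a triple of directions $(i,j,k)$. This produces exactly two families of elementary compatibility conditions: the two-dimensional conditions $(\rho_{iij})_j=(\rho_{ijj})_i$ and the three-dimensional conditions $(\rho_{iij})_k=(\rho_{iik})_j$. The former are precisely the content of Lemma~\ref{lem:4_cc_quads_consistency} (equivalently \eqref{eq:rho_consistency_2d}), and the latter are exactly Lemma~\ref{lem:weak_c1_two_cubes_consistency}.

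The remaining step is to argue that the validity of these elementary conditions is sufficient, i.e.\ that no genuinely higher-dimensional obstruction can occur. Here I would appeal to the mechanism underlying the statement recalled in Section~\ref{subsec:anets_md} that $(m+1)$D consistency of an $m$D system implies consistency in all dimensions: any closed loop in the sequence of evolution steps can be contracted using only the elementary two- and three-dimensional moves, so closure on the elementary building blocks forces closure everywhere. Since the supporting A-net is itself governed by a multidimensionally consistent 3D system --- the star-triangle relation \eqref{eq:ste}, equivalently the BKP equation for $\tau$ --- the coefficients $a^{ij}$ entering \eqref{eq:rho_evolution_md} are globally well defined, and the propagation of $\rho$ inherits this consistency.

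I expect the main obstacle to be precisely this last point: confirming that the three-dimensional closure of Lemma~\ref{lem:weak_c1_two_cubes_consistency} already excludes any independent four-dimensional obstruction for $\rho$. The cleanest way to settle it would be to verify directly, using \eqref{eq:rho_evolution_md} together with \eqref{eq:ste}, that the a priori distinct evaluations of $\rho$ at the far vertex of a four-dimensional elementary cell coincide; alternatively, one could exhibit a potential for $\rho$ analogous to the $\tau$-function of \eqref{eq:moutard_param_by_tau}, which would render global consistency manifest. Either route reduces to a finite computation on a single elementary cell, the genuinely geometric content having already been absorbed into the two preceding lemmas.
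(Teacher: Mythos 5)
Your decomposition matches the paper's: consistency is reduced to the two-dimensional compatibility of Lemma~\ref{lem:4_cc_quads_consistency} and the three-dimensional compatibility of Lemma~\ref{lem:weak_c1_two_cubes_consistency}, after which one must lift to arbitrary $m$. The one place where your argument stops short is exactly the point you flag as the ``main obstacle'': you gesture at the general multidimensional-consistency mechanism and at a direct computation on a four-dimensional cell, but you do not carry either out. The paper closes this step without any new computation, by a short dimension-counting argument that you could have supplied: a point $z\in\Z^{m+1}$ ($m\ge 3$) lies in the $m+1$ sublattices $Z_1,\dots,Z_{m+1}\cong\Z^m$ through it; any two of these intersect in a sublattice of dimension at least $2$; and since each evolution step \eqref{eq:rho_evolution_md} involves shifts in only two coordinate directions, the value of $\rho(z)$ determined within $Z_i$ can always be computed by an evolution lying inside $Z_i\cap Z_j$, hence coincides with the value determined within $Z_j$. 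Induction on the dimension then finishes the proof, with no four-dimensional closure condition to check. Your alternative suggestion --- exhibiting a potential for $\rho$ analogous to \eqref{eq:moutard_param_by_tau} --- is also viable and is in effect what the paper does afterwards when it derives the general solution \eqref{eq:general_solution_rho_weak_c1_3d}, but as a route to the proposition it is heavier than necessary; the intersection argument shows that the ``finite computation on a single elementary cell'' you anticipate is in fact not needed at all. One further small caveat: your claim that two computation orders can first disagree only within a single plane or across a triple of directions implicitly dismisses the case of two moves involving four distinct directions; that case is harmless, but it is precisely the kind of bookkeeping the sublattice argument renders unnecessary.
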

\begin{proof}
Combining Lemma~\ref{lem:4_cc_quads_consistency} and
Remark~\ref{rem:weak_c1_on_three_sides}, it is evident that
\eqref{eq:rho_evolution_md} can be imposed consistently on $\Z^3$.  To verify
that this implies consistency in all higher dimensions, consider $z \in
\Z^{m+1}, m \ge 3$ and assume consistency on each $m$-dimensional sublattice.
The point $z$ is the intersection of $m$-dimensional sublattices
$Z_1,\dots,Z_{m+1}$, with $Z_i \cong \Z^m$.  The values of $\rho(z)$ induced by
different sublattices $Z_i$ and $Z_j$ with $\dim(Z_i \cap Z_j) \ge 2$ coincide,
because the evolution equations \eqref{eq:rho_evolution_md} for $\rho$ are
2-dimensional equations. This shows that all sublattices $Z_i$ induce the same
value for $\rho(z)$ by considering all possible pairs $Z_i, Z_j$.
\end{proof}

\paragraph{General solution $\rho$ for a crisscrossed A-net that satisfies the
$C^1$-condition in 2D coordinate planes.} We will now investigate in more
detail those functions $\rho:\Z^m \to \R$ which correspond to crosses that
satisfy the $C^1$-condition in each coordinate plane of a given A-net $x:\Z^m
\to \R^3, m \ge 3$.  In particular, in the 3-dimensional case, we derive
explicitly the general form of $\rho$ in terms of a potential $\tau$ for the
Moutard coefficients associated with $x$ and demonstrate that $\rho$ satisfies
a non-autonomous version of the discrete BKP equation
(\ref{eq:bkp_lexicographic}).  It is noted that the latter will subsequently be
shown to reduce to the standard discrete BKP equation in the context of
Weingarten pairs of (pre-)hyperbolic nets, leading to the remarkable relation
$\rho = \tau$ (see Remark~\ref{rem:bkp_geometric} and
Theorem~\ref{thm:bkp_geometric_positive}).

Consider the evolution equations \eqref{eq:rho_evolution_md} for $\rho$
satisfying the $C^1$-condition in coordinate planes
and firstly note that these equations are invariant with respect to the change
$a^{ij} \leftrightarrow a^{ji} = - a^{ij}$ of Moutard coefficients.  If we fix
one family of Moutard coefficients $a^{ij}$ for every $(i,j)$-coordinate plane
of $\Z^m$, e.g., by the condition $i < j$, then there exist potentials $\tau
: \Z^m \to \R$ that satisfy \eqref{eq:moutard_param_by_tau}.  With respect to
such a potential, the system \eqref{eq:rho_evolution_md} may be written as
\begin{equation}
\frac{\rho_j \rho_{ii}}{\rho \rho_{iij}} = 
a^{ij}a^{ij}_i =
\frac{\tau_j \tau_{ii}}{\tau \tau_{iij}},
\quad i,j \in \{1,\dots,m\},\ i \ne j.
\label{eq:weak_c1_in_tau}
\end{equation}
If we define
\begin{equation}
\xi = \frac{\rho}{\tau}
\label{eq:xi_rho_tau}
\end{equation}
then the system \eqref{eq:weak_c1_in_tau} may be stated as
\begin{equation}
\frac{\xi_j \xi_{ii}}{\xi \xi_{iij}} = 1,
\quad i,j \in \{1,\dots,m\},\ i \ne j
\label{eq:weak_c1_in_xi}
\end{equation}
and introduction of the quantities
\begin{equation}
q^{ij} = q^{ji} = \frac{\xi_i \xi_j}{\xi \xi_{ij}},
\quad
i,j \in \{1,\dots,m\},\ i \ne j
\label{eq:q_in_xi}
\end{equation}
transforms the system \eqref{eq:weak_c1_in_xi} into
\begin{equation}
q^{ij}q^{ij}_i = 1,
\quad
i,j \in \{1,\dots,m\},\ i \ne j.
\label{eq:weak_c1_in_q}
\end{equation}
The solutions $q^{ij}$ of \eqref{eq:weak_c1_in_q} may be expressed
in terms of $\xi$ according to \eqref{eq:q_in_xi} if the corresponding compatibility
conditions
\begin{equation}
\frac{q^{ij}}{q^{ij}_k}
= \frac{q^{jk}}{q^{jk}_i}
= \frac{q^{ki}}{q^{ki}_j},
\quad
i,j,k \in \{1,\dots,m\},\ i \ne j \ne k \ne i
\label{eq:q_equal_ratio_ijk}
\end{equation}
are satisfied. Thus, the general solution $\xi$ of \eqref{eq:weak_c1_in_xi}
is encoded in the compatible relations \eqref{eq:q_in_xi} with the $q^{ij}$
satisfying the coupled system \eqref{eq:weak_c1_in_q}, \eqref{eq:q_equal_ratio_ijk}.

Now, denote
\begin{equation*}
\inv{k} \alpha = \alpha^{(-1)^k}
\end{equation*}
and let $z=(z_1,\dots,z_m) \in \Z^m$ be coordinates of $\Z^m$.
The general solution of \eqref{eq:weak_c1_in_q} is given by
\begin{equation}
q^{ij} = \inv{z_i}\inv{z_j} \alpha^{ij}
	= \inv{z_i+z_j} \alpha^{ij},
\label{eq:q_ratio_inv_representation}
\end{equation}
where the functions $\alpha^{ij}$ are independent of $z_i$ and $z_j$ but
otherwise arbitrary.  In order to demonstrate how the remaining relations
\eqref{eq:q_equal_ratio_ijk} determine the functions $\alpha^{ij}$, we here
consider the case $m=3$ and merely state that an analogous procedure applies
for $m>3$.  In the 3-dimensional case, \eqref{eq:q_ratio_inv_representation}
becomes
\begin{equation}
q^{12} = \inv{z_1+z_2} \alpha^{12}(z_3), \quad
q^{23} = \inv{z_2+z_3} \alpha^{23}(z_1), \quad
q^{31} = \inv{z_3+z_1} \alpha^{31}(z_2)
\label{eq:q_in_alpha_3d}
\end{equation}
and the relations \eqref{eq:q_equal_ratio_ijk} read
\begin{equation*}
\frac{q^{12}}{q^{12}_3}
= \frac{q^{23}}{q^{23}_1}
= \frac{q^{31}}{q^{31}_2}.
\end{equation*}
Therefore,
\begin{equation}
\inv{z_1+z_2} \left( \frac{\alpha^{12}}{\alpha^{12}_3} \right)
= \inv{z_2+z_3} \left( \frac{\alpha^{23}}{\alpha^{23}_1} \right)
= \inv{z_3+z_1} \left( \frac{\alpha^{31}}{\alpha^{31}_2} \right)
\label{eq:constant_ratio_alphas}
\end{equation}
and, under the change of variables
\begin{equation}
\alpha^{12} = \inv{z_3} \tilde\alpha^{12}, \quad
\alpha^{23} = \inv{z_1} \tilde\alpha^{23}, \quad
\alpha^{31} = \inv{z_2} \tilde\alpha^{31},
\label{eq:alpha_tilde_alpha}
\end{equation}
we may write \eqref{eq:constant_ratio_alphas} as
\begin{equation*}
\inv{z_1+z_2+z_3} (\tilde\alpha^{12} \tilde\alpha^{12}_3)
= \inv{z_1+z_2+z_3} (\tilde\alpha^{23} \tilde\alpha^{23}_1)
= \inv{z_1+z_2+z_3} (\tilde\alpha^{31} \tilde\alpha^{31}_2).
\end{equation*}
Since $\tilde \alpha^{ij}$ is independent of $z_i$ and $z_j$,
we see that
\begin{equation*}
\tilde\alpha^{12} \tilde\alpha^{12}_3
= \tilde\alpha^{23} \tilde\alpha^{23}_1
= \tilde\alpha^{31} \tilde\alpha^{31}_2
= \text{const},
\end{equation*}
which allows us to introduce constants
$\beta^{12},\beta^{23},\beta^{31},\beta \in \C$, such that
the $\tilde a^{ij}$ are given by
\begin{equation}
\tilde \alpha^{12} = (\inv{z_3} (\beta^{12})^4) \beta^4, \quad
\tilde \alpha^{23} = (\inv{z_1} (\beta^{23})^4) \beta^4, \quad
\tilde \alpha^{31} = (\inv{z_2} (\beta^{31})^4) \beta^4.
\label{eq:general_solution_tilde_alpha_ij}
\end{equation}
Combining \eqref{eq:q_in_alpha_3d}, \eqref{eq:alpha_tilde_alpha}, and
\eqref{eq:general_solution_tilde_alpha_ij} yields
\begin{equation}
\begin{split}
q^{12} = \left( \inv{z_1+z_2} (\beta^{12})^4 \right) \left( \inv{z_1+z_2+z_3} \beta^4 \right),\\
q^{23} = \left( \inv{z_2+z_3} (\beta^{23})^4 \right) \left( \inv{z_1+z_2+z_3} \beta^4 \right),\\
q^{31} = \left( \inv{z_3+z_1} (\beta^{31})^4 \right) \left( \inv{z_1+z_2+z_3} \beta^4 \right).
\end{split}
\label{eq:q_in_alpha_tilde}
\end{equation}
Performing the variable substitution
\begin{equation}
\xi = \tilde \xi 
\left( \inv{z_1+z_2} (\beta^{12})^{-1} \right)
\left( \inv{z_2+z_3} (\beta^{23})^{-1} \right)
\left( \inv{z_3+z_1} (\beta^{31})^{-1} \right)
\left( \inv{z_1+z_2+z_3} \beta^{-1} \right),
\label{eq:xi_in_tilde_xi}
\end{equation}
relations \eqref{eq:q_in_xi} become
\begin{equation*}
q^{ij} = \frac{\xi_i \xi_j}{\xi \xi_{ij}}
	= \frac{\tilde \xi_i \tilde \xi_j}{\tilde \xi \tilde \xi_{ij}}
		\left( \inv{z_i+z_j} (\beta^{ij})^4 \right) \left( \inv{z_1+z_2+z_3} \beta^4 \right)
\end{equation*}
so that \eqref{eq:q_in_alpha_tilde} implies that
\begin{equation}
	\frac{\tilde \xi_i \tilde \xi_j}{\tilde \xi \tilde \xi_{ij}} = 1.
	\label{eq:xi_tilde_product_1}
\end{equation}
The general solution $\tilde \xi$ of \eqref{eq:xi_tilde_product_1} is given by
\begin{equation}
\tilde \xi(z) = f^{(1)}(z_1) f^{(2)}(z_2) f^{(3)}(z_3),
\quad f^{({i})}:\Z \to \C^*.
\label{eq:general_solution_tilde_xi}
\end{equation}
Combination of equations \eqref{eq:xi_rho_tau}, \eqref{eq:xi_in_tilde_xi},
\eqref{eq:general_solution_tilde_xi} yields the general solution $\rho$ of
\eqref{eq:weak_c1_in_tau} in terms of $\tau$, namely
\begin{equation}
\rho = (\inv{z_1+z_2} \gamma^{12}) (\inv{z_2+z_3} \gamma^{23}) (\inv{z_3+z_1} \gamma^{31})
	(\inv{z_1+z_2+z_3} \gamma) f^{(1)}(z_1)f^{(2)}(z_2)f^{(3)}(z_3) \ \tau,
\label{eq:general_solution_rho_weak_c1_3d}
\end{equation}
where the constants $\gamma,\gamma^{ij} \in \C$ and functions $f^{({i})}:\Z \to \C$
have to be chosen in such a manner that $\rho \in \R^*$ but may otherwise be arbitrary.

\begin{remark}
Equation~\eqref{eq:ste} for Moutard coefficients expressed in terms of a
potential $\tau$ that satisfies \eqref{eq:moutard_param_by_tau} becomes a
discrete BKP (dBKP) equation
\begin{equation}
\tau \tau_{123} + \varepsilon^1 \tau_1 \tau_{23} + \varepsilon^2 \tau_2 \tau_{13} + \varepsilon^3 \tau_3 \tau_{12} = 0
\label{eq:bkp_123_general}
\end{equation}
with $\varepsilon^i = \pm 1$ depending on which Moutard coefficients are chosen
to be parametrized by $\tau$.  Accordingly, $\rho$ defined by
\eqref{eq:general_solution_rho_weak_c1_3d} satisfies the corresponding 
(integrable) non-autonomous BKP-type equation
\begin{equation}
\rho \rho_{123}
+ \kappa^1 \rho_{1}\rho_{23}
+ \kappa^2 \rho_{2}\rho_{13}
+ \kappa^3 \rho_{3}\rho_{12} = 0,
\label{eq:modified_bkp_for_rho}
\end{equation}
where
\begin{gather*}
\kappa^1 = \varepsilon^1 (\inv{z_1+z_2} \gamma^{12})^4 (\inv{z_3+z_1} \gamma^{31})^4,\\
\kappa^2 = \varepsilon^2 (\inv{z_1+z_2} \gamma^{12})^4 (\inv{z_2+z_3} \gamma^{23})^4,\\
\kappa^3 = \varepsilon^3 (\inv{z_2+z_3} \gamma^{23})^4 (\inv{z_3+z_1} \gamma^{31})^4.
\end{gather*}
\end{remark}

\begin{remark}
Given a potential $\tau$ that parametrizes a certain choice of Moutard
coefficients, a function $\tilde \tau = \tilde \xi \tau$ is another potential
if and only if
\begin{equation*}
\frac{\tilde \tau_i \tilde \tau_j}{\tilde \tau \tilde \tau_{ij}}
=
\frac{\tilde \xi_i \tilde \xi_j}{\tilde \xi \tilde \xi_{ij}}
\frac{\tau_i \tau_j}{\tau \tau_{ij}}
=
\frac{\tau_i \tau_j}{\tau \tau_{ij}},
\end{equation*}
i.e., if and only if $\tilde \xi$ satisfies \eqref{eq:xi_tilde_product_1}.
Therefore, in the 3-dimensional case, the general potential $\tilde \tau$
which parametrizes the same Moutard coefficients as $\tau$ is given by
\begin{equation*}
\tilde \tau = f^{(1)}(z_1)f^{(2)}(z_2)f^{(3)}(z_3) \ \tau,
\end{equation*}
with $f^{(i)}:\Z \to \C^*$.  Taking into account black-white rescaling of the
Lelieuvre normals and the according rescaling of Moutard coefficients, one
obtains an equivalence class of Moutard coefficients that depends only on the
geometry of the underlying A-net. In terms of the potentials, any two
representatives $\tau$ and $\tilde \tau$ of this equivalence class are related
by
\begin{equation} \tilde \tau = (\inv{z_1+z_2+z_3} \gamma)
f^{(1)}(z_1)f^{(2)}(z_2)f^{(3)}(z_3) \ \tau \label{eq:general_solution_tau_3d}
\end{equation}
with $\gamma \in \C$ such that $\gamma^4 \in \R_+$.  Comparing
\eqref{eq:general_solution_tau_3d} with
\eqref{eq:general_solution_rho_weak_c1_3d} shows that, roughly speaking, in the
3-dimensional case and for a fixed potential $\tau$ the general solution $\rho$
of \eqref{eq:weak_c1_in_tau} may be decomposed into the general potential
$\tilde \tau$ that parametrizes Moutard coefficients in the equivalence class
of the given supporting A-net and a factor containing three additional
parameters $\gamma^{12},\gamma^{23},\gamma^{31}$.
\label{rem:relation_rho_tau}
\end{remark}

\paragraph{A class of canonical B\"acklund transformations for pre-hyperbolic nets.}
Proposition~\ref{prop:weak_c1_consistency} allows us to construct
\emph{B\"acklund transforms} of a pre-hyperbolic net $f=(x,\rho) : \Z^2 \to
\R^3 \times \R$ in the following sense.  We start with a Weingarten transform
$\tilde x$ of the supporting A-surface $x$. This gives a 2-layer 3D A-net $X :
\Z^2 \times \{0,1\} \to \R^3$, which is composed of the layers $X(\cdot,0) = x$
and $X(\cdot,1) = \tilde x$.  The additional data needed to specify a
B\"acklund transform $\tilde f = (\tilde x, \tilde \rho)$ of $f$ are the values
of $\tilde \rho$ at the vertices of one elementary quadrilateral $\tilde Q$ of
$\tilde x$ (cf.  Fig.~\ref{fig:cauchy_data_weak_baecklund}, left).  The
remaining values of $\tilde \rho$ are then determined by the $C^1$-condition
imposed on vertical layers, which implies that the $C^1$-condition is satisfied
for the resulting crisscrossed A-net $\tilde f$ (cf.
Remark~\ref{rem:weak_c1_on_three_sides}).  Equivalently, in geometric terms,
the Cauchy data needed to specify a B\"acklund transformation are cross
vertices on the ``vertical edges'' incident to the four vertices of one
elementary square $Q$ of $x$, which determines a unique Blaschke cube by virtue
of Lemma~\ref{lem:blaschke_cube} (cf.
Fig.~\ref{fig:cauchy_data_weak_baecklund}, right). Thus, we may say that a
B\"acklund transform of $f$ is determined by a Weingarten transform $\tilde x$
of $x$ and the extension of one cube of $X$ to a Blaschke cube.

\begin{figure}[htb]
\begin{center}
 \input{ 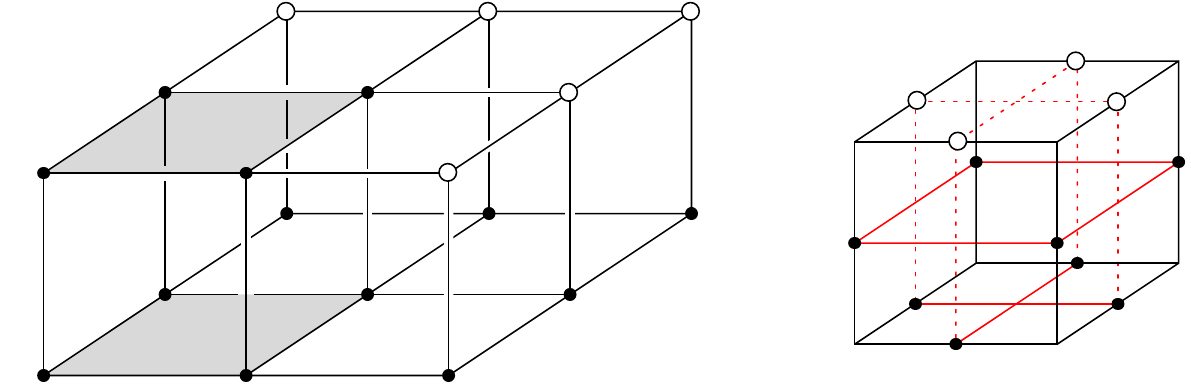_t } 
\end{center}
\caption{ Left: The values of $\rho$ and $\tilde \rho$ at the black vertices
determine the values of $\tilde \rho$ at the white vertices by imposing the
$C^1$-condition on every 2-dimensional layer.  Right: The black cross
vertices of a Blaschke cube determine the remaining white vertices (cf.
Lemma~\ref{lem:blaschke_cube}).  }
\label{fig:cauchy_data_weak_baecklund}
\end{figure}

Since Cauchy data defining a B\"acklund transformation consist of the values
of $\tilde \rho$ at the four vertices of the quadrilateral $\tilde Q$, the
corresponding cross adapted to $\tilde Q$ can be chosen independently from the
cross adapted to $Q$. Thus, in general, the induced hyperboloids $h$ and
$\tilde h$ adapted to $Q$ and $\tilde Q$ do not form a classical Weingarten pair.

\begin{remark}
Proposition~\ref{prop:weak_c1_consistency} may be exploited to impose the
$C^1$-condition on all coordinate surfaces of a 3-layer crisscrossed A-net
$(x,\rho) : \Z^2 \times \left\{ 0,1,2 \right\} \to \R^3 \times \R$. In this
case, given initial data $\rho(\mathcal{S}^{12})$ that satisfy the
$C^1$-condition, $\rho_{33}$ already determines $\rho$ in the shifted
coordinate plane $\mathcal{S}^{12}_{33}$ by means of application of the $C^1$-condition
in all $(1,3)$- and $(2,3)$-coordinate planes.  However, 
$\rho_3,\rho_{13},\rho_{123},\rho_{23}$ can still be chosen independently as
described above. In this way, we obtain three pre-hyperbolic nets
$f^{(i)}=(x,\rho)(\cdot,i), i = 0,1,2$ such that both $(f^{(0)},f^{(1)})$ and
$(f^{(1)},f^{(2)})$ form a B\"acklund pair but also the nets
$f^{(0)},f^{(2)}$ are related in a particular manner. Furthermore, we may apply this
construction to an A-net $x$ consisting
of arbitrarily many layers and construct a special sequence
$f^{(0)},f^{(1)},f^{(2)},\cdots$ of B\"acklund transforms of pre-hyperbolic
nets that are adapted to $x$. To this end, we choose the first B\"acklund
transform $f^{(1)}$ of a given pre-hyperbolic net $f^{(0)}$ generically, while
all further B\"acklund transforms are determined uniquely (up to homogeneous
rescaling of $\rho$ in each ``horizontal'' layer) by the two initial nets.
By construction, the corresponding function $\rho$ obeys the BKP-type
equation \eqref{eq:modified_bkp_for_rho}. In this connection, it is observed
that if the layers $f^{(i)}$ are merely related by B\"acklund transformations
so that the $C^1$-condition is not necessarily satisfied on all ``vertical'' coordinate
surfaces then the expression \eqref{eq:general_solution_rho_weak_c1_3d} for
$\rho$ is still valid but $\gamma^{23},\gamma^{31}$ and $\gamma$ are now
arbitrary functions of $z_3$ and $\rho$ is governed by a slight generalization
of \eqref{eq:modified_bkp_for_rho}, which, generically, also depends on the
coefficient $\gamma$.
\label{rem:twofold_baecklund_trafo}
\end{remark}

\subsection{The notion of Weingarten transformations}
\label{subsec:weingarten_trafos}

We begin with a characterization of crosses adapted to opposite
faces of an elementary hexahedron of an A-net such that the corresponding
hyperboloids form a classical Weingarten pair.

\paragraph{Weingarten cubes.}
In the following, we use the term \emph{A-cube}
for an elementary hexahedron of an A-net, i.e., a cube with skew quadrilateral
faces and planar vertex stars.  We will show that for an A-cube with a
hyperboloid $h$ adapted to one face $Q$, there exists a unique hyperboloid
$\tilde h$ adapted to the opposite face $\tilde Q$, such that $h,\tilde h$ constitutes a
Weingarten pair.  In other words, there exists a unique Weingarten transformation
$T$ such that $\tilde h = T(h)$ is a hyperboloid adapted to $\tilde Q$.  The
geometric characterization of corresponding points $y \in h$ and $\tilde y =
T(h) \in \tilde h$ is that the line connecting $y$ and $\tilde y$ is the
intersection of the tangent planes to $h$ and $\tilde h$ in $y$ and $\tilde y$
respectively. We refer to this by saying that the points $y$ and $\tilde y$
satisfy the \emph{Weingarten property} (cf. Definition~\ref{def:wtrafo_anets}).

Consider an A-cube with crosses attached to opposite faces, labelled as in
Fig.~\ref{fig:weingarten_cube}, which determine hyperboloids $h$ and $\tilde
h$ adapted to the bottom and top faces respectively.

\begin{figure}[htb]
\begin{center}
 \input{ 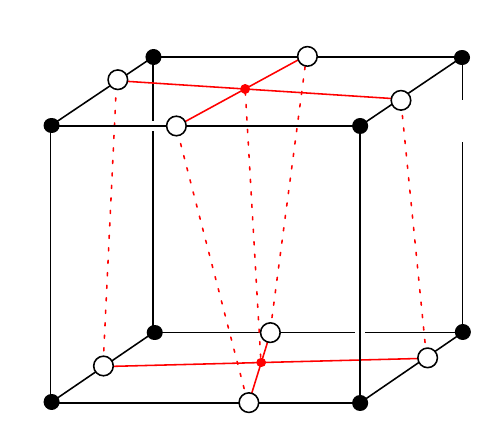_t } 
\end{center}
\caption{Combinatorics of an A-cube with crosses adapted to two opposite faces.}
\label{fig:weingarten_cube}
\end{figure}

It is noted that, for any pair of adapted hyperboloids, the Weingarten property is
automatically satisfied at corresponding vertices $x_i,\tilde x_i$ due to the
geometry of A-cubes, i.e., since vertex stars are planar.  Now, we assume that
the crosses in Fig.~\ref{fig:weingarten_cube} define hyperboloids $h, \tilde
h$, which are related by a classical Weingarten transformation $T$.  Since
vertices $x_i,\tilde x_i$ are corresponding points and Weingarten
transformations preserve asymptotic lines, $T$ maps the asymptotic line
$\inc{x_i,x_{i+1}}$ (indices taken modulo 4) of $h$ to the asymptotic line
$\inc{\tilde x_i,\tilde x_{i+1}}$ of $\tilde h$, that is,
\begin{equation*}
T(\inc{x_i,x_{i+1}}) = \inc{\tilde x_i,\tilde x_{i+1}}.
\end{equation*}
In particular, each point $y \in \inc{x_i,x_{i+1}}$ has a corresponding point
$T(y) = \tilde y \in \inc{\tilde x_i,\tilde x_{i+1}}$.
By definition, two corresponding cross vertices $p_{ij}, \tilde p_{ij}$
satisfy the Weingarten property if and only if
\begin{equation}
\inc{p_{ij},\tilde p_{ij}} 
	= \inc{x_{i},x_{j},p_{kl}} 
			\cap \inc{\tilde x_{i},\tilde x_{j},\tilde p_{kl}},
\quad (j,k,l) = (i+1,i+2,i+3).
\label{eq:weingarten_property_cross_vertices}
\end{equation}
We make the crucial observation that
\eqref{eq:weingarten_property_cross_vertices} is equivalent to each of the
quadruples of points $(\tilde p_{ij},x_{i},x_{j},p_{kl})$ and $(p_{ij},\tilde
x_{i},\tilde x_{j},\tilde p_{kl})$ being coplanar, i.e., pairs $(p_{ij},\tilde
p_{kl})$ and $(\tilde p_{ij},p_{kl})$ of diagonally opposite cross vertices
being related by the $C^1$-condition with respect to both of the edges
$\inc{x_i,x_j}$ and $\inc{\tilde x_i,\tilde x_j}$ (see
Lemma~\ref{lem:local_c1}). 

Without loss of generality, we may assume that $T(p_{ij}) = \tilde p_{ij}$ for
one pair of corresponding cross vertices which, in turn, implies that the
opposite cross vertices $p_{kl}, \tilde p_{kl}$ also have to satisfy $T(p_{kl})
= \tilde p_{kl}$.  This follows from the fact that Weingarten transformations
preserve asymptotic lines, that is, $T(p_{ij}) = \tilde p_{ij}$ implies
$T(\inc{p_{ij},p_{kl}}) = \inc{\tilde p_{ij}, \tilde p_{kl}}$, and we may
conclude that
\begin{equation*}
\begin{split}
T(p_{kl})
	&= T(\inc{p_{ij},p_{kl}} \cap \inc{x_k,x_l})\\
  &= T(\inc{p_{ij},p_{kl}}) \cap T(\inc{x_k,x_l})
  = \inc{\tilde p_{ij},\tilde p_{kl}} \cap \inc{\tilde x_k,\tilde x_l}
	= \tilde p_{kl}.
\end{split}
\end{equation*}
The fact that it is actually possible to have simultaneously $T(p_{ij}) = \tilde
p_{ij}$ and $T(p_{kl}) = \tilde p_{kl}$ is due to the following important
\begin{lemma}[$C^1$-identity]
Consider diagonally opposite cross vertices $p_{ij}$ and $\tilde p_{kl}$ of an
A-cube equipped with crosses as in Fig.~\ref{fig:weingarten_cube}.  The points
$p_{ij}, \tilde x_i, \tilde x_j, \tilde p_{kl}$ are coplanar if and only if the
points $p_{ij}, x_k, x_l, \tilde p_{kl}$ are coplanar.
\label{lem:c1_identity}
\end{lemma}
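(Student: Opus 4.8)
The plan is to translate each of the two coplanarity conditions into a statement about a single line meeting an edge of the cube, and then to exploit the fact that the four edges of an A-cube running in a fixed coordinate direction constitute a regulus. Throughout I would invoke the genericity assumption on the underlying A-net, so that no four relevant points are in special position and no two relevant lines are parallel. First I would record the elementary reformulation: four points lie in a common plane if and only if the two lines joining them in pairs are coplanar, and, under genericity, two coplanar lines meet. Writing $\ell = \inc{p_{ij},\tilde p_{kl}}$ for the line through the two diagonally opposite cross vertices, the condition that $p_{ij},\tilde x_i,\tilde x_j,\tilde p_{kl}$ be coplanar is then equivalent to $\ell$ meeting the edge $\inc{\tilde x_i,\tilde x_j}$, while the condition that $p_{ij},x_k,x_l,\tilde p_{kl}$ be coplanar is equivalent to $\ell$ meeting the edge $\inc{x_k,x_l}$. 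Thus it suffices to prove that $\ell$ meets $\inc{\tilde x_i,\tilde x_j}$ if and only if it meets $\inc{x_k,x_l}$.

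Next I would observe that, in the combinatorics of Fig.~\ref{fig:weingarten_cube} with $(j,k,l)=(i+1,i+2,i+3)$, the four edges $\inc{x_i,x_j}$, $\inc{x_k,x_l}$, $\inc{\tilde x_i,\tilde x_j}$, $\inc{\tilde x_k,\tilde x_l}$ are precisely the four edges of the A-cube running in the single coordinate direction common to the opposite face-edges $\{i,j\}$ and $\{k,l\}$. By the geometry of a generic A-cube (Fig.~\ref{fig:cross_ratios_a_cube}), these four mutually skew edges form one regulus $R$ of a hyperboloid, the complementary regulus $R'$ consisting of exactly those lines transversal to all four. The two edges carrying the prescribed cross vertices, namely $\inc{x_i,x_j}\ni p_{ij}$ and $\inc{\tilde x_k,\tilde x_l}\ni\tilde p_{kl}$, are two of these rulings, and $\inc{\tilde x_i,\tilde x_j}$ and $\inc{x_k,x_l}$ are the remaining two.

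The transversal argument then closes the proof. By construction $\ell$ already meets two rulings of $R$, at $p_{ij}$ and $\tilde p_{kl}$, and, being generic, is itself not a ruling of $R$. If $\ell$ in addition meets $\inc{\tilde x_i,\tilde x_j}$, then $\ell$ is a common transversal of three pairwise skew rulings of $R$; since three pairwise skew lines determine a unique hyperboloid whose complementary regulus is exactly the family of their common transversals, it follows that $\ell\in R'$. But every line of $R'$ meets every line of $R$, so in particular $\ell$ meets the fourth ruling $\inc{x_k,x_l}$. The converse is obtained by interchanging the roles of $\inc{\tilde x_i,\tilde x_j}$ and $\inc{x_k,x_l}$. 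Hence the two incidence conditions, and therefore the two coplanarity statements, are equivalent.

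The step I expect to carry the real content is the identification of the four coordinate-direction edges as a single regulus, i.e.\ the reduction to Fig.~\ref{fig:cross_ratios_a_cube}; everything else is incidence bookkeeping. An alternative, more computational route would bypass the regulus picture by fixing a Lelieuvre representation of the cube and expressing each coplanarity as the vanishing of a $2\times2$ determinant in the coefficients $(\mu,\nu)$ of a normal $m=\mu\,n_i+\nu\,n_j$ to a plane through the relevant edge, exactly as in the proof of Lemma~\ref{lem:local_c1_rho}; one would then have to show that the two determinants are proportional modulo the Moutard relations of the A-cube. I would expect this to reproduce the same identity at the cost of a sizable triple-product computation, and I would therefore favour the regulus argument.
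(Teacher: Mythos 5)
Your proof is correct, but it takes a genuinely different route from the paper's. The paper proves the $C^1$-identity by a \emph{Cox deformation}: it replaces the points $\tilde x_1,x_4$ and the vertex planes $P_1,\tilde P_4$ of the Cox configuration attached to the A-cube by $\tilde p_{12},p_{34}$ and $E_1=\inc{p_{34},x_1,x_2}$, $\tilde E_4=\inc{p_{34},\tilde x_3,\tilde x_4}$, and then reads off the equivalence from Cox' theorem (Theorem~\ref{thm:cox}). You instead reduce both coplanarity statements to incidences of the single line $\ell=\inc{p_{ij},\tilde p_{kl}}$ with edges of the cube and then use the fact that the four coordinate-direction edges of a generic A-cube form one regulus of a hyperboloid; since $\ell$ already meets two of these rulings, meeting a third forces $\ell$ into the complementary regulus and hence onto the fourth. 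Your key geometric input is sound and is in fact already present in the paper, namely in Fig.~\ref{fig:cross_ratios_a_cube} (stated there for the vertical edges and the face diagonals, but the same planarity-of-vertex-stars argument applies verbatim to the horizontal direction after relabelling -- it would be worth one sentence making that reduction explicit rather than just citing the figure). The trade-off: the paper's Cox-deformation mechanism is not a one-off, it is iterated in the proof of Theorem~\ref{thm:strong_c1_cube_consistency} to split the A-cube into two smaller A-cubes, so establishing it here amortizes over the subsequent development; your argument is more elementary and self-contained, resting only on the classical fact that three mutually skew lines determine a unique doubly ruled quadric (already invoked in Section~\ref{subsec:hyperboloids}), and it yields a sharper picture than the lemma itself, since it shows that either coplanarity condition is equivalent to $\inc{p_{ij},\tilde p_{kl}}$ being a common transversal of \emph{all four} direction-edges, which also makes Remark~\ref{rem:c1_identity_projection} immediate.
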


\begin{remark}
One may interpret Lemma~\ref{lem:c1_identity} as follows: The point $\tilde
p_{kl}$ is the projection of $p_{ij}$ through both the line $\inc{\tilde
x_i,\tilde x_j}$ and the line $\inc{x_k,x_l}$
onto the line $\inc{\tilde x_k,\tilde x_l}$.
\label{rem:c1_identity_projection}
\end{remark}

\begin{titleproof}{Proof of Lemma~\ref{lem:c1_identity}}
The most elegant geometric proof of this lemma is via Cox' theorem
(Theorem~\ref{thm:cox}), applied in a suitable way
\cite{King:PrivateCommunication}.  Fig.~\ref{fig:cox_deformation}, left, shows
the initial A-cube, where a point and a plane are incident if they are
associated with adjacent vertices.  Given the crosses that are depicted in
Fig.~\ref{fig:weingarten_cube}, the initial A-cube may be modified by replacing
points $\tilde x_1$ and $x_4$ by $\tilde p_{12} \in \inc{\tilde x_1,\tilde
x_2}$ and $p_{34} \in \inc{x_3,x_4}$, and replacing planes $P_1$ and $\tilde
P_4$ by $E_1 = \inc{p_{34},x_1,x_2}$ and $\tilde E_4 = \inc{p_{34},\tilde
x_3,\tilde x_4}$. According to Cox' theorem, $\tilde p_{12} \in E_1$ if and
only if $\tilde p_{12} \in \tilde E_4$. 
\end{titleproof}

\begin{figure}[htb]
\begin{center}
 \input{ 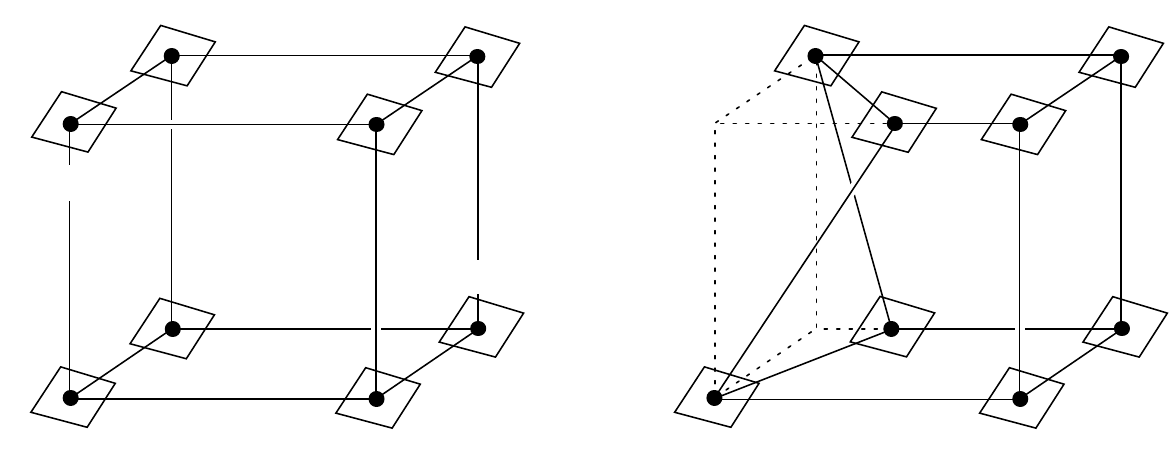_t } 
\end{center}
\caption{Cox deformation of an A-cube.}
\label{fig:cox_deformation}
\end{figure}

If we start with the A-cube in Fig~\ref{fig:cox_deformation}, left and wish
to make sure that the configuration in Fig.~\ref{fig:cox_deformation}, right 
likewise constitutes an A-cube then we may choose either
$\tilde p_{12} \in \inc{\tilde x_1,\tilde x_2}$ or $p_{34} \in \inc{x_3,x_4}$.
Selecting, for example, $p_{34} \in \inc{x_3,x_4}$ yields the planes $E_1 =
\inc{p_{34},x_1,x_2}$ and $\tilde E_4 = \inc{p_{34},\tilde x_3,\tilde x_4}$.
By virtue of Cox' theorem, the points $\inc{\tilde x_1,\tilde x_2} \cap E_1$
and $\inc{\tilde x_1,\tilde x_2} \cap \tilde E_4$ coincide and define $\tilde
p_{12}$. We refer to this construction as a \emph{Cox deformation} of the original A-cube.

Remark~\ref{rem:c1_identity_projection} elucidates the propagation of cross
vertices induced by imposition of
\eqref{eq:weingarten_property_cross_vertices}.  The following
theorem shows that, for any given initial
cross $(p_{12},p_{23},p_{34},p_{41})$, this propagation indeed yields a cross
$(\tilde p_{12},\tilde p_{23},\tilde p_{34},\tilde p_{41})$ and, moreover, that
the Weingarten property is satisfied for the cross centres $q$ and $\tilde q$.
Having established this, it will be easy to demonstrate that
hyperboloids given by crosses that satisfy
$\eqref{eq:weingarten_property_cross_vertices}$ are indeed related by a classical
Weingarten transformation (Corollary~\ref{cor:w_cubes_existence}).

\begin{theorem}
Consider an A-cube $(x_1,\dots,x_4,\tilde x_1,\dots,\tilde x_4)$ as in
Fig.~\ref{fig:weingarten_cube}. Propagation of a cross
$(p_{12},p_{23},p_{34},p_{41})$ attached to the quadrilateral
$(x_1,x_2,x_3,x_4)$ according to the $C^1$-condition around horizontal edges (cf.
Remark~\ref{rem:c1_identity_projection}) yields a unique cross $(\tilde
p_{12},\tilde p_{23},\tilde p_{34},\tilde p_{41})$ attached to the
quadrilateral $(\tilde x_1,\tilde x_2,\tilde x_3,\tilde x_4)$.
The centres $q$ and $\tilde q$ of the crosses satisfy the Weingarten property, i.e.,
\begin{equation}
\inc{q,\tilde q} 
	= \inc{p_{12},p_{23},p_{34},p_{41}} 
		\cap \inc{\tilde p_{12},\tilde p_{23},\tilde p_{34},\tilde p_{41}}.
\label{eq:weingarten_property_centres}
\end{equation}
\label{thm:strong_c1_cube_consistency}
\end{theorem}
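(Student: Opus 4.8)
My plan is to separate the statement into two assertions and handle them in turn: first, that the prescribed propagation really produces a \emph{closed} cross on the top face $(\tilde x_1,\dots,\tilde x_4)$, and second, that the two centres $q,\tilde q$ satisfy the Weingarten property \eqref{eq:weingarten_property_centres}. By Lemma~\ref{lem:c1_identity} the propagation around a horizontal edge is unambiguous: each top cross vertex is the projection of a single bottom cross vertex onto the opposite top edge (Remark~\ref{rem:c1_identity_projection}), so $\tilde p_{34}$ is determined by $p_{12}$, $\tilde p_{41}$ by $p_{23}$, $\tilde p_{12}$ by $p_{34}$, and $\tilde p_{23}$ by $p_{41}$, each lying on the correct edge $\inc{\tilde x_k,\tilde x_l}$. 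Thus four candidate points are well defined; what must be shown is that they are \emph{coplanar}, for only then (Lemma~\ref{lem:cross_as_rhos}) do they constitute a cross with well-defined parameters $\tilde\rho_i$ and centre $\tilde q$.

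The coplanarity is the combinatorial heart of the theorem. By the generalized Menelaus Theorem (Theorem~\ref{thm:menelaus}, $n=3$) the four top vertices are coplanar precisely when the product of their four edge-division ratios equals $1$; equivalently, since each propagation fixes exactly one such ratio (one adjacent pair $\tilde\rho_k:\tilde\rho_l$), I must verify that the four independently determined ratios close up, $\frac{\tilde\rho_3}{\tilde\rho_4}\cdot\frac{\tilde\rho_4}{\tilde\rho_1}\cdot\frac{\tilde\rho_1}{\tilde\rho_2}\cdot\frac{\tilde\rho_2}{\tilde\rho_3}=1$, where each factor is the value forced by the corresponding projection. I would prove this closure by a Menelaus-type multiratio computation in the spirit of the proofs of Lemma~\ref{lem:4_cc_quads_consistency} and Lemma~\ref{lem:weak_c1_two_cubes_consistency}: introduce the auxiliary points where the planes $\inc{p_{ij},x_k,x_l}$ meet the edges and diagonals of the hexahedron, organize the length ratios into closed oriented polygons whose multiratios reduce to $1$ by Menelaus (coplanarity of the \emph{bottom} cross vertices) and by planarity of the vertex stars, and absorb the remaining factors using the constant–cross–ratio property of an A-cube recorded in Fig.~\ref{fig:cross_ratios_a_cube}. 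An equivalent algebraic route is a Lelieuvre computation in the style of Lemma~\ref{lem:local_c1_rho}: rewrite each coplanarity as a relation on the normals $n_i,\tilde n_i$, pass to the Moutard coefficients, and invoke the compatibility \eqref{eq:ste}–\eqref{eq:moutard_constant_ratio}, which is exactly the incidence closure (Cox) of the A-cube.

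For the Weingarten property of the centres I would exploit the explicit description of the cross planes. Writing points in barycentric coordinates relative to the bottom tetrahedron $x_1,\dots,x_4$, the bottom cross plane is $\Pi:\ \tfrac{c_1}{\rho_1}-\tfrac{c_2}{\rho_2}+\tfrac{c_3}{\rho_3}-\tfrac{c_4}{\rho_4}=0$ and its centre is the point $[\rho_1:\rho_2:\rho_3:\rho_4]$, which indeed lies on $\Pi$; the analogous statements hold on the top face for $\tilde\Pi$ and $\tilde q$. Using the planar–vertex–star conditions of the A-cube to locate the $\tilde x_i$ and the ratio relations obtained in the closure step to pin down the $\tilde\rho_i$, I would verify $q\in\tilde\Pi$ and $\tilde q\in\Pi$; since $q\in\Pi$ and $\tilde q\in\tilde\Pi$ hold automatically, both centres then lie on the line $\Pi\cap\tilde\Pi$, which establishes \eqref{eq:weingarten_property_centres}. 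Conceptually the same conclusion follows from observing that the cross lines are rulings, so the four cross-vertex Weingarten properties \eqref{eq:weingarten_property_cross_vertices} force the corresponding pairs of rulings $\inc{p_{12},p_{34}},\inc{\tilde p_{12},\tilde p_{34}}$ and $\inc{p_{23},p_{41}},\inc{\tilde p_{23},\tilde p_{41}}$ to be coplanar and hence to meet; intersecting them yields $q,\tilde q$ on the common line of the two cross planes, as needed for Corollary~\ref{cor:w_cubes_existence}.

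The main obstacle is the closure identity in the second step. Unlike the consistency results already available, the propagation here is \emph{diagonal}, linking opposite edges of the two opposite faces, so it does not reduce to the $2$D or $3$D consistency of \eqref{eq:rho_evolution_md}; it genuinely requires the full incidence structure of a hexahedron of an A-net (equivalently, Cox' theorem). The delicate part is the bookkeeping: keeping track of orientations in the Menelaus multiratios (or, algebraically, of the skew symmetry and signs of the Moutard coefficients) so that every auxiliary factor cancels and the product collapses to $1$. This is precisely the step that, as anticipated in the introduction, makes the present existence proof substantially more involved than its classical and discrete counterparts.
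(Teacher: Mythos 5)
Your decomposition (well-definedness of the four propagated points via Lemma~\ref{lem:c1_identity}, then coplanarity, then the centre statement) matches the paper's, and you correctly locate the difficulty in the coplanarity step. But that decisive step is only announced, not proved: both routes you offer are described at the level of ``I would organize the multiratios so that everything cancels,'' and, as you yourself observe, the propagation here is diagonal, so it does \emph{not} reduce to the consistency arguments of Lemmas~\ref{lem:4_cc_quads_consistency} or \ref{lem:weak_c1_two_cubes_consistency}. The paper's proof supplies the missing structural idea that makes the closure tractable: two successive Cox deformations (Fig.~\ref{fig:cox_deformation_twice}) show that the loop $(p_{12},\tilde p_{12},\tilde p_{34},p_{34})$ refines the original A-cube into two smaller A-cubes $\mathcal{C}_{41}$ and $\mathcal{C}_{23}$, and the $C^1$-identity applied to \emph{these refined cubes}, with the bottom centre $q$ now playing the role of a cross vertex, yields in one stroke both the coplanarity of $\tilde p_{12},\tilde p_{23},\tilde p_{34},\tilde p_{41}$ and the key fact $q\in\tilde\Pi$. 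Your second sketched route (Lelieuvre normals and \eqref{eq:ste}--\eqref{eq:moutard_constant_ratio}) is indeed viable --- it is exactly the computation of Lemma~\ref{lem:w_cube_m_coeffs_in_rho}, which the paper records in Remark~\ref{rem:weingarten_propagation_consistency_algebraic} as an algebraic proof of the consistency of \eqref{eq:weingarten_propagation_in_rho} --- but it, too, is only named, and it settles the closure of the cross rather than the assertion \eqref{eq:weingarten_property_centres} about the centres.

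The ``conceptual'' argument you offer for the centres is moreover incorrect. From \eqref{eq:weingarten_property_cross_vertices}, the tangent plane $\inc{x_1,x_2,p_{34}}$ contains $p_{12},p_{34},\tilde p_{12}$ and the tangent plane $\inc{\tilde x_1,\tilde x_2,\tilde p_{34}}$ contains $\tilde p_{12},\tilde p_{34},p_{12}$; these are two distinct planes through the line $\inc{p_{12},\tilde p_{12}}$, and neither forces the quadruple $p_{12},p_{34},\tilde p_{34},\tilde p_{12}$ to be coplanar. Indeed, this quadruple is a lateral face of the refined A-cube $\mathcal{C}_{23}$ and is generically \emph{skew}: the rulings $\inc{p_{12},p_{34}}$ and $\inc{\tilde p_{12},\tilde p_{34}}$ are corresponding asymptotic lines of a Weingarten pair and do not meet, just as the corresponding edges $\inc{x_1,x_2}$ and $\inc{\tilde x_1,\tilde x_2}$ do not. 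Hence $q$ and $\tilde q$ cannot be obtained by intersecting corresponding rulings. What is actually true, and what the paper proves, is that each top cross line meets the \emph{bottom cross plane} $\Pi$, and a short genericity argument (if $\Pi\cap\inc{\tilde p_{12},\tilde p_{34}}$ and $\Pi\cap\inc{\tilde p_{23},\tilde p_{41}}$ were distinct one would be forced into $\Pi=\tilde\Pi$) identifies that intersection point with the top centre, giving $\inc{q,\tilde q}=\Pi\cap\tilde\Pi$. Your barycentric plan could in principle recover this, but as written the proposal leaves both halves of the theorem unproven.
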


\begin{proof}
According to the $C^1$-identity (Lemma~\ref{lem:c1_identity}), propagation of
the initial cross $(p_{12}, p_{23}, p_{34}, p_{41})$ as described in
Remark~\ref{rem:c1_identity_projection} yields four well-defined points
$(\tilde p_{12},\tilde p_{23},\tilde p_{34},\tilde p_{41})$. We have to show
that these points are vertices of a cross, that is, that they are coplanar.

Consider Fig.~\ref{fig:weingarten_cube}. In a first step, we demonstrate that
the loop $(p_{12},\tilde p_{12},\tilde p_{34},p_{34})$ yields a refinement of
the original A-cube $\mathcal{C} = (x_1,\dots,x_4,\tilde x_1,\dots,\tilde x_4)$
into the two smaller A-cubes $\mathcal{C}_{41} = (x_1,p_{12},p_{34},x_4,$
$\tilde x_1,\tilde p_{12},\tilde p_{34},\tilde x_4)$ and $\mathcal{C}_{23} =
(x_2,p_{12},p_{34},x_3,\tilde x_2,\tilde p_{12},\tilde p_{34},\tilde x_3)$.
For symmetry reasons, it is sufficient to show that $\mathcal{C}_{23}$ is an
A-cube. This assertion, in turn, is true since $\mathcal{C}_{23}$ can be
obtained from $\mathcal{C}$ by applying two Cox deformations as illustrated in
Fig.~\ref{fig:cox_deformation_twice}.

\begin{figure}[htb]
\begin{center}
 \input{ 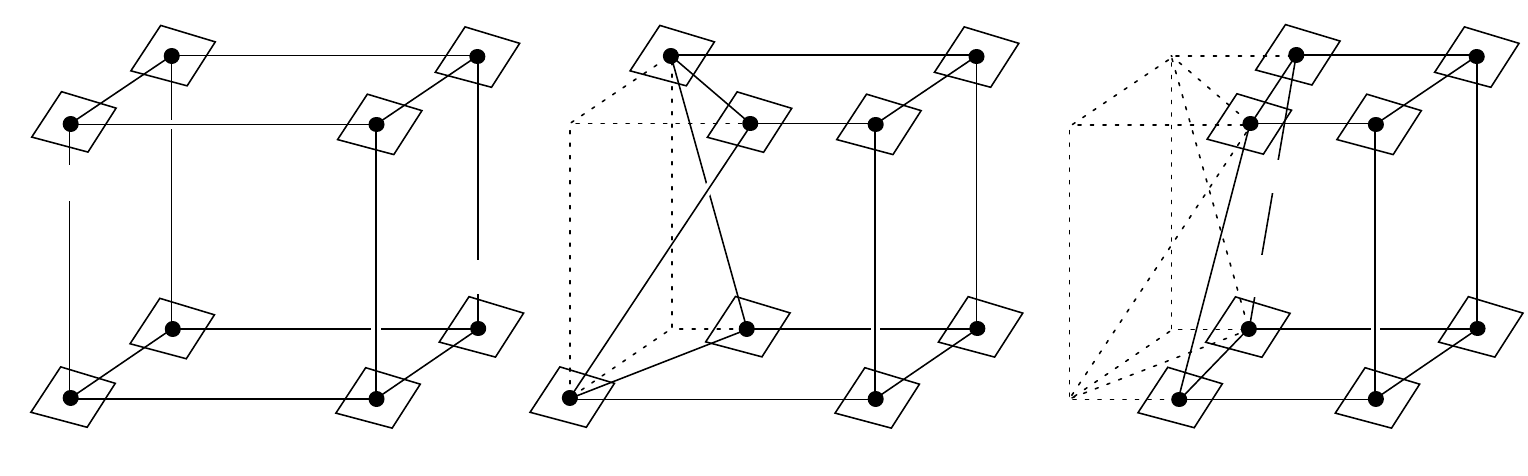_t } 
\end{center}
\caption{Successive application of two Cox deformations.}
\label{fig:cox_deformation_twice}
\end{figure}

Next, we define
\begin{equation*}
\Pi = \inc{p_{12},p_{23},p_{34},p_{41}},
\quad
\tilde q = \Pi \cap \inc{\tilde p_{12},\tilde p_{34}},
\quad
\tilde \Pi = \inc{q,\tilde p_{12},\tilde p_{34}}.
\end{equation*}
The points $p_{23}$ and $\tilde
p_{41}$ are related by the $C^1$-identity with respect to the A-cube
$\mathcal{C}$, i.e., $\Pi_{41} = \inc{p_{23},x_1,x_4,\tilde p_{41}}$ is a
plane.  Since $p_{41},p_{23} \in \Pi_{41}$, we also have $q \in \Pi_{41}$.
This shows that $q$ and $\tilde p_{41}$ are related by the $C^1$-identity with
respect to the A-cube $\mathcal{C}_{41}$. Analogously, $q$ and $\tilde
p_{23}$ are related by the $C^1$-identity with respect to $\mathcal{C}_{23}$.
We may state this as
\begin{equation*}
\tilde p_{41} = \inc{\tilde x_1,\tilde x_4} \cap \tilde \Pi,
\quad
\tilde p_{23} = \inc{\tilde x_2,\tilde x_3} \cap \tilde \Pi,
\end{equation*}
which shows that
\begin{equation*}
\tilde \Pi = \inc{\tilde p_{12},\tilde p_{23},\tilde p_{34},\tilde p_{41}}.
\end{equation*}
In particular, the points $\tilde p_{12},\tilde p_{23},\tilde p_{34},\tilde p_{41}$
are coplanar and hence define a cross adapted to
$(\tilde x_1,\tilde x_2,\tilde x_3,\tilde x_4)$.
Now, we will demonstrate that, indeed,
\begin{equation*}
\tilde q = \inc{\tilde p_{12},\tilde p_{41}} \cap \inc{\tilde p_{23},\tilde p_{34}},
\end{equation*}
i.e., $\tilde q$ is the centre of the top cross as suggested by
Fig.~\ref{fig:weingarten_cube}.  For now, we denote the centre of the top cross by
$\tilde c$.  We have already established that each cross determines the other
cross according to \eqref{eq:weingarten_property_cross_vertices}, and that, if
we propagate from bottom to top, the center of the bottom cross is contained in
the plane of the top cross, that is, $q \in \tilde \Pi$.  For symmetry reasons, we also
have $\tilde c \in \Pi$ and, in particular,
\begin{equation}
\inc{q,\tilde c} = \Pi \cap \tilde \Pi.
\label{eq:w_cubes_intersection_cross_planes}
\end{equation}
Furthermore, we define $\bar q = \Pi \cap \inc{\tilde p_{23},\tilde p_{41}}$ and assume
that $\tilde q \ne \bar q$.  In this case, we have three distinct points
$\tilde c,\tilde q,\bar q \in \Pi$ that span the plane of the top cross, i.e.,
$\tilde \Pi = \inc{\tilde c,\tilde q,\bar q}$.  This, in turn, implies the
degenerate case $\Pi = \tilde \Pi$. Therefore, generically, $\tilde q = \bar q =
\tilde c$ and \eqref{eq:w_cubes_intersection_cross_planes} becomes
\eqref{eq:weingarten_property_centres}.
\end{proof}

\begin{definition}[Weingarten cube / Weingarten propagation of crosses]
An A-cube with two crosses adapted to opposite faces as in
Fig.~\ref{fig:weingarten_cube} is called a \emph{Weingarten cube} if the
Weingarten property \eqref{eq:weingarten_property_cross_vertices} is satisfied
by any pair $p_{ij},\tilde p_{ij}$ of corresponding cross vertices.  According
to Theorem~\ref{thm:strong_c1_cube_consistency}, an A-cube with a cross
attached to one face can be extended uniquely to a Weingarten cube by, for
instance, using the projections described in
Remark~\ref{rem:c1_identity_projection}. We refer to this extension as
\emph{Weingarten propagation} of the initial cross.
\label{def:weingarten_cube}
\end{definition}

\begin{remark}
A Weingarten cube determines a unique adapted hyperboloid for each face.  The
data needed to extend a Weingarten cube to a Blaschke cube are one point on a
``vertical'' (extended) edge (see Fig.~\ref{fig:extension_weingarten_to_blaschke}).
This yields four crosses adapted to the vertical faces that are
composed of asymptotic lines of the vertical hyperboloids.

\begin{figure}[htb]
\begin{center}
 \input{ 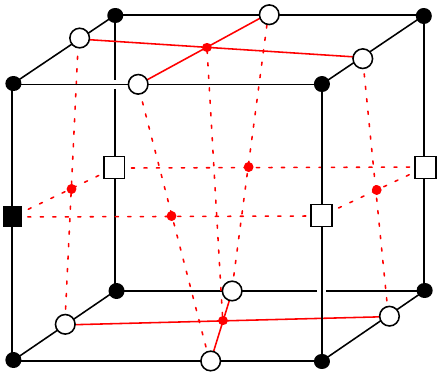_t } 
\end{center}
\caption{\textup{Extension of a Weingarten cube to a Blaschke cube}.}
\label{fig:extension_weingarten_to_blaschke}
\end{figure}

According to Theorem~\ref{thm:strong_c1_cube_consistency}, it is possible to
have two ``$C^1$-loops'' of crosses (or adapted hyperboloids) around an A-cube.
However, it is not possible to have a third $C^1$-loop around the cube composed
of the crosses associated with the vertical faces.  The reason is that,
considering three faces adjacent to one vertex, one would have the
$C^1$-condition fulfilled around a vertex of degree 3. This would contradict
the fact that interior vertices of a pre-hyperbolic net have to be of even
degree.  In particular, Blaschke cubes that are Weingarten cubes in multiple
ways do not exist since at most one pair of opposite crosses can be related by
the Weingarten property.

Finally note that, if we prescribe which pair of opposite faces has to satisfy the
Weingarten property, a single hyperboloid of a Weingarten cube uniquely
determines all other hyperboloids according to the $C^1$-condition.
\label{rem:no_three_c1_loops}
\end{remark}

\begin{corollary}
Hyperboloids corresponding to crosses of a Weingarten cube form a (classical)
Weingarten pair.  In particular, the cross centres $q$ and $\tilde q$ are
corresponding points.
\label{cor:w_cubes_existence}
\end{corollary}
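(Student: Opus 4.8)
The plan is to exhibit the Weingarten transformation explicitly and then reduce the claim to a low-degree polynomial identity that is forced by the nine incidences already established. Recall from Section~\ref{subsec:crisscrossed_quads} (and the rational bilinear patch description) that a cross adapted to $Q=(x_1,x_2,x_3,x_4)$ endows the hyperboloid $h$ with a rational bilinear parametrization $y:[0,1]^2\to\R^3$ whose boundary rulings are the edges of $Q$ and whose $\tfrac12$-parameter lines are the two crossing lines; in these coordinates the corners, edge midpoints and centre of the patch are precisely $x_1,\dots,x_4$, the cross vertices $p_{12},p_{23},p_{34},p_{41}$, and the centre $q$, sitting at the nine grid points $\{0,\tfrac12,1\}^2$. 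Fixing the analogous parametrization $\tilde y$ of $\tilde h$ for which corresponding rulings carry the same parameter, I would define $T\colon h\to\tilde h$ by $T(y(u,v))=\tilde y(u,v)$. By construction $T$ maps asymptotic lines to asymptotic lines and restricts to the known correspondence on the nine grid points, where, by Theorem~\ref{thm:strong_c1_cube_consistency} together with \eqref{eq:weingarten_property_cross_vertices}, the Weingarten property already holds. It then remains to propagate the Weingarten property to every parameter value $(u,v)$.

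The next step is to lift the two patches to bilinear homogeneous maps $Y,\tilde Y:[0,1]^2\to\R^4\setminus\{0\}$, so that $Y=c_{00}+c_{10}u+c_{01}v+c_{11}uv$ with $c_{ij}\in\R^4$, and likewise for $\tilde Y$. In $\RP^3$ the tangent plane of $h$ at $y(u,v)$ is the projective span of $Y,Y_u,Y_v$, so the connecting line $\inc{y(u,v),\tilde y(u,v)}$ lies in this plane if and only if
\begin{equation*}
P_1(u,v):=\det(Y,\,Y_u,\,Y_v,\,\tilde Y)=0,
\end{equation*}
and, symmetrically, it lies in the tangent plane of $\tilde h$ if and only if $P_2(u,v):=\det(\tilde Y,\,\tilde Y_u,\,\tilde Y_v,\,Y)=0$. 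This is the homogeneous reading of the coplanarity $C^1$-condition of Lemma~\ref{lem:local_c1}. Whenever the two tangent planes are distinct, the simultaneous vanishing of $P_1$ and $P_2$ says exactly that the connecting line is their intersection, i.e.\ that the Weingarten property holds. Thus the corollary reduces to showing $P_1\equiv P_2\equiv 0$ from their known vanishing on the nine grid points.

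The decisive --- and only delicate --- step is the degree count. Since $Y$ has bidegree $(1,1)$, $Y_u$ bidegree $(0,1)$ and $Y_v$ bidegree $(1,0)$, the multilinear determinant $P_1$ has \emph{a priori} bidegree $(3,3)$, for which nine points would not suffice. I expect this to be the main obstacle, and I would resolve it as follows: the coefficient of $u^3$ in $P_1$ can only be assembled by letting $Y_v$ contribute its $u$-term $c_{11}u$ while $Y$ and $\tilde Y$ each contribute a $u$-term; tracking the remaining options shows that every surviving monomial places the twist vector $c_{11}$ into two of the four columns, so the determinant vanishes. Hence the $u^3$-coefficient of $P_1$ is identically zero, and by the $u\leftrightarrow v$ symmetry so is its $v^3$-coefficient; the same argument applies to $P_2$. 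Therefore $P_1,P_2$ have bidegree at most $(2,2)$, and a polynomial $\sum_{0\le i,j\le 2}a_{ij}u^iv^j$ vanishing on $\{0,\tfrac12,1\}^2$ vanishes identically: fixing each $v\in\{0,\tfrac12,1\}$ kills the resulting degree-two $u$-polynomial, and letting $v$ range then kills every coefficient. This yields $P_1\equiv P_2\equiv 0$, so for every $(u,v)$ the connecting line lies in both tangent planes; at the generic parameters where these planes differ their intersection is exactly that line, which is the Weingarten property, and it then holds on all of $h$. Consequently $T$ is a classical Weingarten transformation of $h$ onto $\tilde h$, and in particular $q=y(\tfrac12,\tfrac12)$ and $\tilde q=\tilde y(\tfrac12,\tfrac12)$ are corresponding points.
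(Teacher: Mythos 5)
Your argument is correct, but it takes a genuinely different route from the paper. The paper's proof stays entirely inside the cross--propagation machinery: a Weingarten cube determines adapted hyperboloids on all six faces, an arbitrary point $q\in h$ is the centre of a unique cross on $h$ (the two rulings through $q$), this cross is transported along the asymptotic lines of the four ``vertical'' hyperboloids to a cross on $\tilde h$ related to it by Weingarten propagation, and \eqref{eq:weingarten_property_centres} of Theorem~\ref{thm:strong_c1_cube_consistency} --- now applied to a continuously varying family of crosses --- yields the Weingarten property at every pair of centres. You instead invoke Theorem~\ref{thm:strong_c1_cube_consistency} only once, for the given cross, and close the argument analytically: the Weingarten property at the nine pairs of marked points (corners by planarity of vertex stars, cross vertices by \eqref{eq:weingarten_property_cross_vertices}, centres by \eqref{eq:weingarten_property_centres}) forces $P_1,P_2$ to vanish on $\{0,\tfrac12,1\}^2$, and your degree reduction from $(3,3)$ to $(2,2)$ then kills them identically. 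That reduction is the one step worth polishing: the cleanest way to see that the $u^3$-coefficient of $P_1$ vanishes is to note that the coefficient of $u$ in $Y$ is exactly $Y_u=c_{10}+c_{11}v$, so the first two columns of $\det\bigl([Y]_{u},\,Y_u,\,[Y_v]_{u},\,[\tilde Y]_{u}\bigr)$ coincide; your ``$c_{11}$ in two columns'' bookkeeping overlooks the monomial in which $c_{10}$ is the repeated vector, though that term of course also vanishes. What each approach buys: the paper's is coordinate-free and exhibits the Weingarten correspondence through the geometry of the cube itself, whereas yours makes explicit exactly how much incidence data suffices (nine points against bidegree $(2,2)$) and is self-contained once one grants the weighted bilinear lift $Y=\sum_i\rho_i\hat x_i\,\beta_i(u,v)$, under which the cross vertices and the centre indeed sit at the half-integer parameters. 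Both constructions produce the same ruling-preserving correspondence (they agree on three rulings of each regulus), so your identification of $q=y(\tfrac12,\tfrac12)$ and $\tilde q=\tilde y(\tfrac12,\tfrac12)$ as corresponding points matches the paper's conclusion.
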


\begin{proof}
A cube with crosses attached to two opposite faces, top and bottom as in
Fig.~\ref{fig:weingarten_cube}, determines hyperboloids $h, \tilde h$ adapted
to those faces, and four hyperboloids adapted to vertical faces.  If
the crosses describe a Weingarten cube then any of the six adapted hyperboloids
determines the remaining ones uniquely (cf.
Remark~\ref{rem:no_three_c1_loops}).  Any point $q \in h$ is the centre of
a unique cross $c \subset h$ which determines, following asymptotic lines of the vertical
hyperboloids, a unique corresponding cross $\tilde c \subset \tilde h$ with
centre $\tilde q$. On noting that $c$ and $\tilde c$ are related by the Weingarten
propagation, the claim follows directly from
\eqref{eq:weingarten_property_centres}.
\end{proof}

It is evident that if both crosses of a Weingarten cube are internal then
Corollary~\ref{cor:w_cubes_existence} remains valid if one replaces
``hyperboloids'' by ``hyperboloid patches''.  Hence, the preceding analysis suggests 
regarding a Weingarten pair of (pre-)hyperbolic nets as being composed of
Weingarten cubes.

\begin{definition}[Weingarten transformation]
Two (pre-)hyperbolic nets $f=(x,\rho)$ and $\tilde f = (\tilde x, \tilde \rho)$
are said to be related by a \emph{Weingarten transformation} if 
\begin{enumerate}[\quad i)]
\item 
the supporting A-nets $x$ and $\tilde x$ form a (discrete) Weingarten pair, and
\item 
crosses adapted to corresponding quadrilaterals $Q$ and $\tilde Q$ of $x$ and $\tilde x$ are
related by the Weingarten propagation, i.e., $Q$ and $\tilde Q$ equipped with these crosses
are opposite faces of a Weingarten cube (see Definition~\ref{def:weingarten_cube})
so that the corresponding hyperboloids (patches) form a classical Weingarten pair.
\end{enumerate}
The nets $f$ and $\tilde f$ are said to form a \emph{Weingarten pair}
and $\tilde f$ is called a \emph{Weingarten transform} of $f$.
\label{def:weingarten_trafo}
\end{definition}

\paragraph{Relation between B\"acklund and Weingarten transformations
of pre-hyperbolic nets.}
Even though Definition~\ref{def:weingarten_trafo} is meaningful locally, {\em a priori}, it is
not evident that it is possible for all cubes to be simultaneously of Weingarten type. 
While the analysis of Weingarten pairs of hyperbolic nets is
more involved, the existence of Weingarten
pairs of pre-hyperbolic nets is easily established based on the B\"acklund transformation
introduced in the preceding. Here, the key is

\begin{proposition}
Let $(x,\rho)$ and $(\tilde x, \tilde \rho)$ be a B\"acklund pair of
pre-hyperbolic nets. If one cube with opposite faces $Q,\tilde Q$ as depicted
in Fig.~\ref{fig:cauchy_data_weak_baecklund} is a Weingarten cube, then
$(x,\rho)$ and $(\tilde x, \tilde \rho)$ form a Weingarten pair.  Conversely, 
if $(x,\rho)$ and $(\tilde x, \tilde \rho)$ form a Weingarten pair then 
$(\tilde x,\tilde \rho)$ and $(x,\rho)$
are B\"acklund-related.
\label{prop:relation_baecklund_weingarten_prehyp}
\end{proposition}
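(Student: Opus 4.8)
The plan is to reduce both implications to the elementary consistency statement of Lemma~\ref{lem:4_cc_quads_consistency}, exploiting that the two-layer net $X$ built from $x$ and $\tilde x$ is a genuine three-dimensional A-net: its vertex stars are planar because $x,\tilde x$ form a discrete Weingarten pair. I regard the crosses on \emph{all} faces of $X$ --- the horizontal faces $Q,\tilde Q$ of the two layers and the vertical faces over the edges of the supporting quad-graph --- as encoded by the single function $\rho$ on the vertices of $X$, so that edge-adjacent crosses automatically share their cross vertex on the common edge. Accordingly I distinguish three types of pairwise $C^1$-conditions between faces sharing an edge: \emph{horizontal--horizontal} (two faces of one layer), \emph{vertical--vertical} (two vertical faces meeting along a vertical edge), and \emph{vertical--horizontal} (a vertical face and a horizontal one). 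By the reformulation of the Weingarten property \eqref{eq:weingarten_property_cross_vertices} given in the discussion preceding Lemma~\ref{lem:c1_identity}, a cube is a Weingarten cube exactly when each of its four vertical faces satisfies the $C^1$-condition with both its top and its bottom face, i.e.\ when all of its vertical--horizontal conditions hold. Since $x,\tilde x$ are pre-hyperbolic nets, all horizontal--horizontal conditions hold automatically; and the pair is B\"acklund-related precisely when all vertical--vertical conditions hold, since that is what imposing the $C^1$-condition on every vertical $2$-dimensional layer amounts to.

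The single tool driving everything is as follows. Let $v$ be an interior vertex of the supporting quad-graph, shared by the bottom faces $Q,Q'$ of two edge-adjacent cubes $\mathcal C,\mathcal C'$. Around $v$ the four faces $Q,Q'$ together with the two vertical faces $W\subset\mathcal C$, $W'\subset\mathcal C'$ meeting $v$ and flanking the shared vertical face form a cyclically closed disk of four quadrilaterals. As the whole vertex star of $v$ in the A-net $X$ is planar, this is a planar degree-four configuration to which Lemma~\ref{lem:4_cc_quads_consistency} applies verbatim (even when $v$ has higher even degree, since one uses only a four-face sub-star). Its four interior edges carry, respectively, the vertical--horizontal condition $C^1(Q,W)$ of $\mathcal C$, the horizontal--horizontal condition $C^1(Q,Q')$, the vertical--horizontal condition $C^1(Q',W')$ of $\mathcal C'$, and the vertical--vertical condition $C^1(W,W')$. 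Hence any three of these four conditions force the fourth.

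For the converse this is immediate: assuming every cube is a Weingarten cube, all vertical--horizontal conditions hold, and together with the horizontal--horizontal ones the disk relation yields every vertical--vertical condition; combined with the fact that $x,\tilde x$ form a discrete Weingarten pair (part of Definition~\ref{def:weingarten_trafo}), the pair is B\"acklund-related. For the forward implication I would propagate Weingarten-ness over the connected dual graph of cubes. Suppose $\mathcal C$ is a Weingarten cube and $\mathcal C'$ is edge-adjacent, sharing the vertical face $V$. Applying the disk relation at the four endpoints of the shared bottom and top edges gives the vertical--horizontal conditions of $\mathcal C'$ for the two faces flanking $V$, the three remaining inputs being the Weingarten conditions of $\mathcal C$, the horizontal conditions, and the vertical--vertical conditions supplied by the B\"acklund hypothesis. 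The condition for $V$ itself follows from the observation that the three faces $Q,Q',V$ meeting along the shared bottom edge have their opposite cross vertices coplanar with that edge as soon as two of their three pairwise $C^1$-conditions hold --- a one-line consequence of Lemma~\ref{lem:local_c1} --- so that $C^1(Q,V)$ (from $\mathcal C$ Weingarten) and $C^1(Q,Q')$ (horizontal) give $C^1(Q',V)$, and likewise along the top. Finally the conditions for the vertical face of $\mathcal C'$ opposite to $V$ come for free through the $C^1$-identity of Lemma~\ref{lem:c1_identity}, which links the conditions of opposite vertical faces. Thus $\mathcal C'$ is a Weingarten cube, and propagation across the connected supporting surface makes every cube Weingarten.

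The hard part, and the reason the forward direction is genuinely more delicate than the converse, is exactly this assembly: one disk application controls only the two vertical faces flanking the shared face, so verifying the \emph{full} Weingarten property of the neighbouring cube requires separately handling the shared face (by the three-faces-at-an-edge coplanarity) and its opposite face (by the $C^1$-identity), while checking that none of these steps covertly presupposes the Weingarten property of a further, not-yet-treated neighbour. I would also confirm that the degenerate configurations excluded in Theorem~\ref{thm:strong_c1_cube_consistency} do not arise, so that the propagated data define honest crosses rather than collapsing.
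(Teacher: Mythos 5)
Your proposal is correct and takes essentially the same route as the paper: it reduces the Weingarten-cube property of a neighbouring cube to pairwise $C^1$-conditions and establishes them using exactly the paper's three ingredients --- the four-quadrilateral consistency of Lemma~\ref{lem:4_cc_quads_consistency} around the endpoints of the shared edges, the transitivity of the $C^1$-condition along a common edge (Lemma~\ref{lem:strong_c1_transitive}, which you correctly re-derive from Lemma~\ref{lem:local_c1}), and the $C^1$-identity of Lemma~\ref{lem:c1_identity} --- with the converse obtained by running the disk relation backwards. The only difference is bookkeeping: you verify all eight vertical--horizontal conditions directly, whereas the paper checks four and invokes the $C^1$-identity for the rest.
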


Proposition~\ref{prop:relation_baecklund_weingarten_prehyp} follows
almost immediately from the observation captured in the following

\begin{lemma}
In the sense of  Fig.~\ref{fig:transitive_quad_conditions},
the $C^1$-condition is transitive for crisscrossed quadrilaterals that
share an edge.
\label{lem:strong_c1_transitive}
\end{lemma}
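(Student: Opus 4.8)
The plan is to reduce the pairwise $C^1$-condition between crisscrossed quadrilaterals sharing a common edge to the equality of a single plane attached to each quadrilateral, after which transitivity becomes nothing but transitivity of equality. Concretely, let $Q_1,Q_2,Q_3$ be crisscrossed quadrilaterals sharing the edge $\inc{b,d}$ as in Fig.~\ref{fig:transitive_quad_conditions}, with adapted hyperboloids $h_1,h_2,h_3$. By the discussion preceding Lemma~\ref{lem:local_c1} (cf.\ Fig.~\ref{fig:c1_condition_coplanar_points}), any two of these hyperboloids are automatically tangent at the endpoints $b$ and $d$, so that $C^1(Q_i,Q_j)$ holds if and only if the tangent planes of $h_i$ and $h_j$ at the common cross vertex $y$ on $\inc{b,d}$ coincide.

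First I would attach to each $Q_i$ the tangent plane $T_i$ of $h_i$ along the shared edge at $y$. Writing $T_i=\inc{a_i,b,d}$, where $a_i$ is the point on the transverse ruling of $h_i$ through $y$ that is cut out on the opposite edge (the far cross vertex of $Q_i$, playing the role of $a$, resp.\ $c$, in Lemma~\ref{lem:local_c1}), that lemma says precisely that $C^1(Q_i,Q_j)$ is equivalent to the coplanarity of $a_i,b,d,a_j$, i.e.\ to $a_j\in T_i$. Since $b,d$ are distinct and, by the genericity assumption on crosses, $a_i\notin\inc{b,d}$, the plane $T_i$ is the unique plane containing the line $\inc{b,d}$ and the point $a_i$; hence $a_j\in T_i$ is equivalent to $T_i=T_j$. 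Thus the $C^1$-relation among crisscrossed quadrilaterals sharing $\inc{b,d}$ is literally the relation $T_i=T_j$ between the associated tangent planes.

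With this reformulation the claim is immediate: from $C^1(Q_1,Q_2)$ and $C^1(Q_2,Q_3)$ we obtain $T_1=T_2$ and $T_2=T_3$, whence $T_1=T_3$, which is $C^1(Q_1,Q_3)$. I would also remark, following Remark~\ref{rem:local_c1_projection}, that the plane $T_i$ — and therefore the entire argument — is independent of the shared cross vertex $y$; accordingly the same chain of equalities can be produced purely algebraically from the $\rho$-form of the $C^1$-condition in Lemma~\ref{lem:local_c1_rho}, the geometric equality $T_i=T_j$ being replaced by the corresponding ratio identities.

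The point to highlight is that there is no genuine obstacle here: the only content is the observation that the pairwise $C^1$-condition \emph{factors through} a single plane $T_i$ per quadrilateral, so that a seemingly relational condition collapses to an equality. The single hypothesis that must be recorded is the genericity $a_i\notin\inc{b,d}$ (the cross is non-degenerate and its vertices are distinct from the endpoints $b,d$), which is exactly what guarantees that $T_i$ is well defined as the unique plane through $\inc{b,d}$ and $a_i$; transitivity of the $C^1$-condition then follows from transitivity of $=$.
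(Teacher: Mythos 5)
Your proof is correct and takes exactly the route the paper intends: the lemma is stated there without proof as an ``observation'', precisely because the $C^1$-condition between crisscrossed quadrilaterals sharing the edge $\inc{b,d}$ reduces, via Lemma~\ref{lem:local_c1} and Remark~\ref{rem:local_c1_projection}, to equality of the single tangent plane $\inc{a_i,b,d}$ attached to each quadrilateral, so that transitivity is just transitivity of equality (equivalently, all far cross vertices lie in one plane through $\inc{b,d}$, as in the caption of Fig.~\ref{fig:transitive_quad_conditions}). Your explicit recording of the genericity hypothesis $a_i\notin\inc{b,d}$, which makes $T_i$ well defined, is the only ingredient the paper leaves unstated.
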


\begin{figure}[htb]
\begin{center}
 \input{ 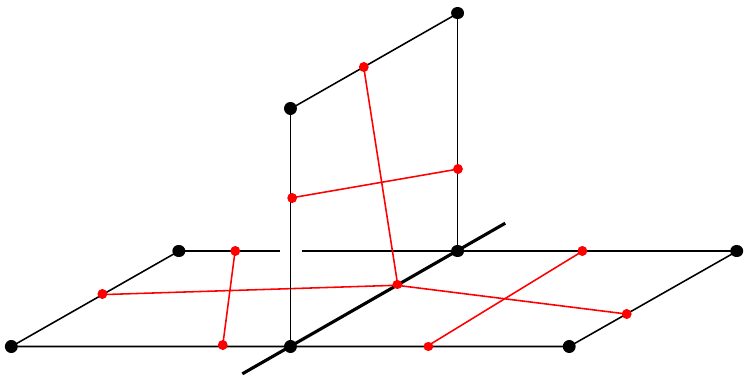_t } 
\end{center}
\caption{ Crisscrossed quadrilaterals sharing an edge.  If hyperboloids adapted
to $Q_1$ and $Q_2$ are tangent along the extended edge $e$ and the same holds
for $Q_2$ and $Q_3$ then the hyperboloids adapted to $Q_1$ and $Q_3$ are likewise
tangent along $e$. In particular, the four cross vertices $p,p_1,p_2,p_3$ are coplanar. }
\label{fig:transitive_quad_conditions}
\end{figure}

\begin{remark}
Lemma~\ref{lem:strong_c1_transitive} shows that
Lemma~\ref{lem:weak_c1_two_cubes_consistency} is a direct consequence of
Theorem~\ref{thm:strong_c1_cube_consistency}. Furthermore, iterative
application of the Weingarten transformation leads to nets for which the
$C^1$-condition is satisfied in all coordinate planes.  Thus, in the case of
Weingarten transformations, Remark \ref{rem:twofold_baecklund_trafo} becomes
obsolete.
\end{remark}

\begin{remark}
It is not difficult to check, e.g., by considering parallel invariants,
that equi-twist is transitive in the same sense as the $C^1$-condition.
\label{rem:equi_twist_transitive}
\end{remark}

\begin{titleproof}{Proof of Proposition~\ref{prop:relation_baecklund_weingarten_prehyp}}
Consider adjacent cubes of a B\"acklund pair $(x,\rho), (\tilde x, \tilde \rho)$ as
depicted in Fig.~\ref{fig:propagation_of_weingarten_property} and denote the
constituent quadrilaterals by
\begin{equation*}
(x_1,x_2,x_5,x_6) \leftrightarrow Q_{1256},
\quad
(x_2,x_5,\tilde x_5,\tilde x_2) \leftrightarrow Q_{25\tilde5\tilde2},
\quad \text{etc.}
\end{equation*}

\begin{figure}[htb]
\begin{center}
 \input{ 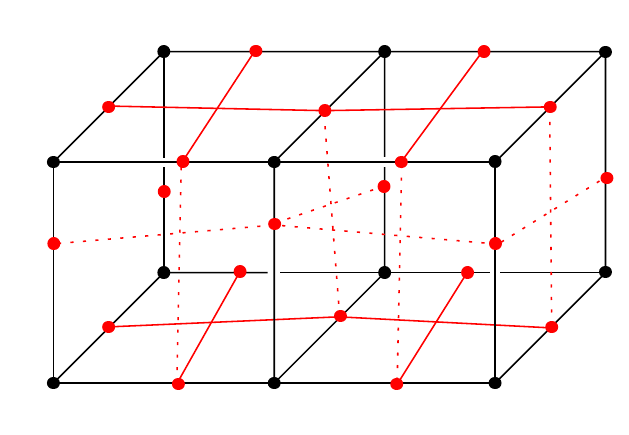_t } 
\end{center}
\caption{Adjacent cubes of a B\"acklund pair of pre-hyperbolic nets.}
\label{fig:propagation_of_weingarten_property}
\end{figure}

The values of $\rho$ and $\tilde \rho$ at the 12 vertices yield a unique cross
for each of the 11 elementary quadrilaterals.  Assuming that the left cube with
crosses adapted to opposite faces $Q_{1256}$ and
$Q_{\tilde1\tilde2\tilde5\tilde6}$ is a Weingarten cube, we now show that the
right cube is a Weingarten cube as well.  According to
Lemma~\ref{lem:c1_identity}, it is sufficient to show that for each pair
$(Q_{2345}, Q_{25\tilde5\tilde2})$, $(Q_{2345}, Q_{23\tilde3\tilde2})$,
$(Q_{\tilde2\tilde3\tilde4\tilde5}$, $Q_{25\tilde5\tilde2})$, and
$(Q_{\tilde2\tilde3\tilde4\tilde5}$, $Q_{23\tilde3\tilde2})$ of quadrilaterals
the corresponding crosses satisfy the $C^1$-condition.
For symmetry reasons, we have to consider only the two pairs containing, for
example, the quadrilateral $Q_{\tilde2\tilde3\tilde4\tilde5}$.  By assumption,
the crosses attached to $Q_{\tilde1\tilde2\tilde5\tilde6}$ and
$Q_{\tilde2\tilde3\tilde4\tilde5}$ satisfy the $C^1$-condition.  Since the left
cube is a Weingarten cube, also the crosses attached to
$Q_{\tilde1\tilde2\tilde5\tilde6}$ and $Q_{25\tilde5\tilde2}$ satisfy the
$C^1$-condition. Therefore, Lemma~\ref{lem:strong_c1_transitive} implies that
the same holds for the crosses attached to $Q_{\tilde2\tilde3\tilde4\tilde5}$
and $Q_{25\tilde5\tilde2}$.  On the other hand, the pairs of crosses adapted to
$(Q_{12\tilde2\tilde1},Q_{\tilde1\tilde2\tilde5\tilde6})$,
$(Q_{12\tilde2\tilde1},Q_{23\tilde3\tilde2})$, and $(Q_{\tilde1\tilde2\tilde5\tilde6},
Q_{\tilde2\tilde3\tilde4\tilde5})$ each satisfy the $C^1$-condition.
Thus, according to Lemma~\ref{lem:4_cc_quads_consistency}, also the crosses
attached to $Q_{\tilde2\tilde3\tilde4\tilde5}$ and $Q_{23\tilde3\tilde2}$
satisfy the $C^1$-condition. Reversing the above argument
shows that, indeed, every Weingarten pair of pre-hyperbolic
nets constitutes a B\"acklund pair.
\end{titleproof}

Proposition~\ref{prop:relation_baecklund_weingarten_prehyp} shows that
Weingarten pairs of pre-hyperbolic nets exist since Cauchy data $\tilde \rho$
at one initial quadrilateral $\tilde Q$ for a B\"acklund transformation as
depicted in Fig.~\ref{fig:cauchy_data_weak_baecklund} can be chosen in such a manner that
the initial cube constitutes a Weingarten cube.  We obtain the following constructive
description of Weingarten transformations of pre-hyperbolic nets.

\begin{theorem}
Let $f=(x,\rho) : \Z^2 \to \R^3 \times \R$ be a pre-hyperbolic net and let
$\tilde x$  be a Weingarten transform of the A-surface $x$. The extension of
$\tilde x$ to a crisscrossed A-net $\tilde f = (\tilde x,\tilde \rho)$
according to Definition~\ref{def:weingarten_trafo}, ii), i.e.,
extending elementary hexahedra to Weingarten cubes, yields a Weingarten
transform $\tilde f$ of $f$. The Weingarten transform $\tilde f$ is uniquely
determined (modulo homogeneous rescaling of $\tilde \rho$) by its supporting
A-surface $\tilde x$.
\label{thm:weingarten_extension_is_consistent}
\end{theorem}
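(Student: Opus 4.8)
The plan is to identify the cube-by-cube Weingarten extension with a suitably chosen B\"acklund transform, thereby reducing the global consistency of the former to the already-established consistency of the latter. First I would record what happens cube by cube: viewing $\tilde x$ together with $x$ as a two-layer 3D A-net $X : \Z^2 \times \{0,1\} \to \R^3$, each elementary hexahedron is an A-cube whose bottom face $Q$ carries the cross encoded by $\rho$. By Theorem~\ref{thm:strong_c1_cube_consistency}, Weingarten propagation around the horizontal edges of this cube produces a unique cross on the opposite face $\tilde Q$, hence a value of $\tilde\rho$ on the vertices of $\tilde Q$ that is unique up to a common factor (Lemma~\ref{lem:cross_as_rhos}). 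The difficulty is that a vertex of $\tilde x$ is shared by several quadrilaterals, so one must show that the $\tilde\rho$-values prescribed by neighbouring A-cubes agree and assemble into a single crisscrossed A-surface satisfying the $C^1$-condition.

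To settle this, I would pass through the B\"acklund framework. A B\"acklund transform $(\tilde x,\tilde\rho)$ of $f$ is fixed by the values of $\tilde\rho$ on one quadrilateral $\tilde Q$, and the propagation of $\tilde\rho$ obtained by imposing the $C^1$-condition on vertical layers is globally consistent by Proposition~\ref{prop:weak_c1_consistency}, producing a pre-hyperbolic net. Now I would choose these Cauchy data to be precisely the cross on $\tilde Q$ obtained by Weingarten propagation of the initial cube over $Q$, so that this single cube is a Weingarten cube. Proposition~\ref{prop:relation_baecklund_weingarten_prehyp} then upgrades the entire B\"acklund pair to a Weingarten pair, i.e.\ every A-cube of $X$ becomes a Weingarten cube.

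It then remains to observe that this globally consistent B\"acklund transform coincides with the cube-by-cube Weingarten extension. Indeed, on every cube the resulting top cross satisfies the Weingarten property with respect to the bottom cross, and by the uniqueness assertion in Theorem~\ref{thm:strong_c1_cube_consistency} it must equal the cross produced by Weingarten propagation. Hence the cube-by-cube extension is globally well defined, $\tilde f = (\tilde x,\tilde\rho)$ is a pre-hyperbolic net, and by construction each pair of corresponding faces $Q,\tilde Q$ sits in a Weingarten cube, so $f$ and $\tilde f$ form a Weingarten pair in the sense of Definition~\ref{def:weingarten_trafo}. I expect this gluing/consistency step to be the main obstacle; the crucial leverage is that Proposition~\ref{prop:relation_baecklund_weingarten_prehyp} converts the purely local ``one Weingarten cube'' input into the global Weingarten property, which the per-cube construction then matches by uniqueness.

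For the uniqueness statement, I would note that, in contrast to a generic B\"acklund transform, no Cauchy data remain free once $\tilde x$ is prescribed: the cross on $\tilde Q$ is forced by Weingarten propagation from the cross on $Q$ that is already part of the given net $f$, so $\tilde\rho|_{\tilde Q}$ is determined up to a common factor. Propagating as above then determines $\tilde\rho$ everywhere up to a single overall constant, which is exactly the homogeneous rescaling $\tilde\rho \to c\tilde\rho$ that leaves every cross unchanged. This yields the asserted uniqueness of $\tilde f$ modulo homogeneous rescaling of $\tilde\rho$.
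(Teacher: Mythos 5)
Your proposal is correct and follows essentially the same route as the paper: the theorem is obtained there as an immediate consequence of Proposition~\ref{prop:relation_baecklund_weingarten_prehyp}, by choosing the Cauchy data of a B\"acklund transformation on one quadrilateral $\tilde Q$ so that the initial cube is a Weingarten cube, which then forces every cube to be of Weingarten type and pins down $\tilde\rho$ up to a homogeneous rescaling. Your additional step of matching the globally consistent B\"acklund transform with the cube-by-cube Weingarten propagation via the uniqueness in Theorem~\ref{thm:strong_c1_cube_consistency} is a useful explicit articulation of what the paper leaves implicit.
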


\begin{remark}
Proposition~\ref{prop:relation_baecklund_weingarten_prehyp} states that the
property of Blaschke cubes being Weingarten cubes propagates ``horizontally''
in a crisscrossed 2-layer 3D A-net whose two layers form a B\"acklund pair.
Moreover, Weingarten cubes propagate also ``vertically'' in the following
sense.  Consider a 3D crisscrossed A-net $(x,\rho)$ that satisfies the
$C^1$-condition in coordinate planes, i.e., $\rho$ is of the form
\eqref{eq:general_solution_rho_weak_c1_3d} and satisfies
\eqref{eq:modified_bkp_for_rho}.  Then, any of the three families of
``parallel'' crisscrossed coordinate surfaces may be interpreted as a family
$f^{(i)} = (x^{(i)},\rho^{(i)})$ of pre-hyperbolic nets such that for all $i$,
$f^{(i)}$ and $f^{(i+1)}$ form a B\"acklund pair and, in addition,
$f^{(i)},f^{(i+2)}$ are related in a particular manner (in contrast to an
arbitrary family of B\"acklund transforms (c.f.
Remark~\ref{rem:twofold_baecklund_trafo})).  Now, if we assume that a single
elementary cube of $(x,\rho)$ constitutes a Weingarten cube then one of the
three families of B\"acklund transformations is privileged (cf.
Remark~\ref{rem:no_three_c1_loops}) and the proof of
Proposition~\ref{prop:relation_baecklund_weingarten_prehyp} (the transitivity
of the $C^1$-condition) reveals that, in fact, with respect to this
distinguished family, all B\"acklund pairs represent Weingarten pairs.  In this
connection, we observe that even if we do not make the assumption of an initial
Weingarten cube then a (weaker) Weingarten connection still exists. Thus, since
for any family $f^{(i)}$ of B\"acklund transforms as defined above the
$C^1$-condition maps the layer $f^{(i)}$ uniquely and independently of
$f^{(i+1)}$ to the layer $f^{(i+2)}$ and a double application of the Weingarten
transformation implies the vertical $C^1$ property, one may interpret
$f^{(i+2)}$ as being generated from $f^{(i)}$ by a double Weingarten
transformation.
\label{rem:weingarten_hierarchy}
\end{remark}

\paragraph{Algebraic description of Weingarten pairs in terms of potentials
$\tau$ that parametrize Moutard coefficients of the underlying A-net: A geometric
interpretation of solutions of the dBKP equation.}
\label{par:rho_gleich_tau} Consider a supporting 2-layer 3D A-net $x : \Z^2
\times \left\{ 0,1 \right\} \to \R^3$.  With respect to a potential $\tau$ for
Moutard coefficients of $x$,
relation~\eqref{eq:general_solution_rho_weak_c1_3d} is the general parametrization of
$\rho : \Z^2 \times \left\{ 0,1 \right\} \to \R^3$ such that $(x,\rho)$ is a
B\"acklund pair of pre-hyperbolic nets adapted to the supporting A-net $x$.
By virtue of Proposition~\ref{prop:relation_baecklund_weingarten_prehyp}, if,
for a given $\rho : \Z^2 \times \left\{ 0 \right\} \to \R$ which describes the
extension of $x(\cdot,0)$ to a pre-hyperbolic net, one chooses the parameters
$\gamma,\gamma^{ij},f^{(i)}$ in \eqref{eq:general_solution_rho_weak_c1_3d}
with $\gamma^{23},\gamma^{31}$ and $\gamma$ interpreted as functions of $z_3$ as
indicated in Remark \ref{rem:twofold_baecklund_trafo} such
that one initial cube of $(x,\rho)$ becomes a Weingarten cube then $(x,\rho)$
describes a Weingarten pair.  It turns out that this reduces
\eqref{eq:general_solution_rho_weak_c1_3d} to
$\rho = \tau$ modulo a reparametrization of $\tau$ which
corresponds to black-white rescaling of Lelieuvre normals $n$ for $x$, possibly
combined with a change of Moutard coefficients to be parametrized by $\tau$.
The following lemma encapsulates this
relation for a single Weingarten cube.

\begin{lemma}
Consider a crisscrossed A-cube given by $(x,\rho)$ as in
Fig.~\ref{fig:equi_twisted_cube}, left and let $n$ be Lelieuvre normals of $x$
with Moutard coefficients $a^{ij}$ chosen as in Fig.~\ref{fig:equi_twisted_cube}, middle.
The pair $(x,\rho)$ governs a Weingarten cube with respect to the top and bottom faces
if and only if there exists a $\lambda \in \R$ such that
\begin{equation}
a^{ij} = \lambda \frac{\rho_i \rho_j}{\rho \rho_{ij}}, \quad
a^{ij}_k = \lambda^{-1} \frac{\rho_{ik} \rho_{jk}}{\rho_k \rho_{ijk}}, \quad
(i,j,k) \in \left\{ (2,1,3),(2,3,1),(3,1,2) \right\}.
\label{eq:w_cube_m_coeffs_in_rho}
\end{equation}
In particular, modulo a suitable black-white rescaling of $n$, either 
the Moutard coefficients $(a^{21},a^{23},a^{31})$ or $(a^{12},a^{32},a^{13})$
and their respective shifts are parametrized by $\rho$.
\label{lem:w_cube_m_coeffs_in_rho}
\end{lemma}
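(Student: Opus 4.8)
The plan is to characterize the Weingarten cube by a system of four coplanarity conditions and then to recognize this system, modulo the A-net compatibility of the Moutard coefficients, as the one-parameter family \eqref{eq:w_cube_m_coeffs_in_rho}. Recall from the discussion preceding Theorem~\ref{thm:strong_c1_cube_consistency} that the Weingarten property \eqref{eq:weingarten_property_cross_vertices}, imposed on every pair $p_{ij},\tilde p_{ij}$ of corresponding cross vertices, is equivalent (via the $C^1$-identity, Lemma~\ref{lem:c1_identity}) to four coplanarities of the type $(p_{ij},\tilde x_i,\tilde x_j,\tilde p_{kl})$, each linking a bottom cross vertex to the diagonally opposite top cross vertex across a horizontal edge. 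By Lemma~\ref{lem:local_c1} each of these is exactly the $C^1$-condition for the edge-adjacent pair consisting of one vertical face of the cube and the top face, sharing that horizontal edge.

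The first step is therefore to apply Lemma~\ref{lem:local_c1_rho} to each of the four vertical-face/top-face pairs. This converts every coplanarity into an equality of the form ``($\rho$-monomial) $=$ (parallel invariant)'', where the parallel invariant is the product of the two face coefficients joined across the shared horizontal edge, taken in the representatives fixed by Fig.~\ref{fig:equi_twisted_cube} (with the sign-and-reciprocal conventions of Fig.~\ref{fig:moutard_coeff}, cf.\ Remark~\ref{rem:local_c1_rho_second_invariant}). The outcome is a system of four multiplicative relations tying the eight vertex values of $\rho$ to the six face coefficients $a^{21},a^{21}_3,a^{23},a^{23}_1,a^{31},a^{31}_2$.

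For the direction ``\eqref{eq:w_cube_m_coeffs_in_rho} $\Rightarrow$ Weingarten cube'' I would substitute the parametrized coefficients into the four invariants. A key preliminary observation is that the parametrized coefficients satisfy the A-net compatibility identically: the induced ratios are $a^{ij}/a^{ij}_k = \lambda^2\,\rho_i\rho_j\rho_k\rho_{ijk}/(\rho\,\rho_{ij}\rho_{ik}\rho_{jk})$, which is symmetric in $i,j,k$, so \eqref{eq:moutard_constant_ratio}, hence \eqref{eq:ste}, holds for any $\lambda$. Two of the four invariants pair a coefficient with a $k$-shifted one, so $\lambda$ cancels and the corresponding relations become monomial identities in $\rho$; the remaining two pair coefficients carrying equal powers of $\lambda$ and determine one and the same value of $\lambda^2$, which the assumed single $\lambda$ realizes. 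Hence all four coplanarities hold and the cube is of Weingarten type. For the converse one reads the four relations as equations for the coefficients: setting $\lambda := a^{21}\rho\rho_{21}/(\rho_1\rho_2)$ makes the first equation of \eqref{eq:w_cube_m_coeffs_in_rho} hold by definition, and the remaining coefficients are recovered from the other three relations together with \eqref{eq:ste}; the redundancy among the four coplanarities guaranteed by Theorem~\ref{thm:strong_c1_cube_consistency} is exactly what makes this recovery consistent and leaves $\lambda$ as the only free scalar. Finally, the ``in particular'' statement follows on comparing \eqref{eq:w_cube_m_coeffs_in_rho} with the $\tau$-parametrization \eqref{eq:moutard_param_by_tau}: a black-white rescaling \eqref{eq:bw_rescaling} of $n$ with $\alpha^2=\lambda$ multiplies the coefficients through one colour class by $\lambda^{\mp1}$ (Fig.~\ref{fig:moutard_rescaling}) and absorbs $\lambda$, after which $\rho$ is literally a potential $\tau$ for one of the two admissible families $(a^{21},a^{23},a^{31})$ or $(a^{12},a^{32},a^{13})$, the two choices corresponding to the two colourings.

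The step I expect to be the main obstacle is the bookkeeping in the first two steps: for each vertical-face/top-face pair one must pin down which representatives of the two Moutard coefficients form the parallel invariant, tracking the sign and reciprocal conventions of Fig.~\ref{fig:moutard_coeff} so that the four resulting relations line up with \eqref{eq:w_cube_m_coeffs_in_rho} rather than with reciprocal or shifted variants, and one must verify in the necessity direction that the four relations, modulo \eqref{eq:ste}, pin the six coefficients down to the single scalar $\lambda$. A clean way to sidestep part of this is to rerun the Lelieuvre-normal computation used in the proof of Lemma~\ref{lem:local_c1_rho} directly on the cube, writing the planes of \eqref{eq:weingarten_property_cross_vertices} through normals of the form $\mu n+\nu n'$ and reading off the conditions on $\rho$ and the $a^{ij}$ in one stroke; this makes the compatibility \eqref{eq:ste} enter transparently and confirms that $\lambda$ is the only remaining freedom.
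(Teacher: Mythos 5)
Your proposal is correct and follows essentially the same route as the paper: both convert the four Weingarten coplanarities via Lemma~\ref{lem:local_c1_rho} into four multiplicative relations between $\rho$ and the face coefficients, solve them up to the single scalar $\lambda$ fixed by $a^{21}$, recover the sixth coefficient from the compatibility \eqref{eq:moutard_constant_ratio}, and normalize $\lambda=\pm1$ by a black-white rescaling. The only (immaterial) difference is that the paper reads each coplanarity as a bottom-face/vertical-face $C^1$-condition rather than a vertical-face/top-face one, so its four relations \eqref{eq:weingarten_propagation_in_rho} all involve the unshifted bottom coefficient $a^{21}$.
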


\begin{figure}[htb]
\begin{center}
 \input{ 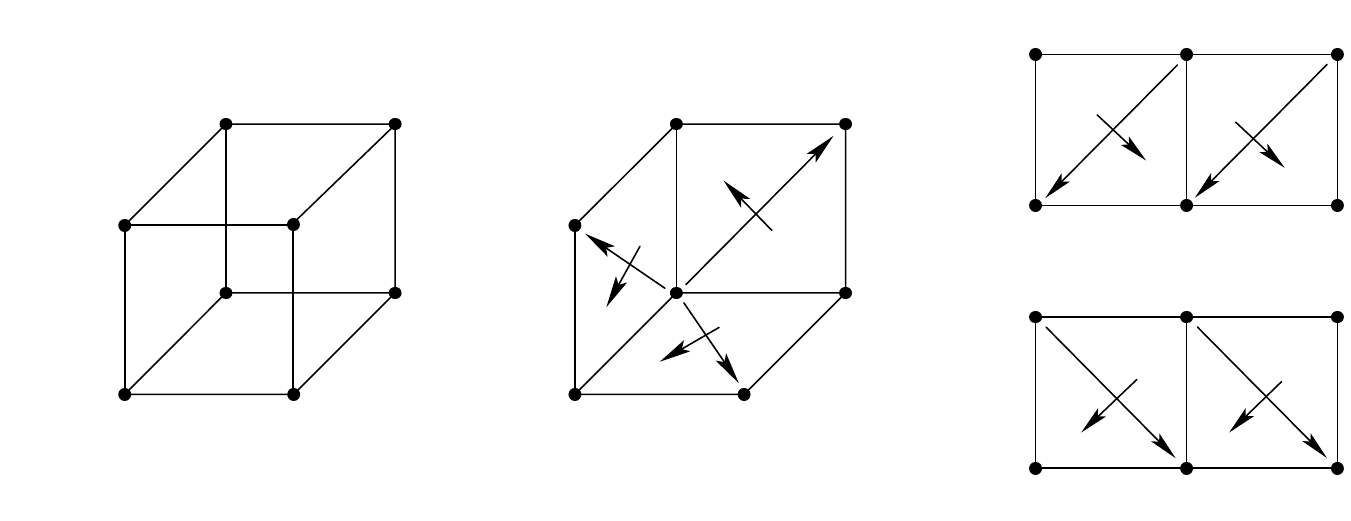_t } 
\end{center}
\caption{Algebraic data of a crisscrossed A-cube.}
\label{fig:equi_twisted_cube}
\end{figure}

\begin{proof}
We consider the two pairs of adjacent Moutard coefficients in
Fig.~\ref{fig:equi_twisted_cube}, right associated with horizontal edges of
direction 1 and the two corresponding pairs associated with direction
2.  According to Lemma~\ref{lem:local_c1_rho} / Remark~\ref{rem:local_c1_rho_second_invariant}, the Weingarten
propagation of the cross determined by $(\rho,\rho_1,\rho_{12},\rho_2)$ is
described by the conditions
\begin{equation}
\frac{a^{31}}{a^{21}} = \frac{\rho_3 \rho_{12}}{\rho_2 \rho_{13}}, \quad
a^{21}a^{31}_2 = \frac{\rho_1 \rho_{23}}{\rho \rho_{123}}, \quad
\frac{a^{23}}{a^{21}} = \frac{\rho_3 \rho_{12}}{\rho_1 \rho_{23}}, \quad
a^{21}a^{23}_1 = \frac{\rho_2 \rho_{13}}{\rho \rho_{123}}.
\label{eq:weingarten_propagation_in_rho}
\end{equation}
Now, we define $\lambda$ such that 
\begin{equation*}
a^{21} = \lambda \frac{\rho_1 \rho_2}{\rho \rho_{12}}
\end{equation*}
so that the relations  \eqref{eq:weingarten_propagation_in_rho} become
\begin{equation*}
a^{23} = \lambda \frac{\rho_2 \rho_3}{\rho \rho_{23}}, \quad
a^{31} = \lambda \frac{\rho_1 \rho_3}{\rho \rho_{13}}, \quad
a^{23}_1 = \lambda^{-1} \frac{\rho_{12} \rho_{13}}{\rho_1 \rho_{123}}, \quad
a^{31}_2 = \lambda^{-1} \frac{\rho_{12} \rho_{23}}{\rho_2 \rho_{123}}.
\end{equation*}
Furthemore, the relation \eqref{eq:moutard_constant_ratio}, i.e., equality of
the three ratios of opposite Moutard coefficients, yields
\begin{equation*}
a^{21}_3 = \frac{a^{21} a^{23}_1}{a^{23}} = \frac{a^{21} a^{31}_2}{a^{31}}
	= \lambda^{-1} \frac{\rho_{13} \rho_{23}}{\rho_3 \rho_{123}}.
\end{equation*}
Finally, we observe that a black-white rescaling \eqref{eq:bw_rescaling} of $n$ by $\alpha$
at even vertices and by $\alpha^{-1}$ at odd vertices amounts to a rescaling of
$a^{21},a^{23},a^{31}$ by $\alpha^2$ and a rescaling of
$a^{21}_3,a^{23}_1,a^{31}_2$ by $\alpha^{-2}$.  Therefore, $\lambda = \pm 1$
can always be achieved. After this normalization, either the Moutard
coefficients $(a^{21},a^{23},a^{31})$ or $(a^{12},a^{32},a^{13})$ and their
respective shifts are parametrized by $\rho$, depending on the sign of
$\lambda$.
\end{proof}

\begin{remark}
The proof of Lemma~\ref{lem:w_cube_m_coeffs_in_rho} demonstrates the
consistency of \eqref{eq:weingarten_propagation_in_rho} regarded as
evolution equations for given $\rho,\rho_1,\rho_2,\rho_3,\rho_{12}$ and represents 
an algebraic proof of Theorem~\ref{thm:strong_c1_cube_consistency}.
\label{rem:weingarten_propagation_consistency_algebraic}
\end{remark}

Application of Lemma~\ref{lem:w_cube_m_coeffs_in_rho} to a 2-layer lattice now yields

\begin{theorem}
A crisscrossed A-net $(x,\rho) : \Z^2 \times \left\{ 0,1 \right\} \to \R^3
\times \R$ encodes a Weingarten pair of pre-hyperbolic nets if and only if
there exists a Lelieuvre normal field $n$ of $x$ such that $\tau = \rho$
parametrizes in the sense of \eqref{eq:moutard_param_by_tau} either the Moutard coefficients $(a^{21},a^{23},a^{31})$ or $(a^{12},a^{32},a^{13})$.
\label{thm:rho_gleich_tau}
\end{theorem}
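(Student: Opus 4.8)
The plan is to read this theorem as the global counterpart of the single-cube characterization Lemma~\ref{lem:w_cube_m_coeffs_in_rho}. First I would record the reduction. By Definition~\ref{def:weingarten_trafo}, the pair $(x,\rho)$ encodes a Weingarten pair precisely when every elementary hexahedron with top and bottom faces in the coordinate planes $\Z^2\times\{0\}$ and $\Z^2\times\{1\}$ is a Weingarten cube: condition i) of Definition~\ref{def:weingarten_trafo} is automatic here, since the two $2$-dimensional layers of a $3$D A-net always form a discrete Weingarten pair (cf.\ the discussion following Definition~\ref{def:wtrafo_anets}). Thus the task is to promote the cube-wise algebraic condition \eqref{eq:w_cube_m_coeffs_in_rho} to a single global parametrization statement.

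The \textbf{``if''} direction is the easy one. If there is a Lelieuvre field $n$ for which $\tau=\rho$ parametrizes $(a^{21},a^{23},a^{31})$ in the sense of \eqref{eq:moutard_param_by_tau}, then the identities $a^{ij}=\rho_i\rho_j/(\rho\rho_{ij})$ hold at every base vertex, and evaluating them at the shifted base point $v+e_3$ produces exactly the shifted relations of \eqref{eq:w_cube_m_coeffs_in_rho} with $\lambda=1$. Hence each cube satisfies \eqref{eq:w_cube_m_coeffs_in_rho} with $\lambda\equiv 1$ and is a Weingarten cube by Lemma~\ref{lem:w_cube_m_coeffs_in_rho}. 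The case where $\rho$ parametrizes $(a^{12},a^{32},a^{13})$ is identical after using $a^{ij}=-a^{ji}$, which merely amounts to taking $\lambda\equiv -1$.

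For the \textbf{``only if''} direction I would fix an arbitrary Lelieuvre field $n$ with Moutard coefficients labelled as in Lemma~\ref{lem:w_cube_m_coeffs_in_rho} and attach to each cube, based at $v$, the scalar $\lambda_v$ furnished by that lemma. The crucial step is to show that these cube-wise scalars are coupled across shared faces. Two cubes based at $v$ and $v+e_1$ share the vertical $(2,3)$-face carrying the coefficient $a^{23}_1(v)=a^{23}(v+e_1)$; expressing this coefficient once by the shifted relation for the cube at $v$ and once by the unshifted relation for the cube at $v+e_1$ forces $\lambda_{v+e_1}=\lambda_v^{-1}$, and the analogous matching on the shared $(3,1)$-face in direction $2$ gives $\lambda_{v+e_2}=\lambda_v^{-1}$. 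Consequently $\lambda_v$ depends only on the parity of $z_1+z_2$, equalling some $\lambda_0$ on even and $\lambda_0^{-1}$ on odd vertices.

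It then remains to normalize globally. A single black-white rescaling \eqref{eq:bw_rescaling} by $\alpha$ multiplies $\lambda_v$ by $\alpha^{2}$ on black and by $\alpha^{-2}$ on white vertices, so it respects the reciprocal parity structure; choosing $\alpha^2=|\lambda_0|^{-1}$ then yields $\lambda_v\equiv\operatorname{sign}(\lambda_0)=\pm 1$ for all cubes simultaneously. For $\lambda\equiv 1$ the relations \eqref{eq:w_cube_m_coeffs_in_rho} state exactly that $\tau=\rho$ parametrizes $(a^{21},a^{23},a^{31})$, and for $\lambda\equiv -1$, using $a^{ij}=-a^{ji}$, that it parametrizes $(a^{12},a^{32},a^{13})$, which is the assertion. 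The main obstacle I anticipate is precisely this globalization: a priori the scalars $\lambda_v$ could carry a nontrivial holonomy that obstructs a single consistent rescaling. The point that removes it is the identity $\lambda_{v+e_1+e_2}=\lambda_v$ (no monodromy around a horizontal face), together with the observation that, since the lattice comprises only the two layers $z_3\in\{0,1\}$, there is no shift in direction $3$ between distinct cubes and hence no further compatibility condition to satisfy.
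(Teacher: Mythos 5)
Your proposal is correct, and both it and the paper pivot on Lemma~\ref{lem:w_cube_m_coeffs_in_rho}, but your organization of the ``only if'' direction is genuinely different. The paper normalizes the Lelieuvre field so that $\rho_1\rho_2/(\rho\rho_{12})=\varepsilon a^{21}$, $\varepsilon=\pm1$, holds at a single initial quadrilateral, then defines a potential $\tau$ for $\varepsilon(a^{21},a^{23},a^{31})$ by Cauchy data chosen to agree with $\rho$, and propagates the identity $\tau=\rho$ cube by cube using the $C^1$- and Weingarten relations; the globalization is absorbed into the phrase ``iterative application of this argument''. You instead make the potential obstruction explicit: you attach to each cube the scalar $\lambda_v$ furnished by Lemma~\ref{lem:w_cube_m_coeffs_in_rho}, derive the coupling $\lambda_{v+e_i}=\lambda_v^{-1}$ ($i=1,2$) by matching the coefficient on the shared vertical face (e.g.\ $a^{23}_1(v)=a^{23}(v+e_1)$), conclude $\lambda_v=\lambda_0^{(-1)^{z_1+z_2}}$ with no monodromy, and then remove this parity-alternating factor by one global black-white rescaling, which indeed multiplies $\lambda_v$ by $\alpha^{\pm2}$ with exactly the right parity. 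This buys a transparent ``no holonomy'' statement and pinpoints where the dichotomy $\varepsilon=\operatorname{sign}(\lambda_0)$ between $(a^{21},a^{23},a^{31})$ and $(a^{12},a^{32},a^{13})$ originates, at the cost of the explicit face-matching computation; the paper's Cauchy-problem formulation is shorter but hides the same bookkeeping. Two cosmetic points: the three shifted relations in \eqref{eq:w_cube_m_coeffs_in_rho} arise from evaluating the unshifted ones at $v+e_3$, $v+e_1$ and $v+e_2$ respectively, not all at $v+e_3$; and in the ``if'' direction one should also record that the parametrization implies the horizontal $C^1$-condition $a^{12}a^{12}_1=\rho_2\rho_{11}/(\rho\rho_{112})$, so that each layer is indeed a pre-hyperbolic net --- a one-line verification that the paper likewise leaves implicit.
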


\begin{proof}
If $\tau = \rho$ is a potential for Moutard coefficients then, according to
Lemma~\ref{lem:w_cube_m_coeffs_in_rho}, every elementary cube constitutes a Weingarten cube,
i.e., $(x,\rho)$ describes a Weingarten pair.
Conversely, we observe that it is possible to achieve
\begin{equation}
\frac{\rho_1 \rho_2}{\rho \rho_{12}} = \varepsilon a^{21}, \quad \varepsilon = \pm 1
\label{eq:rho_tau_pm}
\end{equation}
for one initial quadrilateral by applying a suitable black-white rescaling of Lelieuvre normals $n$ of $x$.
With respect to this normalization,
\begin{equation*}
\tau(\mathcal{S}^1) = \rho(\mathcal{S}^1), \quad
\tau(\mathcal{S}^2) = \rho(\mathcal{S}^2), \quad
\tau(1,1,0) = \rho(1,1,0), \quad \tau(0,0,1) = \rho(0,0,1)
\end{equation*}
are Cauchy data for a potential $\tau$ that parametrizes the coefficients
$\varepsilon(a^{21},a^{23},a^{31})$, whereby $\varepsilon = -1$ corresponds to
a parametrization of $(a^{12},a^{32},a^{13})$.  With respect to this unique
potential $\tau$, the $C^1$-condition \eqref{eq:weak_c1_in_tau} in the
coordinate plane $\Z^2 \times \left\{ 0 \right\}$ reduces to
$\tau(\mathcal{S}^{12}) = \rho(\mathcal{S}^{12})$, which is satisfied
by assumption. Now, since $\tau(0,0,1) = \rho(0,0,1)$, the Weingarten conditions
\eqref{eq:weingarten_propagation_in_rho} imply that $\tau = \rho$ on the entire
cube containing the initial quadrilateral. Iterative application of this
argument shows that $\tau = \rho$ everywhere.
\end{proof}

\begin{remark}
If we iterate the Weingarten transformation of a pre-hyperbolic net adapted to
one $(1,2)$-layer of a 3D A-net $x:\Z^3 \to \R^3$ then we obtain a crisscrossed A-net
$(x,\rho)$ and the above theorem implies that $\tau=\rho$ parametrizes either Moutard
coefficients $(a^{21},a^{23},a^{31})$ or $(a^{12},a^{32},a^{13})$
of a distinguished Lelieuvre representation $n$ of $x$.
Therefore, $\tau$ satisfies the discrete BKP equation in the form
\begin{equation}
\tau \tau_{123} - \tau_1 \tau_{23} - \tau_2 \tau_{13} + \tau_3 \tau_{12} = 0
\label{eq:bkp_Weingarten_12}
\end{equation}
which characterizes potentials $\tau$ for solutions
$\varepsilon(a^{21},a^{23},a^{31}), \varepsilon = \pm 1$ of \eqref{eq:ste} and constitutes
the analogue of equation \eqref{eq:bkp_lexicographic} characterizing
potentials for lexicographically ordered coefficients.
Conversely, if $\tau$ is a solution of \eqref{eq:bkp_Weingarten_12} then the
discrete Moutard equations \eqref{eq:moutard_minus_mD} together with the
discrete Lelieuvre formulae \eqref{eq:lelieuvre_mD} give rise to a class of 
corresponding A-nets $x$ with Lelieuvre normals for which either the Moutard coefficients
$(a^{21},a^{23},a^{31})$ or $(a^{12},a^{32},a^{13})$ are parametrized by
$\tau$ and $(x,\tau)$ completely encodes a family of Weingarten pairs of pre-hyperbolic 
nets with respect to the distinguished $(1,2)$-coordinate planes. In this manner, a novel
geometric intepretation of the discrete BKP equation \eqref{eq:bkp_Weingarten_12} is
uncovered in that the solution of the latter {\em directly} parametrizes the hyperboloids (patches) adapted 
to a corresponding A-net. In general, solutions of discrete BKP
equations of the type \eqref{eq:bkp_lexicographic} with two plus and two minus
signs on a 3-dimensional lattice correspond to families of Weingarten
transforms with respect to the associated distinguished $(i,k)$-coordinate planes.
\label{rem:bkp_geometric}
\end{remark}

\subsection{Equi-twisted 3D A-nets and Weingarten transformations of hyperbolic nets}

A pre-hyperbolic net $(x,\rho)$ is a hyperbolic net if all crosses are internal
and therefore describe hyperboloid patches. This is the case if and only if
$\rho$ is strictly positive or strictly negative.  Assuming that an A-surface
$x$ is simply connected, it is possible to extend $x$ to a hyperbolic net if
and only if $x$ is equi-twisted, i.e., if and only if all parallel invariants
of $x$ are positive (cf.
Proposition~\ref{prop:hypnet_extension_positive_invariants}).

The key aspect in the determination of Weingarten transformations of
hyperbolic nets is the analysis of the equi-twist properties of
multidimensional A-nets. In Theorem~\ref{thm:weingarten_extension_is_consistent}, 
it is justified to replace the term ``pre-hyperbolic net'' by ``hyperbolic net''
if we can ensure that all 3-dimensional A-cubes have the property that
the Weingarten propagation of crosses preserves internal crosses.
We begin with the discussion of cubes that have this property.

\begin{definition}[Equi-twisted A-cubes]
We call an A-cube \emph{equi-twisted with respect to a pair $(Q,\tilde Q)$ of
opposite faces} if both loops of edge-adjacent quadrilaterals containing $Q$
and $\tilde Q$ are equi-twisted.
\label{def:et_acube}
\end{definition}

\begin{lemma}
Let $\mathcal{C}$ be an A-cube with an internal cross attached to one face $Q$.
The Weingarten propagation of this cross to the opposite face $\tilde Q$
yields an internal cross adapted to $\tilde Q$ if and only if
$\mathcal{C}$ is equi-twisted with respect to $(Q,\tilde Q)$.
\label{lem:equi_twisted_a_cube}
\end{lemma}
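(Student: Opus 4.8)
The plan is to reduce this geometric statement to a sign analysis of the scalars $\rho$, using that an internal cross is characterized by its defining scalars $\rho_i$ all having the same sign (Lemma~\ref{lem:cross_as_rhos}). I would adopt the labelling of Figure~\ref{fig:equi_twisted_cube}, in which the bottom face $Q$ carries the cross $(\rho,\rho_1,\rho_2,\rho_{12})$ and the top face $\tilde Q$ carries $(\rho_3,\rho_{13},\rho_{23},\rho_{123})$. Since the top cross arises by Weingarten propagation, the whole configuration is a Weingarten cube, so Lemma~\ref{lem:w_cube_m_coeffs_in_rho} applies and the Moutard coefficients of $\mathcal{C}$ are given by \eqref{eq:w_cube_m_coeffs_in_rho} for some $\lambda\in\R$. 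Normalising the internal bottom cross so that $\rho,\rho_1,\rho_2,\rho_{12}>0$, the top cross is internal if and only if the ratios $\rho_{13}/\rho_3,\ \rho_{23}/\rho_3,\ \rho_{123}/\rho_3$ are all positive.

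I would then read off these signs directly from \eqref{eq:w_cube_m_coeffs_in_rho}. Because $\rho,\rho_1,\rho_2,\rho_{12}>0$, the relation $a^{21}=\lambda\,\rho_1\rho_2/(\rho\rho_{12})$ forces $\operatorname{sign}(a^{21})=\operatorname{sign}(\lambda)$, and the remaining relations give
$$\operatorname{sign}(\rho_{13}/\rho_3)=\operatorname{sign}(a^{21}a^{31}),\qquad \operatorname{sign}(\rho_{23}/\rho_3)=\operatorname{sign}(a^{21}a^{23}),\qquad \operatorname{sign}(\rho_{123}/\rho_{23})=\operatorname{sign}(a^{21}a^{31}_2),$$
together with the analogous identity $\operatorname{sign}(\rho_{123}/\rho_{13})=\operatorname{sign}(a^{21}a^{23}_1)$. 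Hence the propagated top cross is internal exactly when the four products $a^{21}a^{31}$, $a^{21}a^{23}$, $a^{21}a^{31}_2$, $a^{21}a^{23}_1$ are all positive.

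The second and decisive step is to identify these four products with the parallel invariants whose positivity defines equi-twist of the two loops in Definition~\ref{def:et_acube}. Using the skew-symmetry $a^{ji}=-a^{ij}$, each product equals one of $a^{12}a^{13},\,a^{12}a^{23},\,a^{12}a^{13}_2,\,a^{12}a^{23}_1$, and I would verify --- by transcribing the pair consisting of $Q$ and one vertical face into the configuration of Lemma~\ref{lem:local_c1_rho} --- that these are precisely the parallel invariants of the four pairs formed by $Q$ with its four neighbouring vertical faces, two of which belong to each loop. Positivity of such an invariant is equi-twist of the corresponding pair (Lemma~\ref{lem:propagation_of_internal_vertices}), so the four conditions say exactly that $Q$ is equi-twisted with all four vertical faces. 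To complete Definition~\ref{def:et_acube} I must also control the four adjacencies of $\tilde Q$; here the point is that once the top cross is internal, \eqref{eq:w_cube_m_coeffs_in_rho} yields $\operatorname{sign}(a^{21}_3)=\operatorname{sign}(a^{21})$, so every $\tilde Q$-adjacency invariant inherits the sign of the matching $Q$-adjacency invariant and is likewise positive. Running this chain of equivalences forwards gives ``equi-twisted $\Rightarrow$ internal top cross'', and backwards gives the converse. The hard part is this matching itself: verifying that the relevant invariant for two faces lying in \emph{different} coordinate planes is $a^{12}a^{13}$ (equivalently the ratio $a^{13}/a^{12}$), and confirming that the $\tilde Q$-adjacencies impose no independent condition. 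A purely geometric alternative would realise the Weingarten propagation as successive single-projection $C^1$-propagations of cross vertices along the vertical faces (Remark~\ref{rem:local_c1_projection}) and invoke Lemma~\ref{lem:propagation_of_internal_vertices} loop by loop; I expect the same bookkeeping of which pairs must be equi-twisted to be the crux in that route as well.
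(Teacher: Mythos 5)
Your argument is correct in substance, but it takes a genuinely different route from the paper's. The paper proves this lemma in three lines of pure incidence geometry: by Remark~\ref{rem:c1_identity_projection} each diagonal propagation $p_{ij}\mapsto\tilde p_{kl}$ of the Weingarten propagation \emph{is} a single $C^1$-propagation of a cross vertex across a pair of edge-adjacent faces (realisable through either of two such pairs, by the $C^1$-identity), so Lemma~\ref{lem:propagation_of_internal_vertices} says internality is preserved exactly when that pair is equi-twisted; the two realisations pair up the eight equi-twist conditions of Definition~\ref{def:et_acube} into four, one per cross vertex, and the claim follows. You instead run everything through the algebraic description: Lemma~\ref{lem:w_cube_m_coeffs_in_rho} and the sign bookkeeping of the $\rho$'s. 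That is legitimate (that lemma precedes this one, so there is no circularity), and in fact it is essentially the computation the paper itself performs immediately \emph{after} the lemma to arrive at the positivity conditions \eqref{eq:et_wcube_condition} via \eqref{eq:weingarten_propagation_in_rho}; your displayed sign identities are all correct, and your observation that internality of the top cross forces $\operatorname{sign}(a^{21}_3)=\operatorname{sign}(a^{21})$ does dispose of the four $\tilde Q$-adjacencies. What the geometric proof buys is brevity and independence from the $\rho$-calculus; what your version buys is that it simultaneously derives the explicit criterion \eqref{eq:et_wcube_condition}, which the paper needs anyway for the subsequent existence argument.

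Two caveats. First, the step you yourself flag as ``the hard part'' --- that $a^{21}a^{31}$, $a^{21}a^{23}$, $a^{21}a^{31}_2$, $a^{21}a^{23}_1$ have the signs of the parallel invariants of the four pairs (bottom face, vertical face) --- is genuinely the crux and must be carried out by transcribing each pair into the configuration of Lemma~\ref{lem:local_c1_rho}; it works (e.g.\ the invariant for the pair sharing the edge $[x,x_1]$ comes out as $a^{31}/a^{21}$, which has the sign of $a^{21}a^{31}$), but until it is written down the proof is a plan, not a proof. Second, your parenthetical rewriting via skew-symmetry contains sign slips: $a^{21}a^{23}=(-a^{12})(-a^{32})=a^{12}a^{32}=-a^{12}a^{23}$, and likewise $a^{21}a^{23}_1=-a^{12}a^{23}_1$. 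Since positivity of these invariants is precisely what encodes equi-twist, such signs cannot be treated casually; keep the products in the form $a^{21}a^{31}$ etc.\ (or as the ratios $a^{31}/a^{21}$ etc.\ of \eqref{eq:et_wcube_condition}) throughout.
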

\begin{proof}
Propagation of cross vertices according to the $C^1$-condition maps an internal
vertex to an internal vertex if and only if the corresponding two edge-adjacent
quadrilaterals are equi-twisted
(Lemma~\ref{lem:propagation_of_internal_vertices}). This implies that two
edge-adjacent quadrilaterals of an A-cube are equi-twisted if and only if the
opposite two quadrilaterals are equi-twisted (cf.
Remark~\ref{rem:c1_identity_projection}) and the claim of the lemma follows 
(cf.  Fig.~\ref{fig:diagonal_cross_vertex_propagation}).
\end{proof}

\begin{figure}[htb]
\begin{center}
 \input{ 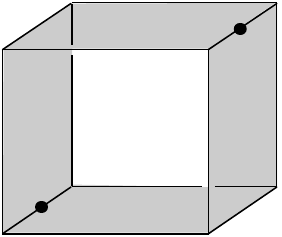_t } 
\end{center}
\caption{An internal cross vertex $p$ is propagated to an internal cross vertex
$\tilde p$ if and only if $(Q,Q_r)$ are equi-twisted, which is equivalent to
$(Q_l,\tilde Q)$ being equi-twisted.  If both, $(Q,Q_l)$ and $(Q,Q_r)$ are
equi-twisted then the complete loop $(Q,Q_r,\tilde Q,Q_l)$ is equi-twisted.  }
\label{fig:diagonal_cross_vertex_propagation}
\end{figure}

Equi-twisted A-cubes exist in the following sense. We recall the algebraic description
\eqref{eq:weingarten_propagation_in_rho} of the Weingarten propagation
in the context of Fig.~\ref{fig:equi_twisted_cube} and observe, in analogy with
Lemma~\ref{lem:propagation_of_internal_vertices}, that an internal cross
adapted to the bottom quadrilateral of the cube in
Fig.~\ref{fig:equi_twisted_cube}, left is propagated to an internal cross adapted
to the top quadrilateral if and only if the four relevant parallel invariants are
positive, i.e.,
\begin{equation}
\frac{a^{31}}{a^{21}} > 0, \quad a^{21}a^{31}_2 > 0,
\quad \frac{a^{23}}{a^{21}} > 0, \quad a^{21}a^{23}_1 > 0.
\label{eq:et_wcube_condition}
\end{equation}
This means that the cube in question is equi-twisted with respect to the top and bottom quadrilaterals
if and only if \eqref{eq:et_wcube_condition} is satisfied and, hence, 
we have to demonstrate that it is possible to achieve \eqref{eq:et_wcube_condition}
for a single A-cube. In terms of arbitrarily chosen Moutard
coefficients $a^{21},a^{23},a^{31}$, the propagation \eqref{eq:ste}
of those coefficients on an A-cube reads
\begin{equation}
a^{ij}_k = \frac{a^{ij}}{a^{21} (a^{23} + a^{31}) - a^{23}a^{31}}.
\label{eq:ste_123}
\end{equation}
Now, it is possible to choose positive coefficients $a^{21},a^{23},a^{31}$ in such
a manner that the denominator in \eqref{eq:ste_123} is positive so that a
solution of \eqref{eq:et_wcube_condition} is obtained.
It is noted that the coefficient $a^{21}_3$ is then also positive, which corresponds to
the symmetry of equi-twisted A-cubes with respect to the two distinguished opposite
faces. Fig.~\ref{fig:et_acube_example} displays an example of an equi-twisted Weingarten cube
and the associated hyperboloid patches.

\begin{figure}[htb]
\begin{center}
\parbox{.28\textwidth}{\includegraphics[width=\linewidth]{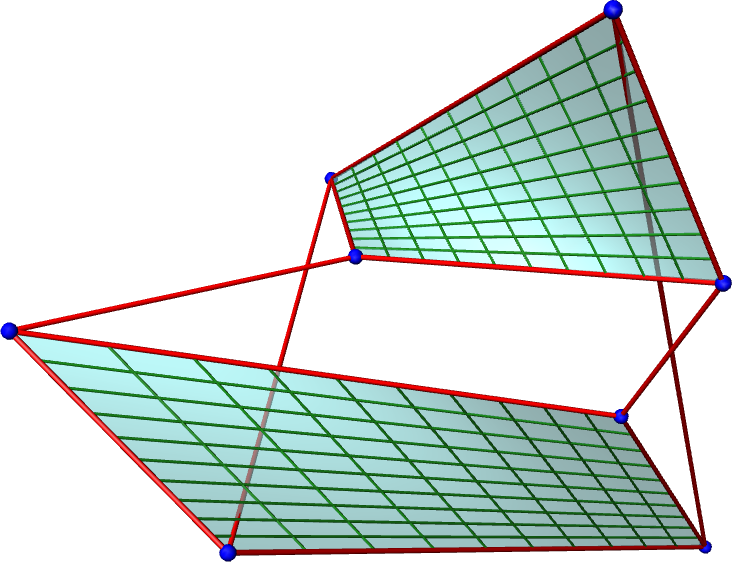}}
\hspace{.06\linewidth}
\parbox{.25\textwidth}{\includegraphics[width=\linewidth]{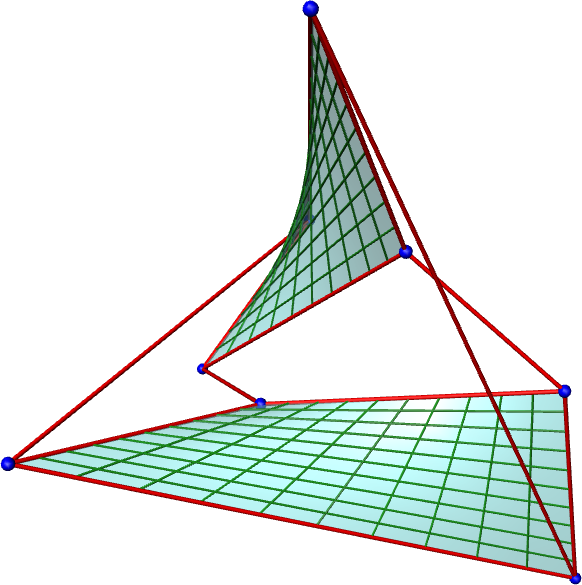}}
\hspace{.06\linewidth}
\parbox{.29\textwidth}{\includegraphics[width=\linewidth]{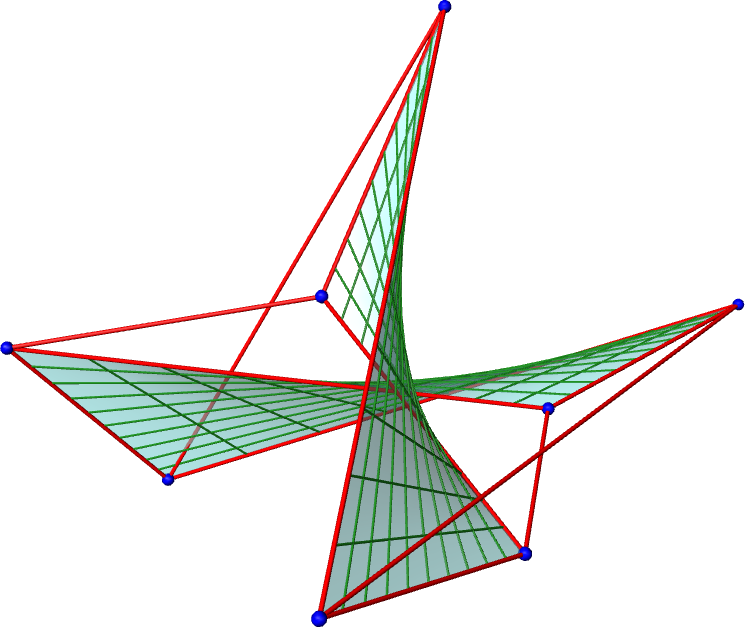}}
\end{center}
\begin{center}
\parbox{.28\textwidth}{\includegraphics[width=\linewidth]{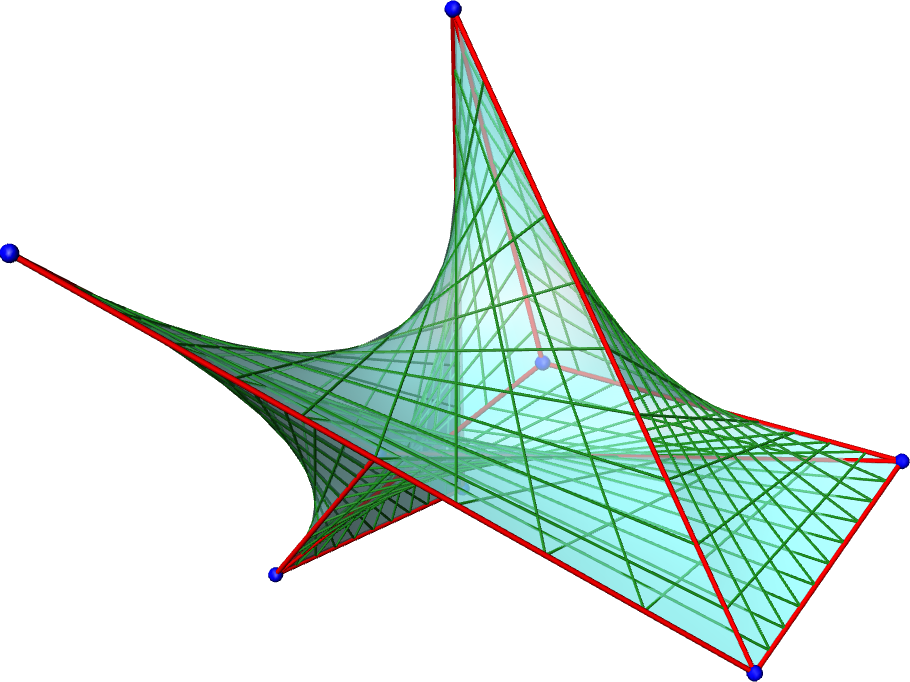}}
\hspace{.05\linewidth}
\parbox{.28\textwidth}{\includegraphics[width=\linewidth]{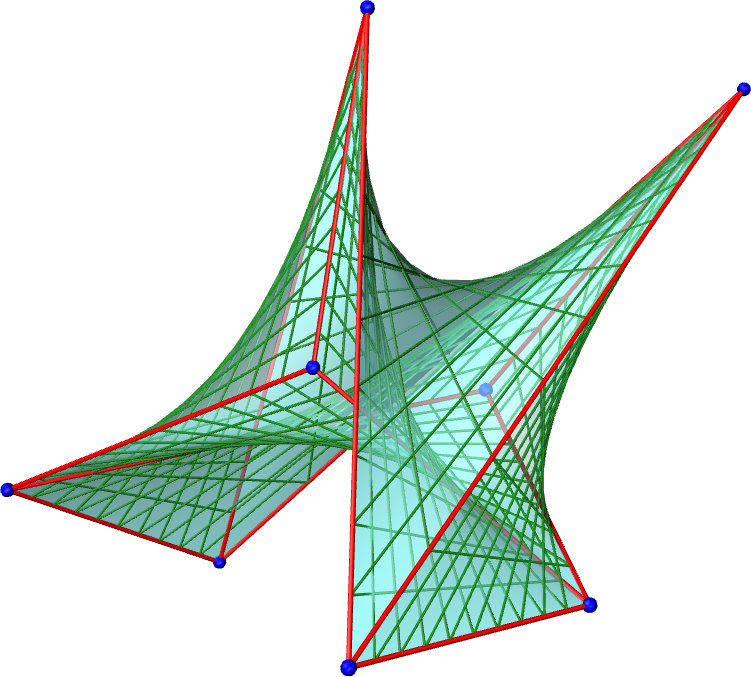}}
\end{center}
\caption{Top: An equi-twisted A-cube with a Weingarten pair of hyperboloid patches
adapted to opposite faces.
Bottom: The same cube completed with the unique patches that are adapted to vertical faces
and determined by the distinguished Weingarten pair via the $C^1$-condition.}
\label{fig:et_acube_example}
\end{figure}

\begin{remark}
The previous considerations show that it is impossible for all three loops of
an A-cube to be equi-twisted. Indeed, the equi-twist conditions associated with
the additional "horizontal" loop of quadrilaterals in
Fig.~\ref{fig:equi_twisted_cube} may be expressed as $a^{31}/a^{23} < 0$ and
$a^{31}a^{23}_1 < 0$, which cannot be satisfied simultaneously with
\eqref{eq:et_wcube_condition}.  An analogous argument may be used to show that
all interior vertices of an equi-twisted A-surface are of even degree.
\label{rem:no_three_loops_equi_twisted}
\end{remark}

Based on Definition~\ref{def:et_acube}, we say that an A-net $x : \Z^3 \to
\R^3$ is \emph{equi-twisted with respect to consecutive layers
$x^{(k)}=x(\cdot,k)$ and $x^{(k+1)}=x(\cdot,k+1)$} if the elementary hexahedra "between"
the restrictions of $x$ to $x^{(k)}$ and $x^{(k+1)}$ are equi-twisted with
respect to corresponding opposite quadrilaterals of $x^{(k)}$ and $x^{(k+1)}$.
As a consequence, each of the layers $x^{(k)},x^{(k+1)}$ is then equi-twisted
itself (see Remark~\ref{rem:equi_twist_transitive}).  Thus, by virtue of 
Lemma~\ref{lem:equi_twisted_a_cube}, the Weingarten propagation of (internal
crosses of) a hyperbolic net adapted to $x^{(k)}$ generates a Weingarten transform
adapted to $x^{(k+1)}$ if and only if $x$ is equi-twisted with respect to
$x^{(k)},x^{(k+1)}$.
The previous considerations, in particular, the algebraic equi-twist condition
\eqref{eq:et_wcube_condition}, combined with Theorem~\ref{thm:rho_gleich_tau} and
Remark~\ref{rem:bkp_geometric} lead to the following description of equi-twisted
A-nets and Weingarten transformations of adapted hyperbolic nets.

\begin{theorem}
\begin{enumerate}[i)]
\item 
A-nets $x : \Z^3 \to \R^3$ that are equi-twisted with respect to any two
consecutive $(1,2)$-layers are characterized by the property that, for any
Lelieuvre representation, either the Moutard coefficients
$(a^{21},a^{23},a^{31})$ or $(a^{12},a^{32},a^{13})$ may be parametrized by
positive solutions $\tau$ of the discrete BKP equation
\eqref{eq:bkp_Weingarten_12}.
\item
Hyperbolic nets that are adapted to the (1,2)-layers of an A-net $x:\Z^3 \to
\R^3$ and represented as $(x,\rho) : \Z^3 \to \R^3 \times \R$ constitute a family of
Weingarten transforms if and only if there exists a Lelieuvre normal field $n$
of $x$ such that $\tau = \rho$ parametrizes  according to
\eqref{eq:moutard_param_by_tau} either the Moutard coefficients
$(a^{21},a^{23},a^{31})$ or $(a^{12},a^{32},a^{13})$ of $n$.
\item
In particular, if $\tau$ is a positve solution of \eqref{eq:bkp_Weingarten_12} and $x$ is an
A-net with Moutard coefficients $(a^{21},a^{23},a^{31})$ or
$(a^{12},a^{32},a^{13})$ parametrized by $\tau$ then
$(x,\tau)$ encapsulates a family of Weingarten pairs of hyperbolic nets with
respect to the $(1,2)$-coordinate planes.
\end{enumerate}
\label{thm:bkp_geometric_positive}
\end{theorem}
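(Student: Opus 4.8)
The plan is to deduce all three parts from the two-layer result Theorem~\ref{thm:rho_gleich_tau} together with the algebraic equi-twist condition \eqref{eq:et_wcube_condition}, treating (ii) first, then (i), and finally assembling (iii). For part (ii), I would apply Theorem~\ref{thm:rho_gleich_tau} to each pair of consecutive $(1,2)$-layers of $(x,\rho)$. Since $(x,\rho)$ is given as a family of hyperbolic nets, every layer carries internal crosses, so the only content is the Weingarten relation between layers; and by the observation following Corollary~\ref{cor:w_cubes_existence}, as both opposite faces of each cube then carry internal crosses, the pre-hyperbolic statement of Theorem~\ref{thm:rho_gleich_tau} upgrades to the hyperbolic setting. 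Applying the theorem to every consecutive pair yields, after fixing one global black-white normalization of a Lelieuvre field $n$, a single $\tau = \rho$ parametrizing either $(a^{21},a^{23},a^{31})$ or $(a^{12},a^{32},a^{13})$ in the sense of \eqref{eq:moutard_param_by_tau}; the consistency of this parametrization across all cubes is exactly the per-cube statement of Lemma~\ref{lem:w_cube_m_coeffs_in_rho} with $\lambda = \pm 1$.

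For part (i), the key observation I would exploit is that \eqref{eq:et_wcube_condition} is precisely the condition that positivity of $\tau$ propagates through a cube under the parametrization \eqref{eq:moutard_param_by_tau}. In the forward direction, assume $x$ is equi-twisted with respect to any two consecutive $(1,2)$-layers, so \eqref{eq:et_wcube_condition} holds on every cube. The relations $a^{31}/a^{21} > 0$ and $a^{23}/a^{21} > 0$ force $a^{21},a^{23},a^{31}$ to share a common sign, which is globally constant since adjacent cubes share coefficients; choosing the triple with positive sign, namely $(a^{21},a^{23},a^{31})$ or, after passing to $a^{ij}\to -a^{ij}=a^{ji}$, the triple $(a^{12},a^{32},a^{13})$, gives positive Moutard coefficients. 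A potential $\tau$ then exists by the constant-ratio property \eqref{eq:moutard_constant_ratio} valid for A-nets, and I would verify positivity inductively: the recursions $\tau_{ij} = \tau_i\tau_j/(\tau\, a^{ij})$ preserve positivity since the coefficients are positive, while the remaining inequalities $a^{21}a^{31}_2 > 0$ and $a^{21}a^{23}_1 > 0$ are exactly what guarantees $\tau_{123} = \tau_{12}\tau_{13}/(\tau_1\, a^{23}_1) > 0$, so that a globally positive $\tau$ results from positive Cauchy data. By Remark~\ref{rem:bkp_geometric}, this $\tau$ satisfies the discrete BKP equation \eqref{eq:bkp_Weingarten_12}. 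The converse is a direct computation: if a positive $\tau$ parametrizes $(a^{21},a^{23},a^{31})$, then these coefficients and their relevant shifts $a^{23}_1,a^{31}_2$ are manifestly positive, so \eqref{eq:et_wcube_condition} holds on every cube and equi-twist follows (the case of $(a^{12},a^{32},a^{13})$ being symmetric).

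Part (iii) then follows by specialization: given a positive BKP solution $\tau$ parametrizing the Moutard coefficients of $x$, part (i) shows that $x$ is equi-twisted with respect to consecutive $(1,2)$-layers, and setting $\rho = \tau > 0$ produces internal crosses, hence genuine hyperbolic nets, on each layer (Proposition~\ref{prop:hypnet_extension_positive_invariants}). Since $\tau = \rho$ parametrizes the coefficients, part (ii) applies and identifies these hyperbolic nets as a family of Weingarten transforms, so $(x,\tau)$ encapsulates a family of Weingarten pairs with respect to the $(1,2)$-coordinate planes.

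I expect the main obstacle to be the global bookkeeping in part (i): showing that the per-cube sign of the Moutard triple is globally constant and that the local positivity-propagation afforded by \eqref{eq:et_wcube_condition} assembles into a single globally positive $\tau$ without obstruction. This is where the interplay between the local equi-twist inequalities, the global existence of the potential via \eqref{eq:moutard_constant_ratio}, and the freedom of black-white rescaling must be handled carefully; once the correspondence between \eqref{eq:et_wcube_condition} and the positivity of the recursion for $\tau$ is made precise, the remainder reduces to the already-established two-layer theory.
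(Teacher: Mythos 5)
Your proposal is correct and takes essentially the same route as the paper, which derives the theorem precisely by combining the algebraic equi-twist condition \eqref{eq:et_wcube_condition} with Theorem~\ref{thm:rho_gleich_tau} and Remark~\ref{rem:bkp_geometric}. The sign- and positivity-propagation details you supply for part (i) are exactly the content the paper leaves implicit in the considerations preceding the theorem (Lemma~\ref{lem:equi_twisted_a_cube} and the discussion of \eqref{eq:et_wcube_condition}).
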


\begin{remark}
The trivial solution $\tau \equiv 1$ of \eqref{eq:bkp_Weingarten_12}
corresponds to Weingarten transformations of the affine minimal surfaces
analysed in
\cite{Craizer:2010:AffineMinimalSurfaces,KaeferboeckPottmann:2012:DiscreteAffineMinimal}
since $\tau_1 \tau_2 / \tau \tau_{12} = \rho_1 \rho_2 / \rho
\rho_{12} = 1$ corresponds to hyperbolic paraboloid patches adapted to
(1,2)-quadrilaterals (cf.
Remark~\ref{rem:hyperbolic_paraboloids_shape_parameter}).
\end{remark}

We conclude with the following statement about general Weingarten pairs of hyperbolic nets.

\begin{proposition}
The class of Weingarten pairs of hyperbolic nets may be parametrized in terms of 
one-dimensional Cauchy data and a function of two variables 
(encoding one of the two hyperbolic A-nets) which is locally bounded below.
\label{prop:existence_positive}
\end{proposition}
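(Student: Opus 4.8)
The plan is to recast the problem, via the correspondence established in Theorems~\ref{thm:rho_gleich_tau} and \ref{thm:bkp_geometric_positive}, as the parametrization of the strictly positive solutions of the discrete BKP equation \eqref{eq:bkp_Weingarten_12} on the two-layer lattice $\Z^2\times\{0,1\}$, and then to exploit the linearity of that equation in the vertical direction. By Theorem~\ref{thm:rho_gleich_tau} a crisscrossed A-net $(x,\rho):\Z^2\times\{0,1\}\to\R^3\times\R$ encodes a Weingarten pair exactly when $\tau=\rho$ parametrizes one of the distinguished triples of Moutard coefficients, and by the equi-twist criterion \eqref{eq:et_wcube_condition} together with $a^{ij}=\rho_i\rho_j/(\rho\,\rho_{ij})$ the two constituents are genuine \emph{hyperbolic} (rather than merely pre-hyperbolic) nets precisely when $\rho>0$ on both layers. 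Thus it suffices to parametrize the positive solutions of \eqref{eq:bkp_Weingarten_12}, the remaining freedom in realizing the supporting A-net in $\R^3$ being the one-dimensional Lelieuvre data $n(\mathcal{S}^1),n(\mathcal{S}^2)$ and the finite data $x_0,n(0,0,1)$.

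First I would fix the Cauchy problem for the two-layer equation. Setting $u=\rho(\cdot,\cdot,0)$ and $v=\rho(\cdot,\cdot,1)$, equation \eqref{eq:bkp_Weingarten_12} over a single vertical cube reads
\begin{equation*}
u\,v_{12}-u_1v_2-u_2v_1+u_{12}\,v=0,
\end{equation*}
which is linear and homogeneous in the top layer $v$. Prescribing the whole bottom layer $u$ (a function of the two variables $z_1,z_2$) together with the values of $v$ along the two coordinate axes (one-dimensional data) determines $v$ everywhere through $v_{12}=(u_1v_2+u_2v_1-u_{12}v)/u$. The bottom layer $u=\rho(\cdot,\cdot,0)$ is exactly what \emph{encodes one of the two hyperbolic nets}: it describes that net's crosses and, through $a^{21}=u_1u_2/(u\,u_{12})$, fixes the Moutard coefficients of its supporting A-surface, hence the surface up to the one-dimensional Lelieuvre normals and the base point.

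The decisive step is the control of positivity, which I would handle by the substitution $v=u\,w$. A direct computation collapses the recursion into the scalar (flipped) Moutard equation
\begin{equation*}
w_{12}+w=a^{21}\,(w_1+w_2),\qquad a^{21}=\frac{u_1u_2}{u\,u_{12}}>0,
\end{equation*}
that is, precisely the discrete Moutard equation \eqref{eq:lel_moutard_flipped} of the prescribed net, now carrying its own positive coefficient. Since $u>0$, the second net is hyperbolic if and only if $w>0$, so the whole problem reduces to describing the strictly positive solutions of a single scalar Moutard equation with positive coefficient, parametrized by their positive one-dimensional axis data $w(z_1,0),w(0,z_2)$. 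I expect this positivity to be the main obstacle: the recursion $w_{12}=a^{21}(w_1+w_2)-w$ can render $w$ negative, and a globally positive solution exists only when the subtracted term cannot overwhelm $a^{21}(w_1+w_2)$. This is exactly the role of the hypothesis that the generating function of two variables be \emph{locally bounded below}; it keeps the coefficient $a^{21}$ away from the degenerate regime, so that the positive cone of solutions is non-empty and is reached by suitably chosen (sufficiently fast-growing) positive axis data.

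Finally I would assemble both directions of the parametrization. Any Weingarten pair of hyperbolic nets restricts, after the normalization $\tau=\rho$ of Theorem~\ref{thm:rho_gleich_tau}, to a locally-bounded-below positive two-variable function $u$ encoding one net together with the positive one-dimensional data $w(z_1,0),w(0,z_2)$ and the Lelieuvre and point data; conversely, the three steps above reconstruct from such data a unique positive solution of \eqref{eq:bkp_Weingarten_12}, and hence a unique Weingarten pair, the only non-trivial verification being the maintenance of positivity treated in the preceding paragraph.
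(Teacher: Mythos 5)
Your route is essentially the paper's own proof in different clothing. Your substitution $v=u\,w$ into the two-layer dBKP equation produces $w_{12}+w=a^{21}(w_1+w_2)$ with $a^{21}=u_1u_2/(u\,u_{12})$, and this $w$ is exactly the potential $\Phi$ the paper introduces via $a^{23}=\Phi/\Phi_2$, $a^{31}=\Phi/\Phi_1$: the paper's evolution equation \eqref{eq:evolution_phi_3d}$_1$, $\Phi_{12}=a(\Phi_1+\Phi_2)-\Phi$, is the same scalar discrete Moutard equation, and your one-dimensional axis data for $w$ correspond to the paper's Cauchy data $\Phi(\{0\}\times\Z)$, $\Phi(\Z\times\{0\})$. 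The identification of the two-variable function with the bottom layer (you use $u$, the paper uses the Moutard coefficient $a=a^{21}$, which differ only by the 1D axis data of $u$) and the reduction of ``hyperbolic, not merely pre-hyperbolic'' to strict positivity are likewise the same.

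The one place where your argument stops short of a proof is the decisive positivity step, which you explicitly flag as an expectation rather than establish. The paper resolves it with a concrete local inequality: positivity of $\Phi$ (your $w$) propagates under the recursion precisely when, at each step, the coefficient satisfies $a>\Phi/(\Phi_1+\Phi_2)$ --- this is what ``locally bounded below'' means in the statement, a pointwise lower bound on the two-variable function in terms of the already-computed values of $\Phi$; positivity of $a_3$ then follows for free from $a_3=\frac{\Phi_1\Phi_2}{\Phi\,\Phi_{12}}a$. Your alternative suggestion, that positivity can instead be reached by ``sufficiently fast-growing'' one-dimensional axis data for $w$, misplaces the constraint: for fixed $a$ the interior values of $w$ are forced by the recursion, and if $a$ dips below the threshold somewhere the subtraction of $w$ produces a sign change no matter how the axis data were chosen. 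So the constraint genuinely lives on the two-variable function, as the proposition asserts, and your write-up needs the explicit inequality \eqref{eq:positive_phi_12} (or its equivalent in terms of $u$ and $w$) to close the argument.
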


\begin{proof}
According to Theorem~\ref{thm:bkp_geometric_positive}, the analysis of
Weingarten pairs of hyperbolic nets is equivalent to the analysis of A-nets $x:
\Z^2 \times \left\{ 0,1 \right\} \to \R^3$ that are equi-twisted with respect
to the two layers denoted by $x^{(0)}=x(\cdot,0)$ and $x^{(1)} = x(\cdot,1)$.
A necessary and sufficient condition for $x$ being
equi-twisted with respect to $x^{(0)},x^{(1)}$ is that for
every elementary hexahedron the corresponding Moutard coefficients satisfy
\eqref{eq:et_wcube_condition}. Therefore, it is sufficient to confine
ourselves to the consideration of Moutard coefficients and related Cauchy
problems. If we think of $x^{(0)}$ as a given equi-twisted layer then
it is convenient to regard the relevant Moutard coefficients as functions of
the ``horizontal'' variables $z_1$ and $z_2$, that is,
\begin{equation}
  a^{23},a^{31},a,a_3:\Z^2\to\R^3,
\end{equation}
where $a := a^{21}$. Hence, since $x^{(0)}$ is equi-twisted, we may assume without 
loss of generality that $a > 0$. It is therefore required to show that
\eqref{eq:et_wcube_condition} holds for all A-cubes ``between'' 
the two layers $x^{(0)}$ and $x^{(1)}$, that is,
\begin{equation}
a_3, a^{23}, a^{31} > 0.
\label{eq:POS}
\end{equation}
The latter Moutard coefficients are determined by the evolution equations 
\eqref{eq:ste_123} with Cauchy data consisting of coefficients $a$ for quadrilaterals 
of the $(1,2)$-plane and $a^{23},a^{31}$ for the ``vertical'' quadrilaterals over the
coordinate axes of the $(1,2)$-plane, i.e.,
\begin{equation}
a(\mathcal{S}^{12}),
\quad a^{23}(\left\{ 0 \right\} \times \Z),
\quad a^{31}(\Z \times \{0\}).
\label{eq:cauchy_data_moutard_coeffs_3d}
\end{equation}
As a necessary condition, the above Cauchy data have to be chosen positive.

In order to proceed, we now cast the Cauchy problem into a form which reflects 
the privileged role of the $(1,2)$-coordinate planes. Thus, the relation 
\begin{equation*}
\frac{a^{23}}{a^{23}_1} =
\frac{a^{31}}{a^{31}_2}, 
\label{eq:moutard_constant_ratio_3d}
\end{equation*}
which is a consequence of \eqref{eq:ste_123}, guarantees the existence of a 
potential $\Phi$ such that
\begin{equation}
a^{23} = \frac{\Phi}{\Phi_2}, \quad a^{31} = \frac{\Phi}{\Phi_1}.
\label{eq:potential_phi_3d}
\end{equation}
With respect to this potential, the evolution equations  \eqref{eq:ste_123} may then be written as
\begin{equation}
\Phi_{12} = a(\Phi_1 + \Phi_2) - \Phi,\quad a_3 = \frac{\Phi_1 \Phi_2}{\Phi \Phi_{12}} a.
\label{eq:evolution_phi_3d}
\end{equation}
Accordingly, in terms of $\Phi$, the Cauchy data 
\eqref{eq:cauchy_data_moutard_coeffs_3d} translate into the Cauchy data
\begin{equation*}
a(\mathcal{S}^{12}),
\quad \Phi(\left\{ 0 \right\} \times \Z),
\quad \Phi(\Z \times \left\{ 0 \right\}).
\label{eq:cauchy_data_phi_3d}
\end{equation*}
The parametrization \eqref{eq:potential_phi_3d} now shows that the positivity 
of the Moutard coefficients $a^{23}$ and $a^{31}$ leads to the key condition
\begin{equation*}
\Phi>0.
\end{equation*}
The latter is satisfied if the Cauchy data $\Phi(\left\{ 0 \right\} \times \Z)$ and 
$\Phi(\Z \times \left\{ 0 \right\})$ are positive and, at each step of the iteration, 
the Moutard coefficient $a$ in the four-point equation 
\eqref{eq:evolution_phi_3d}$_1$ is chosen in such a way that
\begin{equation}
a > \frac{\Phi}{\Phi_1 + \Phi_2}.
\label{eq:positive_phi_12}
\end{equation}
Finally, relation \eqref{eq:evolution_phi_3d}$_2$ shows that the positivity of 
$\Phi$ implies the positivity of the remaining coefficient $a_3$.
\end{proof}

\begin{remark}
In broad terms, if, in the context of the above proof, we regard the A-surface
$x^{(0)}$ as a discretization of a continuous 
surface then the condition \eqref{eq:positive_phi_12} imposes a constraint on 
the ``quality'' of the discretization of any particular surface rather than the 
surface itself. More precisely, in the context of the continuum limit alluded to 
in Section \ref{subsec:a-surfaces}, the functions $a$ and $\Phi$ admit the 
expansions
\begin{equation*}
 a = 1 + \frac{1}{2}\epsilon_1\epsilon_2A+\cdots,\quad 
\Phi_1 = \Phi + \epsilon_1\partial_1\Phi+\cdots,\quad 
\Phi_2 = \Phi + \epsilon_2\partial_2\Phi+\cdots
\end{equation*}
and the discrete Moutard equation \eqref{eq:evolution_phi_3d}$_1$ formally 
reduces to the classical Moutard equation
\begin{equation*}
\partial_1\partial_2\Phi = A\Phi
\end{equation*}
in the limit $\epsilon_i\to0$. The inequality \eqref{eq:positive_phi_12} then 
adopts the form
\begin{equation*}
1 + \epsilon_1\frac{\partial_1\Phi}{\Phi} + \epsilon_2\frac{\partial_2\Phi}{\Phi} 
+ \epsilon_1\epsilon_2A + \cdots > 0
\end{equation*}
which may be met for sufficiently ``small'' discretization parameters 
$\epsilon_i$ subject to appropriate boundedness assumptions on the 
functions $A$ and $\Phi$.
\end{remark}

\section{Perspectives}

The preceding analysis has revealed that Weingarten transformations for
hyperbolic nets are algebraically encoded in positive solutions of the discrete BKP
equation  \eqref{eq:bkp_Weingarten_12}. A single application of a Weingarten transformation
corresponds to a positive solution $\tau: \Z^2\times \{0,1\}\to\R$ of the discrete BKP
equation and the existence of such solutions has been proven in Proposition  
\ref{prop:existence_positive}. Iterated Weingarten transformations for hyperbolic nets
correspond to positive solutions $\tau$ of the discrete BKP equation which are defined on 
larger domains. In which sense these exist in both geometric and algebraic terms is currently
being investigated. In this context, it is also natural to examine the permutability theorems
associated with Weingarten transformations. Furthermore, it is necessary to inquire as to whether 
the positivity of $\tau$ may be preserved by the standard B\"acklund transformation for 
the discrete BKP equation. In geometric terms, this is closely related to the consideration of four- or 
higher-dimensional dimensional hyperbolic nets. In this connection, the application of Weingarten transformations to special discrete surfaces such as the discrete K-surfaces alluded to in the 
Introduction should be pursued. All these items will be addressed in a separate 
publication.

\ommit{
\input{appendix}
}

\bibliographystyle{amsalpha}
\bibliography{wtrafos}

\end{document}